\numberwithin{equation}{section}
\newtheorem{dfn}{Definition}[section]
\newtheorem{thm}[dfn]{Theorem}
\newtheorem{lma}[dfn]{Lemma}
\newtheorem{prop}[dfn]{Proposition}
\newtheorem{crlre}[dfn]{Corollary}
\newtheorem{Not}[dfn]{Notations}
\DeclarePairedDelimiterX{\norm}[1]{\lVert}{\rVert}{#1}
\DeclarePairedDelimiterX{\bnorm}[1]{\big\lVert}{\big\rVert}{#1}
\DeclarePairedDelimiterX{\Bnorm}[1]{\Big\lVert}{\Big\rVert}{#1}
\newcommand{\R}{\mathbb{R}}
\newcommand{\Z}{\mathbb{Z}}
\newcommand{\N}{\mathbb{N}}
\newcommand{\Nat}{\mathbb{N}}
\newcommand{\D}{\mathbb{D}}
\newcommand{\hil}{\mathcal{H}}
\newcommand{\boh}{\mathcal{B}_1(\hil)}
\newcommand{\cir}{\mathbb{T}}
\newcommand{\T}{\mathbb{T}}
\newcommand{\la}{\langle}
\newcommand{\ra}{\rangle}
\newcommand{\bnh}{\mathcal{B}_n(\hil)}
\newcommand{\Tr}{\operatorname{Tr}}
\newcommand{\mult}{\textit{Mult}}
\newcommand{\lin}{\textit{Lin}}
\newcommand{\Fcal}{\mathcal{F}}
\newcommand{\Fpt}[1]{\mathfrak{F}_{#1}(\cir)}
\newcommand{\pt}[1]{\mathfrak{F}_{#1}}
\newcommand{\tFpt}[1]{\widetilde{\mathfrak{F}}_{#1}(\cir)}
\newcommand{\FptR}[1]{\mathfrak{F}_{#1}(\R)}
\def\Im{{\mathrm{Im}\,}}
\def\supp{{\mathrm{supp}\,}}
\newcommand{\dm}{\tau}
\begin{document}
	
	\title{ Higher-Order Trace Formulas for Contractive and Dissipative Operators}
	
	\author[Chattopadhyay] {Arup Chattopadhyay}
	\address{Department of Mathematics, Indian Institute of Technology Guwahati, Guwahati, 781039, India}
	\email{arupchatt@iitg.ac.in, 2003arupchattopadhyay@gmail.com}
	
	\author[Pradhan]{Chandan Pradhan}
	\address{ Department of Mathematics, Indian Institute of Science, Bangalore, 560012, India}
	\email{chandan.pradhan2108@gmail.com, chandanp@iisc.ac.in}
	
	\author[Skripka] {Anna Skripka}
	\address{Department of Mathematics and Statistics, University of New Mexico, 311 Terrace Street NE, Albuquerque, NM 87106, USA}
	\email{skripka@math.unm.edu}

	\subjclass[2010]{47A55}
	
	\keywords{Spectral shift function; higher order trace formula; Schatten-von Neumann perturbation; contractive and dissipative operator;  multilinear operator integral}
	\begin{abstract}
		We establish higher order trace formulas for pairs of contractions along a multiplicative path generated by a self-adjoint operator in a Schatten-von Neumann ideal, removing earlier stringent restrictions on the kernel and defect operator of the contractions and significantly enlarging the set of admissible functions. We also derive higher order trace formulas for maximal dissipative operators under relaxed assumptions and new simplified trace formulas for unitary and resolvent comparable self-adjoint operators. The respective spectral shift measures are absolutely continuous and, in the case of contractions, the set of admissible functions for the $n$th order trace formula on the unit circle includes the Besov class $B^n_{\infty, 1}(\T)$. Both aforementioned properties are new in the mentioned generality.
	\end{abstract}
	\maketitle
	
	\section{Introduction}
	
	Trace formulas for perturbed operator functions in terms of a spectral shift have a long history in operator theory and related fields. The concept originated from physics research summarized in \cite{Lif} and then developed to mathematical theory in the seminal works \cite{Kr53, Kr62}.  Initial trace formulas were derived for trace class perturbations $V$ of self-adjoint or unitary operators $H_0$ and allowed to efficiently compute the perturbed operator function $f(H_0+V)$ in terms of the initial data, namely,
	\begin{align}
		\label{babytf}
		\Tr(f(H_0+V)-f(H_0))=\int f'(\lambda)\xi_{H_0,V}\,d\lambda.
	\end{align}
	The first order spectral shift function $\xi_{H_0,V}$ controls the noncommutativity of operators $H_0$ and $V$ and is independent of the scalar function $f$. It is also closely related to important objects of perturbation theory, mathematical physics and noncommutative geometry including scattering phase, spectral flow, spectral action (see, e.g., \cite{BP,Skbook}). Those remarkable connections inspired  search for generalizations of the spectral shift and extensions of the trace formulas to models involving non-trace class perturbations and/or  nonnormal operators.
	
	An extension of the result of \cite{Kr53} to Schatten-von Neumann perturbations turned out to be a highly nontrivial task. In the self-adjoint case, the natural replacement of the left-hand side of \eqref{babytf} with an operator analog of a Taylor polynomial was successfully handled in \cite{Ko} for Hilbert-Schmidt perturbations, but the general case required development of a more subtle noncommutative analysis due to the intricate structure of Taylor remainders and was finally resolved in \cite{PoSkSu13In}.
	Subsequent generalizations of trace formulas to Dirac and Schrödinger operators (see, e.g., \cite{CNP,NS_2022,PoSkSu15}) had to overcome an extra challenge of noncompact perturbations while inclusions of unbounded dissipative 
	and contractive operators had to circumvent limitations of the spectral theory of nonnormal operators. As an outcome of a many-decade investigation and further advancement of noncommutative analysis, we have trace formulas for non-trace class perturbations of self-adjoint and unitary operators as well as trace class perturbations of contractive and maximal dissipative operators (see, e.g., \cite{Skbook} and references cited therein). The objective of this paper is to establish trace formulas in the missing cases of non-trace class Schatten-von Neumann perturbations of contractive and  resolvent comparable maximal dissipative operators.
	
	We note that extensions to noncompact perturbations progressed concurrently with trace formulas for unitary operators due to an intrinsic connection between the two cases which was noted in \cite{Kr62} and further developed in \cite{NH,PoSkSu15,PoSkSujfa2016,Skadv2017}.
	Trace formulas for pairs of unitaries with the trace class difference were derived in \cite{Kr62} and for unitaries with the Hilbert-Schmidt difference in \cite{NH,GePu}. The approach of \cite{NH} involved differentiation along a multiplicative path of unitaries while the approach of \cite{GePu} was based on differentiation along the linear path of contractions joining a pair of unitaries, which reduced the class of admissible functions. Higher order trace formulas were obtained in \cite{PoSkSujfa2016} for a pair of unitaries $U_0$ and $U_1=e^{iA}U_0$, with $A=A^*$ an element of the $n$th Schatten-von Neumann ideal, correcting analogous formulas previously obtained in \cite{Pe06}. The sets of functions satisfying the respective trace formulas were substantially enlarged in \cite{Skadv2017} to encompass all functions with the $n$th derivatives in the Wiener class except low degree polynomials. The aforementioned result of \cite{Skadv2017} is a starting point for the main results of this paper. 
	
	Let $\hil$ denote a separable infinite-dimensional Hilbert space, $\mathcal{B}(\hil)$ the algebra of bounded linear operators on $\hil$, $\mathcal{B}_n(\hil)$ the $n$th Schatten-von Neumann ideal of compact operators on $\hil$ (see, e.g., \cite{GK,Sim} for a detailed discussion of their properties), and $\text{Tr}$ the canonical trace on the trace class ideal $\mathcal{B}_1(\hil)$. As usually, $\mathbb{N}$, $\mathbb{Z}$, $\mathbb{R}$, and $\mathbb{C}$ represent the sets of natural, integer, real, and complex numbers, respectively; $\mathbb{D}$ stands for the open unit disk and $\mathbb{T}$ for the unit circle in $\mathbb{C}$.  Positive constants are denoted by $c, d, \tilde{d}$ with subscripts indicating their dependencies; for example, $c_k$ depends only on $k$. Let $C(\mathbb{T})$ denote the Banach space of all continuous functions on $\mathbb{T}$ equipped with the supremum norm, $C^n(\mathbb{T})$ the space of all $n$-times continuously differentiable functions on $\mathbb{T}$, and $D^n(\mathbb{T})$ the space of all $n$-times differentiable functions on $\mathbb{T}$. Let $\dm$ denote the normalized arc length measure on $\cir$. Let $\hat{f}(k)$ denote the $k$th Fourier coefficient of $f\in C(\mathbb{T})$, that is,
	\[\hat{f}(k)=\frac{1}{2\pi i}\int_\cir f(z)\bar{z}^{k+1}dz,\quad k\in\mathbb{Z},\]
	and set
	\begin{equation}\label{FnT}
	\Fcal_n(\mathbb{T}):=\Big\{f(z)=\sum_{k=-\infty}^{\infty}\hat{f}(k)z^k\in C^n(\mathbb{T}):\;\sum_{k=-\infty}^{\infty}|k|^n|\hat{f}(k)|<\infty  \Big\}.
	\end{equation}
	The following result of \cite[Theorem 4.4]{Skadv2017} provides a generic $n$th order trace formula for pairs of unitary operators in terms of the spectral shift function $\eta_n$.
	When a perturbation is not compact, operators are not normal, or a class of admissible functions is extended, modifications of the approximating expressions and respective remainders can become necessary (see, e.g., \cite{NH,Skadv2017} and results of this paper).

	\begin{thm}
		\label{skripkaunitaryadv}
		Let $n\in\mathbb{N},~n\geq 2$. Let $U_0$ be a unitary operator, $A=A^*\in\bnh$ and denote $U_s=e^{isA}U_0,~s\in [0,1]$. Then,
		there exists a constant $c_n$ and a function  $\eta_n=\eta_{n,U_0,A}\in L^1(\cir,\dm)$  satisfying $$\|\eta_n\|_1\leq c_n\|A\|_n^n$$ such that
		\begin{align}
			\label{trfor}
			\textup{Tr}\left(f(U_1)-f(U_0)-\sum_{k=1}^{n-1}\dfrac{1}{k!}\dfrac{d^k}{ds^k}\Big|_{s=0}f(U_s) \right)=\int_{\cir}f^{(n)}(z)\eta_n(z)dz
		\end{align}
		for every $f\in\Fcal_n(\mathbb{T})$ with $\hat{f}(k)=0$ for $k=1,\dots,n-1$.
	\end{thm}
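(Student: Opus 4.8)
\emph{Overview and Step 1 (the remainder is trace class).} The plan is to realize the operator $R_n(f):=f(U_1)-f(U_0)-\sum_{k=1}^{n-1}\frac1{k!}\at{\frac{d^k}{ds^k}f(U_s)}{s=0}$ sitting under the trace in \eqref{trfor} as a trace-class operator depending continuously and linearly on $f$, to compute its trace on the monomials $z^m$ in order to extract the density $\eta_n$, and then to pass to all admissible $f$ by continuity in the norm $\|f\|_{\Fcal_n}:=\sum_{k\in\Z}|k|^n|\hat f(k)|$. Writing $B=iA$, so that $U_s=e^{sB}U_0$ and $\frac{d}{ds}U_s=BU_s$, Taylor's formula along the path gives $R_n(f)=\frac1{(n-1)!}\int_0^1(1-s)^{n-1}\frac{d^n}{ds^n}f(U_s)\,ds$. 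The divided difference calculus for operator functions along this multiplicative path expresses $R_n(f)$ as a finite linear combination of multiple operator integrals, each with integrand a divided difference $f^{[j]}$ $(j\ge1)$ and carrying a total of at least $n$ factors of $A$ among its arguments (up to left/right multiplication by powers of $U_0$); the several summands, and the cancellations that leave only $\ge n$ copies of $A$, are dictated by the noncommutativity of $A$ with $U_s$ and account for much of the bookkeeping. For $f\in\Fcal_n(\T)$ these multiple operator integrals are bounded on the relevant Schatten classes with norms $\le c_n\|f\|_{\Fcal_n}$, and Hölder's inequality for Schatten norms — the product of $n$ operators from $\bnh$ lies in $\boh$ — yields $R_n(f)\in\boh$ with $\|R_n(f)\|_1\le c_n\,\|f\|_{\Fcal_n}\,\|A\|_n^n$.

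\emph{Step 2 (construction of $\eta_n$).} By Step 1, $f\mapsto\Tr(R_n(f))$ is a bounded linear functional on $\Fcal_n(\T)$. I would evaluate it on $f=z^m$. For $1\le m\le n-1$ the right-hand side of \eqref{trfor} vanishes since $(z^m)^{(n)}\equiv0$, whereas $\Tr(R_n(z^m))$ is generically nonzero (already $\Tr\big((e^{iA}-I-iA)U_0\big)\ne0$ for $n=2$, $m=1$) — this is exactly why these monomials must be barred from the admissible class; for $m=0$ both sides vanish. For $m\ge n$ one inserts the explicit expansion of $\at{\frac{d^k}{ds^k}U_s^m}{s=0}$ into words $U_0^{i_0}AU_0^{i_1}\cdots AU_0^{i_k}$, uses cyclicity of the trace to close up each word, and invokes the trace identity for multiple operator integrals (which represents the trace as an integral of the symbol against a scalar measure built from $U_0$ and $A$); since for $f=z^m$ the symbol $f^{[n]}$ is the complete homogeneous symmetric polynomial $h_{m-n}(z_0,\dots,z_n)$ of degree $m-n$, in which $m$ enters only through the degree, these contributions assemble into $\int_\cir(z^m)^{(n)}(z)\,\eta_n(z)\,dz$ with one and the same $\eta_n\in L^1(\cir,\dm)$, built only from $U_0$ and $A$ and satisfying $\|\eta_n\|_1\le c_n\|A\|_n^n$; the case $m\le-1$ is handled the same way with $f^{[n]}$ replaced by its Laurent analogue.

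\emph{Step 3 (passage to all admissible $f$).} Let $f\in\Fcal_n(\T)$ with $\hat f(k)=0$ for $k=1,\dots,n-1$, and set $f_N=\sum_{|k|\le N}\hat f(k)z^k$. Then $\|f-f_N\|_{\Fcal_n}\to0$, so Step 1 gives $R_n(f_N)\to R_n(f)$ in $\boh$ and hence $\Tr(R_n(f_N))\to\Tr(R_n(f))$; at the same time $f_N^{(n)}\to f^{(n)}$ uniformly on $\cir$, so $\int_\cir f_N^{(n)}\eta_n\,dz\to\int_\cir f^{(n)}\eta_n\,dz$ because $\eta_n\in L^1(\cir,\dm)$. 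Every monomial occurring in $f_N$ has index $\le0$ or $\ge n$, so Step 2 applies to each and, by linearity, $\Tr(R_n(f_N))=\int_\cir f_N^{(n)}\eta_n\,dz$; letting $N\to\infty$ yields \eqref{trfor}.

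\emph{Main obstacle.} The trace-class property and the crude bound in Step 1 are comparatively soft consequences of multiple operator integral estimates. The decisive difficulty lies in the middle of Step 2: establishing that the object $\eta_n$ extracted from the monomial computation is an honest $L^1$ function — equivalently, that the associated spectral shift measure is absolutely continuous with respect to $\dm$ — with the stated bound $c_n\|A\|_n^n$, and that it genuinely does not depend on $m$. Absolute continuity forces one to exploit the multiplicative structure of the path $s\mapsto e^{isA}U_0$ in an essential way, either through a delicate analysis of the word sums $U_0^{i_0}AU_0^{i_1}\cdots AU_0^{i_n}$ relative to the spectral measure of $U_0$, or by transporting, via a Cayley transform, the analogous known statement for resolvent comparable self-adjoint operators together with the absolute continuity of Krein's first-order spectral shift function. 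I expect this absolute-continuity step, together with the combinatorial bookkeeping of the noncommutative Taylor remainder, to be the crux.
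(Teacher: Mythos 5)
There is a genuine gap, and it sits exactly where you flag it. First, note that the paper does not prove this theorem at all: it is quoted from \cite[Theorem 4.4]{Skadv2017}, and the mechanism of that proof is reproduced in this paper only in the proof of Theorem \ref{linunitarytraceformula}. Your Steps 1 and 3 are the soft part and are essentially fine (modulo the measurability/interchange of $\Tr$ with the $\int_0^1$ integral, handled here by Lemma \ref{lem:moi_rep}). The problem is Step 2, which asserts rather than proves the theorem's actual content. Two concrete points. (a) Your only quantitative input is the Wiener-type bound $\|R_n(f)\|_1\le c_n\|f\|_{\Fcal_n}\|A\|_n^n$, obtained by expanding divided differences through Fourier coefficients and using H\"older. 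That bound is too weak to produce any $\eta_n$ (even a finite measure) with $\|\eta_n\|_1\le c_n\|A\|_n^n$ paired against $f^{(n)}$: for that one needs the much deeper estimate
\begin{align*}
\left|\Tr\left(T_{f^{[n]}}^{U,\ldots,U}(V_1,\ldots,V_n)\right)\right|\le c_n\|f^{(n)}\|_\infty\prod_{j=1}^n\|V_j\|_n,
\end{align*}
i.e.\ control by $\|f^{(n)}\|_\infty$ alone (the estimate \eqref{useofcyclicity2} of Corollary \ref{useofcyclicity}, due to \cite{PoSkSujfa2016} and ultimately resting on \cite{PoSkSu13In}); this cannot be extracted from the Fourier-expansion argument you describe. (b) Even granting that estimate, Riesz representation only yields a finite complex measure $\mu_n$ with $\|\mu_n\|\le c_n\|A\|_n^n$ such that $\Tr(R_n(f))=\int_\T f^{(n)}\,d\mu_n$. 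The existence of an $L^1$ density — equivalently the absolute continuity you defer to the ``main obstacle'' — is the theorem, and your Step 2 does not supply it: the claim that the monomial computations ``assemble into one and the same $\eta_n\in L^1$'' is precisely what must be shown (the $m$-independence is the existence of a single spectral shift function, not a consequence of $({z^m})^{[n]}$ being a complete homogeneous polynomial). Neither of the two strategies you sketch (word sums against the spectral measure of $U_0$, or Cayley transport from the self-adjoint case) is carried out, and neither is how \cite{Skadv2017} proceeds: there one approximates $A$ in $\|\cdot\|_n$ by trace-class self-adjoint operators $A_m$, for which an $L^1$ density is available from the trace-class/lower-order theory, and then uses the uniform $\|f^{(n)}\|_\infty$-estimate to show that the classes $[\eta_{n,m}]$ are Cauchy in the quotient $L^1(\T)/\mathcal{P}_n$ (\cite[Lemma 4.3]{Skadv2017}), extracting the limit density; the same device is used verbatim in this paper's proof of Theorem \ref{linunitarytraceformula}. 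So your proposal has the correct skeleton (integral Taylor remainder, multilinear operator integrals, density of trigonometric polynomials, continuity passage), but the central analytic ingredients — the $\|f^{(n)}\|_\infty$ trace estimate and the approximation-plus-quotient-space argument giving absolute continuity — are missing.
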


We note that the condition ``$\hat{f}(k)=0$ for $k = 1,\ldots, n-1$" was mistakenly not included in the statement of \cite[Theorem 4.4]{Skadv2017}.
The result of \cite[Theorem 4.4]{Skadv2017} is extended to all functions $f\in\Fcal_n(\mathbb{T})$ in Theorem~\ref{extuniatry} of this paper by modifying the respective trace formulas and by handling the low-degree polynomials differently.
The necessity of this new approach to the low-degree polynomials is explained in the proof of Theorem~\ref{extuniatry}.
We also note that the restriction ``$\hat{f}(k)=0$ for $k = 1,\ldots, n-1$" should be incorporated into the statement of \cite[Theorem~5.3]{Skadv2017} since that theorem relies on \cite[Theorem 4.4]{Skadv2017}. This restriction is no longer required in Theorem \ref{Trace formula: self-adjoint multiplicative path}, where it is eliminated using Theorem~\ref{extuniatry}.
The condition ``$\hat{f}(k)=0$ for $k = 1,\ldots, n-1$" should also be assumed in the statement of \cite[Theorem~3.2]{Ch_Pr_NYJM}, as it depends on \cite[Theorem 4.4]{Skadv2017}, and incorporated into the statements of \cite[Theorems~4.1 and~5.2]{Ch_Pr_NYJM}.


	
	The study of the first order trace formula and the associated spectral shift function on $\cir$ for a pair of contractions $T_0,T_1$ with the trace class difference was initiated in  \cite{Lan,Neid_1988} and further developed in a series of papers (see, e.g., the list of references in \cite[Section 5.5.2]{Skbook}).
	In all those attempts, additional assumptions on the associated defect operators and/or on $T_0$ were imposed. Those restrictions were ultimately removed in \cite{MNP19}.
	Higher order trace formulas \eqref{trfor} for contractions with difference in $\mathcal{B}_n(\hil)$ were established in \cite{PoSu} for $n=2$ and in \cite{PoSkSu} for $n\ge 3$, where the set of admissible functions $f$ was constrained to polynomials. The set of admissible functions was enlarged in \cite{Mor, Ch_Pr_NYJM} at a price of imposing stringent assumptions on the kernel and defect operator of the contractions (as recalled in Section \ref{trmultpath}).
	
	In Theorem \ref{extcontracmulti} of this paper we significantly relax the assumptions on operators made in \cite{Mor, Ch_Pr_NYJM} and enlarge the set of admissible functions obtained in \cite[Theorem 4.4]{Skadv2017} (see Theorem \ref{skripkaunitaryadv}) and \cite[Theorem 4.1]{Ch_Pr_NYJM}. More precisely, we establish a modification of \eqref{trfor} for contractions $U_0,U_1$ satisfying $U_1=e^{iA}U_0$ with $A=A^*\in\bnh$ for every $f$ in the set $\Fpt{n}$ (see Section \ref{prel}) containing the Besov class $B^n_{\infty, 1}(\T)$.
	Moreover, all the spectral shift measures in Theorem~\ref{extcontracmulti} are absolutely continuous and the densities of those not appearing in \eqref{trfor} due to the assumption $\hat{f}(k)=0$ for $k=1,\dots,n-1$ are trigonometric polynomials of degree at most $n-1$. The latter result (in fact, its special case established in Theorem \ref{extuniatry}) also greatly refines the trace formula of \cite[Remark 4.7(ii)]{PoSkSujfa2016} for unitaries, where the absolute continuity was not confirmed for some of the measures and the admissible class of functions was restricted to those $f$ in $\Fcal_n(\mathbb{T})$ for which $\hat{f}(k)=0$, $k<0$.
	
	The generalization of the first order trace formulas to pairs of maximal dissipative operators was initiated in \cite{AdPav} and subsequently investigated by several authors (see, e.g., the list of references in \cite[page 164]{Skbook}).
	In full generality the first order trace formula for pairs of maximal dissipative operators with the trace class resolvent difference was established in \cite{MNP19}. Second-order trace formulas for pairs of maximal dissipative operators with some stringent assumptions (as recalled in Section \ref{resolventcomtr}) were obtained in \cite{Mor} and analogous results in the higher order setting  were obtained in \cite{Ch_Pr_NYJM} for $\psi(\lambda)=f\left(\frac{\lambda+i}{\lambda-i}\right)$ such that $ f\in\Fcal_n(\cir)$ with $\hat{f}(k)=0$ for $k=1,\dots,n-1$.
	
	In Theorem \ref{Trace formula: self-adjoint multiplicative path} we significantly relax the assumptions on the maximal dissipative operators made in \cite{Mor, Ch_Pr_NYJM}
	and enlarge the set of admissible functions to
	\begin{align*}
		\FptR{n}:=&\left\{\psi(\lambda)=f\left(\frac{\lambda+i}{\lambda-i}\right):\; f\in \Fpt{n}\right\},
	\end{align*}
	which contains all rational functions bounded on $\R$ (see, e.g., \cite[Section 5]{Skadv2017}). The result of Theorem~\ref{Trace formula: self-adjoint multiplicative path} also extends the trace formula of \cite[Theorem 5.2]{Ch_Pr_NYJM} for maximal dissipative operators and the trace formula of \cite[Theorem 5.3]{Skadv2017} for self-adjoint operators $H_0$ and $H_1$ satisfying $(H_1-iI)^{-1}-(H_0-iI)^{-1}\in\bnh$ to include $$\text{span}\Big\{\R\ni\lambda\mapsto\frac{1}{a-\lambda}: \operatorname{Im}(a)>0\Big\}$$
and $\psi\in\FptR{n}$ arising from $f\in\Fpt{n}\setminus\Fcal_n(\cir)$ into the set of admissible functions.
	
	A variant of the first order trace formula for contractions with the integration going over $\mathbb{D}$ was obtained in \cite{AC_KBS_JOT}.
	The right-hand side of the latter formula resembles the one of the Helton-Howe formula (see, e.g., \cite{HH,CS}). In Theorem \ref{Helt1} we obtain a higher order analog of the Helton-Howe type trace formula \cite{AC_KBS_JOT}.
	
	In Theorem~\ref{linunitarytraceformula} we significantly simplify the higher order trace formula \eqref{trfor} for unitary operators by replacing the left-hand side with an alternative approximation remainder and in Theorem~\ref{Trace formula: self-adjoint linear path} we simplify the trace formula of \cite[Theorem 5.3]{Skadv2017} for resolvent-comparable self-adjoint operators. The new $n$th order trace formula for unitaries holds for $f\in\Fpt{n}$ and the new $n$th order trace formula for self-adjoints holds for $f\in\FptR{n}$, considerably enlarging the admissible function classes attained in the analogous results of \cite[Theorem 2.6 and Theorem 3.5]{PoSkSu15}, respectively.
	
	Our major tools include multilinear operator integration and Sch\"affer's unitary matrix dilation of contractions. In particular, we utilize norm bounds, perturbation formulas and change of variables techniques for multilinear operator integrals as well as dilate operator Taylor remainders from the case of contractions to the case of unitaries. The aforementioned methods are synthesized along a carefully selected path between the initial and perturbed operators to remove prior restrictive assumptions.
	
	The paper is organized as follows: preliminaries on multilinear operator integration  and Sch\"affer's unitary dilation are collected in Section \ref{prel}, higher order trace formulas for contractions are established in Section \ref{trmultpath}, higher order trace formulas for maximal dissipative operators are derived in Section \ref{resolventcomtr} and simplified higher order trace formulas for unitaries and resolvent comparable self-adjoint operators are derived in Section \ref{simpletrformula}.

	\section{Preliminaries}\label{prel}
	
	In this section we recall necessary facts on multilinear operator integration   and Sch\"affer's unitary dilation technique.

The symbols of multiple operator integrals utilized in this paper are constructed from divided differences. We recall that
the zeroth-order divided difference of a function $f$ is simply the function itself, denoted by $f^{[0]}:=f$. Consider points $z_0, z_1, \ldots, z_n$ in $\cir$ and let $f\in  D^n(\cir)$. The divided difference $f^{[n]}$ of order $n$ is defined recursively as follows:
	\begin{align*}
		f^{[n]}(z_0,z_1,\ldots,z_n):=\begin{cases}
			\frac{f^{[n-1]}(z_0,z_2,\ldots,z_n)-f^{[n-1]}(z_1,z_2,\ldots,z_n)}{z_0-z_1} & \text{if}\ z_0\neq z_1,\\
			\frac{\partial}{\partial z_1}f^{[n-1]}(z_1,\ldots,z_n) &\text{if}\ z_0=z_1.
		\end{cases}
	\end{align*}
	
\paragraph{\bf New function class.} Let $n\in\Nat$. Let $\Fpt{n}$ be the collection of all functions $f\in D^n(\cir)$ such that $f^{(n)}$ is bounded and $f^{[n]}$ can be expressed as
\begin{align}\label{divdef1}
f^{[n]}(z_0,\ldots,z_n)=\int_{\Omega}\,a_0(z_0,\omega)\cdots a_n(z_n,\omega)\,d\nu(\omega),
\end{align}
where $(\Omega, d\nu)$ is a $\sigma$-finite measure space and  \[a_i(\cdot,\cdot):\;\cir\times\Omega\to\mathbb{C},\quad i=0,\dots,n,\]
are bounded measurable functions satisfying
\begin{align}\label{divdef1b}
\int_{\Omega}\|a_0(z_0,\omega)\|_{\infty} \cdots \|a_n(z_n,\omega)\|_{\infty}\, d|\nu|(\omega)<\infty.
\end{align}
Consider
\begin{align}\label{divdef2}
\|f^{[n]}\|_{\pt{n}}:=\inf \int_{\Omega}\|a_0(z_0,\omega)\|_{\infty} \cdots \|a_n(z_n,\omega)\|_{\infty}\, d|\nu|(\omega)<\infty,
\end{align}
where the infimum is taken over all possible representations \eqref{divdef1}.
More generally, $\|\cdot\|_{\pt{n}}$ is a norm on the algebra of functions on $\T^{n+1}$ admitting the representation \eqref{divdef1} (see, e.g., \cite{PagSuko04}).

Below we relate the newly introduced set $\Fpt{n}$ to function classes previously considered in the context of trace formulas on $\T$. One of those classes is $\Fcal_n(\mathbb{T})$ defined in \eqref{FnT}
and the other is the Besov class $B^n_{\infty, 1}(\T)$ defined as follows. Let $w$ be an infinitely differentiable function on $\R$ such that
$$w\ge 0, \ \  \supp w \subset\left[\frac12,2\right] , \ \ \text{and} \ \
 w(x) = 1 - w \left(\frac{x}2\right) \ \ \text{for} \ \
 x\in[1,2].$$
Consider the trigonometric polynomials $W_m,$ and $W_m^\sharp$ defined on $\T$ by
$$W_m(z)=\sum_{k\in\mathbb{Z}}w \left(\frac{k}{2^m}\right)z^k,  \ \ m\ge 1, \ \
W_0(z)=\overline{z}+1+z, \ \ \text{and} \ \
W_m^\sharp(z)=\overline{W_m(z)}, \ \ m\ge 0.$$ Then, for each
function $\varphi$ on $\mathbb{T}$,
$$\varphi=\sum_{m\ge 0}\varphi\ast W_m+\sum_{m\ge 1}\varphi\ast W_m^\sharp.$$
The Besov class $B_{\infty, 1}^n(\T)$ consists of functions  $\varphi$
on $\mathbb{T}$ such that
$$\{\|2^{nm}\varphi\ast W_m\|_{\infty}\}_{m\ge 0}\in \ell_1 \ \
\text{and} \ \ \{\|2^{nm}\varphi\ast W_m^\sharp\|_{\infty}\}_{m\ge
1}\in \ell_1.$$

\begin{prop}\label{new_class}
Let $n\in\N$. Then, the following inclusions hold.
\begin{enumerate}[(i)]
\item\label{new_class_i} $B^n_{\infty,1}(\mathbb{T})\subset\Fpt{n}$.

\item\label{new_class_ii} $\Fpt{n}\subset \Fcal_{n-1}(\mathbb{T})$.

\item\label{new_class_iii} $\Fpt{n} = \bigcap_{k=1}^{n} \Fpt{k}$.
\end{enumerate}
\end{prop}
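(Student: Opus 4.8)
The plan is to prove the three inclusions in order, using standard facts about divided differences and the structure of the norm $\|\cdot\|_{\pt{n}}$ introduced in \eqref{divdef2}.

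For part \eqref{new_class_i}, I would exhibit an explicit integral representation \eqref{divdef1} for $f^{[n]}$ when $f\in B^n_{\infty,1}(\T)$. The natural route is to use the Littlewood--Paley decomposition $f=\sum_{m\ge 0}f\ast W_m+\sum_{m\ge 1}f\ast W_m^\sharp$ and observe that each dyadic block $f\ast W_m$ is a trigonometric polynomial of degree comparable to $2^m$. For a single monomial $z\mapsto z^j$ one has a finite sum representation of the divided difference $(z^j)^{[n]}(z_0,\dots,z_n)=\sum z_0^{i_0}\cdots z_n^{i_n}$ over $i_0+\dots+i_n=j-n$ (with the appropriate interpretation for $j<0$, using $z^{-1}=\bar z$ on $\T$), so a trigonometric polynomial $g$ of degree $N$ admits a representation with $\|g^{[n]}\|_{\pt{n}}\le c_n N^n\|g\|_\infty$; this is the standard estimate behind the boundedness of MOIs on polynomials (cf.\ \cite{PagSuko04, Pe06}). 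Applying this to $g=f\ast W_m$ with $N\asymp 2^m$ gives $\|(f\ast W_m)^{[n]}\|_{\pt{n}}\le c_n 2^{nm}\|f\ast W_m\|_\infty$, and similarly for $W_m^\sharp$. Summing and using that $\|\cdot\|_{\pt{n}}$ is a norm (subadditive), the Besov condition $\{2^{nm}\|f\ast W_m\|_\infty\}\in\ell_1$ forces the series of divided differences to converge in $\|\cdot\|_{\pt{n}}$; one then checks the limit equals $f^{[n]}$ (e.g.\ by pointwise convergence on the diagonal-free region and continuity) and that $f\in D^n(\T)$ with $f^{(n)}$ bounded, which also follows from absolute summability of the dyadic pieces in $C^n$.

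For part \eqref{new_class_ii}, suppose $f\in\Fpt{n}$ with representation \eqref{divdef1}. Restricting to the diagonal $z_1=\dots=z_n=z_0=z$ we get $f^{[n]}(z,\dots,z)=\frac{1}{n!}f^{(n)}(z)$, which is bounded; integrating up $n$ times along $\T$ should give $f\in C^{n-1}(\T)$ with control on $f^{(n-1)}$. To get into $\Fcal_{n-1}(\T)$ I need absolute summability of $\{|k|^{n-1}|\hat f(k)|\}$. The key is that the representation \eqref{divdef1} of $f^{[n]}$ also yields, by setting all but two variables equal and integrating, a representation of $f^{[1]}$ or $f^{[0]}$-type objects controlling Fourier coefficients; more directly, one can use that membership in $\pt{n}$ implies a bound on $f^{(n)}$ of ``Wiener-type'' after a careful decomposition. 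The cleanest argument: extract from \eqref{divdef1}, by collapsing variables, that $f^{(n)}$ lies in the Wiener algebra-like space, whence $\sum|k|^{n}|\hat f(k)|$ has enough decay that $\sum |k|^{n-1}|\hat f(k)|<\infty$; alternatively compare with the known inclusion $\Fpt{n}\subset B^n_{\infty,1}$-adjacent spaces via the standard fact that the MOI-representable class for $f^{[n]}$ sits inside the functions with $f^{(n)}\in$ (a predual of) such multiplier spaces.

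Part \eqref{new_class_iii} is the structural compatibility statement, and I expect it to follow from two observations combined. The inclusion $\Fpt{n}\subset\bigcap_{k=1}^n\Fpt{k}$ reduces (by induction) to showing $\Fpt{n}\subset\Fpt{n-1}$: given a representation \eqref{divdef1} of $f^{[n]}$, integrate the recursion $f^{[n-1]}(z_0,z_2,\dots,z_n)-f^{[n-1]}(z_1,z_2,\dots,z_n)=(z_0-z_1)f^{[n]}(z_0,\dots,z_n)$ along a path joining $z_1$ to $z_0$ to recover $f^{[n-1]}$ as an integral of the same product-type kernels times a bounded factor $\int_{z_1}^{z_0}\frac{dw}{\text{(difference)}}$, keeping the $\pt{n-1}$-norm finite; the boundedness of $f^{(n-1)}$ needed for membership comes from part \eqref{new_class_ii} applied at level $n$. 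The reverse inclusion $\bigcap_{k=1}^n\Fpt{k}\subset\Fpt{n}$ is immediate once we know that being in $\Fpt{n-1}$ and in $\Fpt{1}$ together with enough differentiability forces the top divided difference to be representable — here one uses the identity expressing $f^{[n]}$ in terms of $f^{[n-1]}$ and a divided-difference-in-one-extra-variable, i.e.\ $f^{[n]}(z_0,\dots,z_n)=\big(f^{[n-1]}(\cdot,z_2,\dots,z_n)\big)^{[1]}(z_0,z_1)$, and the product structure \eqref{divdef1} is stable under this operation because $\Fpt{1}$-type representability of the inner slot propagates. \textbf{The main obstacle} I anticipate is part \eqref{new_class_i}: making rigorous the passage from the formal Littlewood--Paley sum to a single honest $\sigma$-finite integral representation \eqref{divdef1} (as opposed to a convergent series of such), and verifying the infimum-norm $\|\cdot\|_{\pt{n}}$ is genuinely controlled — this requires either a direct amalgamation of the countably many finite representations into one measure space $\Omega=\bigsqcup_m\Omega_m$ with the weights $2^{nm}\|f\ast W_m\|_\infty$ absorbed into $d\nu$, or an appeal to completeness of the normed space of $\pt{n}$-representable kernels.
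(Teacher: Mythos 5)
The decisive gap is in your argument for \eqref{new_class_ii}. You propose to extract Fourier decay of $f$ from the integral representation \eqref{divdef1}, claiming that by ``collapsing variables'' one finds that $f^{(n)}$ lies in a ``Wiener algebra-like space'' so that $\sum_k|k|^{n}|\hat f(k)|$ has enough decay. This cannot work: the factors $a_i(\cdot,\omega)$ are merely bounded measurable functions, so collapsing \eqref{divdef1} to the diagonal gives nothing beyond boundedness of $f^{(n)}$ (which is already part of the definition of $\Fpt{n}$, so your diagonal observation is redundant). Moreover, by part \eqref{new_class_i} the class $\Fpt{n}$ contains all of $B^n_{\infty,1}(\T)$, which contains functions whose $n$th derivative does \emph{not} have an absolutely convergent Fourier series (e.g.\ built from Rudin--Shapiro blocks), so no Wiener-type conclusion about $f^{(n)}$ can follow from membership in $\Fpt{n}$. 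The representation \eqref{divdef1} is in fact irrelevant here: the paper's proof of \eqref{new_class_ii} uses only that $f^{(n)}$ is bounded, hence in $L^2(\T)$; integration by parts gives $l\,\widehat{f^{(k)}}(l)=\widehat{f^{(k+1)}}(l-1)$, and Cauchy--Schwarz against the weight $1/|l|$ yields absolute convergence of the Fourier series of $f^{(k)}$ for $k\le n-1$, whence $\sum_l|l|^{n-1}|\hat f(l)|<\infty$. This elementary step is exactly what is missing from your sketch.

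Two secondary points. For \eqref{new_class_i}, the paper simply cites \cite[Theorem 4.4]{Pe06}; your Littlewood--Paley plan is the right outline, but the step ``monomial expansion of $(z^j)^{[n]}$, hence $\|g^{[n]}\|_{\pt{n}}\le c_nN^n\|g\|_\infty$ for trigonometric polynomials of degree $N$'' does not follow as written: termwise summation bounds $\|g^{[n]}\|_{\pt{n}}$ by $c_nN^n$ times the $\ell^1$-norm of $\hat g$, which costs an extra factor $N^{1/2}$ against $\|g\|_\infty$ and would only yield $B^{n+1/2}_{\infty,1}\subset\Fpt{n}$; the sharp $N^n\|g\|_\infty$ bound is precisely the nontrivial content of Peller's theorem, so you should cite it (as the paper does) rather than derive it from the monomial formula. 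For \eqref{new_class_iii}, note that $\bigcap_{k=1}^{n}\Fpt{k}\subset\Fpt{n}$ is trivial (the intersection includes the factor $k=n$), so the ``propagation'' argument you sketch for that direction is unnecessary; the substantive inclusion $\Fpt{n}\subset\Fpt{k}$, $k<n$, is obtained in the paper at once from \eqref{new_class_ii} and \eqref{new_class_i} (since $\Fcal_{n-1}(\mathbb{T})$ sits inside $B^{k}_{\infty,1}(\T)$ for $k\le n-1$), which is cleaner than your device of ``integrating the recursion along a path'': as written the factor $\int_{z_1}^{z_0}dw/(\text{difference})$ is not meaningful, although a freeze-one-variable-and-iterate version of the same recursion identity could be made rigorous.
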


\begin{proof}
(i) Follows from \cite[Theorem 4.4]{Pe06}. 

(ii) Let $f\in\Fpt{n}$. Note that the Fourier series of $f^{(k)}$ is absolutely convergent for $0 \leq k \leq n-1$. Indeed, we have
\begin{align*}
\widehat{f^{(k)}}(l)=\frac{1}{2\pi i} \int_{\mathbb{T}} f^{(k)}(z)\bar{z}^{l+1} dz = \frac{1}{2\pi} \int_{0}^{2\pi} f^{(k)}(e^{i\theta}) e^{-il\theta} d\theta.
\end{align*}
Integrating by parts yields
\begin{align*}
\widehat{f^{(k)}}(l)=\frac{1}{2\pi i l} \int_{0}^{2\pi} f^{(k+1)}(e^{i\theta}) (i e^{i\theta}) e^{-il\theta} d\theta=\frac{1}{2\pi il} \int_{\mathbb{T}} f^{(k+1)}(z)\bar{z}^{l} dz,\quad l\neq 0.
\end{align*}
Thus, we obtain $l\widehat{f^{(k)}}(l)=\widehat{f^{(k+1)}}(l-1)$. Since $f^{(k+1)} \in L^{\infty}(\mathbb{T}) \subset L^2(\mathbb{T})$, we have
\begin{align*}
\sum_{l \in \mathbb{Z}\setminus\{0\}}\left| \widehat{f^{(k)}}(l) \right|
=\sum_{l \in \mathbb{Z}\setminus\{0\}} \frac{1}{|l|} \left| \widehat{f^{(k+1)}}(l-1) \right|
\leq \Big( \sum_{l \in \mathbb{Z}\setminus\{0\}} \frac{1}{l^2}\Big)^{1/2}
\Big( \sum_{l \in \mathbb{Z}\setminus\{0\}} \left| \widehat{f^{(k+1)}}(l-1) \right|^2 \Big)^{1/2} < \infty.
\end{align*}
Consequently, for $n\ge 2$,
\begin{align*}
\sum_{l\in\mathbb{Z}\setminus\{0\}}|l|^{n-1}|\hat{f}(l)|
&\le n^{n-1}\!\sum_{l\in\mathbb{Z}\setminus\{0\}}|l\cdots(l-n+2)|\,|\hat{f}(l)|\\
&=n^{n-1}\!\sum_{l \in \mathbb{Z}\setminus\{0\}} |(l+1)\cdots (l+n-1)|\, |\hat{f}(l+n-1)|\\
&=n^{n-1}\!\sum_{l\in\mathbb{Z}\setminus\{0\}}\left| \widehat{f^{(n-1)}}(l)\right|<\infty,
\end{align*}
implying $f\in\Fcal_{n-1}(\mathbb{T})$.

(iii) It follows from \eqref{new_class_i} and \eqref{new_class_ii} that $f \in \Fpt{k}$, implying the result.
\end{proof}

\paragraph{\bf {Multilinear operator integration.}}		
The subsequent definition provides a simple yet widely applicable formulation of the multilinear operator integral, as outlined in \cite{PoSkSujfa2016, Skadv2017}.
	\begin{dfn}\label{moi}
		Let $f\in\Fpt{n}$. Let $k\in\{1,\ldots,n\}$. Let $1 \leq \alpha, \alpha_i \leq \infty$ for $i = 1,\ldots, k$ be such that $\frac{1}{\alpha_1} + \cdots + \frac{1}{\alpha_k} = \frac{1}{\alpha}$. Let $U_i$, $i= 0, ..., k,$ be unitary operators on $\hil$. The mapping
\[T_{f^{[k]}}^{ U_0,\ldots,U_k}: \mathcal{B}_{\alpha_1}(\hil)\times\cdots\times\mathcal{B}_{\alpha_{k}}(\hil)\longrightarrow \mathcal{B}_{\alpha}(\hil)\]	
defined by
\begin{align*}
T_{f^{[k]}}^{ U_0,\ldots,U_k}(V_1,\ldots,V_k)=\int_\Omega a_0(U_0,\omega)V_1a_1(U_1,\omega)\cdots V_ka_k(U_k,m)\,d\nu(\omega)
\end{align*}
where $(\Omega,\nu)$ and $a_i(\cdot,\cdot)$ satisfy \eqref{divdef1} and \eqref{divdef1b}, is called a multilinear operator integral with symbol $f^{[k]}$.  The mapping $T_{f^{[k]}}^{ U_0,\ldots,U_k}$ is independent of the choice of $(\Omega,\nu)$, $a_i(\cdot,\cdot)$ in the decomposition \eqref{divdef1} and $\|T_{f^{[k]}}^{ U_0,\ldots,U_k}\|\le\|f^{[k]}\|_{\pt{k}}$.
	\end{dfn}
	
The following properties of multilinear operator integrals are essential in proving our results.
\begin{thm}\label{moiest}
Let $n\in\Nat$, let $k\in\{1,\ldots,n\}$. Let $1 < \alpha,  \widetilde\alpha, \alpha_i < \infty$ for $i = 1,\ldots, k$ be such that $\frac{1}{\alpha_1} + \cdots + \frac{1}{\alpha_k} = \frac{1}{\alpha}$ and $\frac{1}{\alpha_1} + \cdots + \frac{1}{\alpha_{k-1}} = \frac{1}{\widetilde\alpha}$. Let $U_i$, $i= 0, ..., k,$ be unitary operators on $\hil$  and let $f\in\Fpt{n}$. Then, the following assertions hold.
{\vspace{.1in}}
		
\noindent$(i)$ The transformation $T_{f^{[k]}}^{ U_0,\ldots,U_k}: \mathcal{B}_{\alpha_1}(\hil)\times\cdots\times\mathcal{B}_{\alpha_{k}}(\hil)\longrightarrow \mathcal{B}_{\alpha}(\hil)$  is bounded and
\[\|T_{f^{[k]}}^{ U_0,\ldots,U_k}\|\leq c_{k,\alpha_1,\ldots, \alpha_k}\|f^{(k)}\|_\infty.\]
		
\noindent $(ii)$  Let $\phi_k(z_0,z_1,\ldots,z_{k-1}):=f^{[k]}(z_0,z_1,\ldots,z_{k-1},z_0)$. The transformation $T_{\phi_k}^{ U_0,\ldots,U_{k-1}}: \mathcal{B}_{\alpha_1}(\hil)\times\cdots\times\mathcal{B}_{\alpha_{k-1}}(\hil)\longrightarrow \mathcal{B}_{\widetilde\alpha}(\hil)$  is bounded and
\[\|T_{\phi_k}^{ U_0,\ldots,U_{k-1}}\|\leq c_{k,\alpha_1,\ldots, \alpha_{k-1}}\|f^{(k)}\|_\infty.\]
\end{thm}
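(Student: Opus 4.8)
The plan is to reduce the boundedness estimates with the $\|f^{(n)}\|_\infty$ bound to the case $k=n$ via the nesting property $\Fpt{n}=\bigcap_{k=1}^n\Fpt{k}$ (Proposition \ref{new_class}\eqref{new_class_iii}), so that it suffices to control $\|f^{[k]}\|_{\pt{k}}$ and $\|\phi_k\|_{\pt{k}}$ in terms of $\|f^{(k)}\|_\infty$; the bound $\|T_{f^{[k]}}^{U_0,\dots,U_k}\|\le\|f^{[k]}\|_{\pt{k}}$ is already recorded in Definition \ref{moi}. For part $(i)$, the key point is that the integral representation \eqref{divdef1} furnishing a finite $\pt{k}$-norm is \emph{not} what we want here, because $\|f^{[k]}\|_{\pt{k}}$ need not be controlled by $\|f^{(k)}\|_\infty$ on the nose. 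Instead I would invoke the Littlewood--Paley/Besov machinery: decompose $f^{(k)}=\sum_{m}$ (dyadic blocks) as in the definition of $B^0_{\infty,1}$, note that each block is a trigonometric polynomial, and use the classical fact (this is essentially \cite[Theorem 4.4]{Pe06}, already cited in the proof of Proposition \ref{new_class}\eqref{new_class_i}) that for a trigonometric polynomial $g$ of degree $\le 2^m$ one has a $\pt{k}$-type estimate for the divided difference $g^{[k]}$ with constant $\lesssim 2^{-k m}$ times a universal factor, while $\|g\|_\infty\lesssim\|f^{(k)}\|_\infty$ uniformly. Summing the geometric series in $m$ after antidifferentiating $k$ times yields $\|f^{[k]}\|_{\pt{k}}\le c_k\|f^{(k)}\|_\infty$, hence the operator bound. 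The Hölder relation $\frac1{\alpha_1}+\cdots+\frac1{\alpha_k}=\frac1\alpha$ enters only through Definition \ref{moi}, which already guarantees $T_{f^{[k]}}^{U_0,\dots,U_k}$ maps into $\mathcal B_\alpha(\hil)$ with norm $\le\|f^{[k]}\|_{\pt{k}}$; the interpolation constant $c_{k,\alpha_1,\dots,\alpha_k}$ absorbs the (finite, since $1<\alpha_i<\infty$) norms of the triangular truncation / Hilbert-transform-type projections implicit in the block decomposition.

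For part $(ii)$, the object $\phi_k(z_0,\dots,z_{k-1})=f^{[k]}(z_0,\dots,z_{k-1},z_0)$ is a symbol in $k$ variables with the last and first slots identified. The plan is to show $\phi_k$ admits a representation of the form \eqref{divdef1} in $k$ variables with $\pt{}$-norm $\le c_k\|f^{(k)}\|_\infty$, so that the associated multilinear operator integral $T_{\phi_k}^{U_0,\dots,U_k}$ (which by construction places $a_0(U_0,\omega)$ and $a_k(U_0,\omega)$ on the same operator $U_0$, with $k-1$ perturbations interlaced) is bounded from $\mathcal B_{\alpha_1}\times\cdots\times\mathcal B_{\alpha_{k-1}}$ into $\mathcal B_{\widetilde\alpha}$ with $\frac1{\widetilde\alpha}=\frac1{\alpha_1}+\cdots+\frac1{\alpha_{k-1}}$. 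Starting from the dyadic decomposition of $f^{(k)}$ as in part $(i)$, for each polynomial block $g$ the symbol $g^{[k]}(z_0,\dots,z_{k-1},z_0)$ is again representable with an estimate $\lesssim 2^{-km}\|g\|_\infty$ (the diagonal restriction only improves, or at worst preserves, the relevant norm, since one may multiply the functions $a_0(\cdot,\omega)$ and $a_k(\cdot,\omega)$ pointwise into a single bounded measurable function of one variable). Summing over $m$ gives the claim.

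The main obstacle I anticipate is the quantitative polynomial estimate: getting, for a trigonometric polynomial $g$ of degree $\le N$, a bound of the form $\|g^{[k]}\|_{\pt{k}}\le c_k N^{-k}\|g\|_\infty$ with a \emph{uniform} constant $c_k$, and the analogous bound for the diagonal-restricted symbol, together with checking that the relevant auxiliary projections are bounded on $\mathcal B_{\alpha_i}(\hil)$ for $1<\alpha_i<\infty$ (so that the constant genuinely depends only on $k$ and the $\alpha_i$, not on $f$). This is exactly the content that \cite{Pe06} (and its refinements in \cite{PoSkSujfa2016, Skadv2017}) supplies for the scalar divided difference; the work here is to track the diagonal identification in $(ii)$ and to confirm that passing from the polynomial estimate to a general $f\in\Fpt{n}$ via the absolutely convergent dyadic sum is legitimate, which follows because $\|\cdot\|_{\pt{k}}$ is a genuine norm (subadditive, as noted after \eqref{divdef2}) and the blocks of $f^{(k)}$ satisfy $\sum_m 2^{-km}\|(f^{(k)})\ast W_m\|_\infty<\infty$ — indeed $\sum_m\|(f^{(k)})\ast W_m\|_\infty$ need not converge, but the factor $2^{-km}$ and the fact that $\|(f^{(k)})\ast W_m\|_\infty\le 2\|f^{(k)}\|_\infty$ makes the series $\sum_m 2^{-km}\cdot 2\|f^{(k)}\|_\infty$ converge, yielding the stated bound.
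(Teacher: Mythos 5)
There is a genuine gap, and it sits at the heart of your part $(i)$: the estimate $\|f^{[k]}\|_{\pt{k}}\le c_k\|f^{(k)}\|_\infty$ on which your whole reduction rests is false, and your own summation argument shows why it cannot be obtained this way. For a dyadic block $g$ of $f$ with spectrum of size $\approx 2^m$, the Peller-type polynomial estimate reads $\|g^{[k]}\|_{\pt{k}}\le c_k(\deg g)^k\|g\|_\infty\approx c_k\,2^{km}\|g\|_\infty$, while antidifferentiating the corresponding block of $f^{(k)}$ only gains the reciprocal factor, $\|g\|_\infty\approx 2^{-km}\|g^{(k)}\|_\infty$. The two factors cancel exactly, so each block contributes $\approx c_k\|f^{(k)}\|_\infty$ and the series over $m$ diverges; there is no geometric decay left to sum, contrary to your final paragraph. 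What the Littlewood--Paley argument genuinely yields is $\|f^{[k]}\|_{\pt{k}}\lesssim\|f\|_{B^k_{\infty,1}}$ (this is \cite[Theorem 4.4]{Pe06}, i.e.\ Proposition \ref{new_class}(i)), and $B^k_{\infty,1}(\T)$ is strictly smaller than the class of functions with bounded $k$th derivative. Moreover, if your bound were true, Definition \ref{moi} would make $T_{f^{[k]}}^{U_0,\dots,U_k}$ bounded with a constant independent of $\alpha_1,\dots,\alpha_k$ and also at the endpoints $\mathcal{B}_1(\hil)$ and $\mathcal{B}(\hil)$, contradicting the known failure of operator Lipschitz estimates in trace and operator norm; the dependence of $c_{k,\alpha_1,\dots,\alpha_k}$ on the exponents in the statement is precisely the signal that the result cannot follow from a $\pt{k}$-norm bound alone.

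The paper's proof is of a different nature: it does not estimate $\|f^{[k]}\|_{\pt{k}}$ at all, but quotes the Schatten-class multilinear operator integral bounds of \cite[Proposition 7.4]{CoLemSu21} and, for the diagonally restricted symbol $\phi_k$ in part $(ii)$, \cite[Theorem 3.3]{Co24}, combined with the density of $\mathcal{B}_2(\hil)\cap\mathcal{B}_{\alpha_i}(\hil)$ in $\mathcal{B}_{\alpha_i}(\hil)$; those theorems rely on genuinely $L^p$-specific noncommutative harmonic analysis, which is why $1<\alpha_i<\infty$ is required. Your handling of the diagonal identification in $(ii)$ --- merging $a_0(\cdot,\omega)$ and $a_k(\cdot,\omega)$ into one bounded function of $z_0$ --- is correct at the level of $\pt{}$-norms, but it inherits the same flaw, being predicated on a $\|f^{(k)}\|_\infty$ control of a projective-type norm that does not exist. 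To repair the argument you must invoke (or reprove) the cited Schatten-class results rather than reduce everything to the norm bound recorded in Definition \ref{moi}.
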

\begin{proof}
The result follows from \cite[Proposition 7.4]{CoLemSu21} and \cite[Theorem 3.3]{Co24} along with the fact that, for $2 \leq \alpha_i < \infty$, $\mathcal{B}_2(\hil) \cap \mathcal{B}_{\alpha_i}(\hil)$ is dense in $\mathcal{B}_{\alpha_i}(\hil)$.
\end{proof}
\begin{crlre}\label{useofcyclicity}
		Let $n\in\Nat$ and $f\in\Fpt{n}$. Let $U_0, U_1$ be unitary operators on $\hil$ and let $V_1,V_2,\ldots,V_n\in\bnh$. Then, the following assertions hold.
		\begin{enumerate}[(i)]
			\item
			For $n=1$,
			\[\Tr\left(T_{f^{[1]}}^{\,U_0,U_0}(V_1)\right)=\Tr\left(f'(U_0)V_1\right)\]
			and
			\begin{align}
				\label{useofcyclicity1}
				\left|\Tr\left(T_{f^{[1]}}^{\,U_0,U_0}(V_1)\right)\right|\leq c_1\|f'\|_\infty\,\|V_1\|_1.
			\end{align}
			\item  For $n\geq 2$,
			\[\Tr\left(T_{f^{[n]}}^{U_0,U_1,U_0,\ldots,U_0}(V_1,\ldots,V_n)\right)
			=\Tr\left(T_{\widetilde{f^{[n]}}}^{U_0,U_1,U_0,\ldots,U_0}(V_1,\ldots,V_{n-1})\, V_n\right)\]
			where $\widetilde{f^{[n]}}(z_0,\ldots,z_{n-1})=f^{[n]}(z_0,\ldots,z_{n-1},z_0)$. Furthermore,
			\begin{align}
				\label{useofcyclicity2}
				\left|\Tr\left(T_{f^{[n]}}^{U_0,U_1,U_0,\ldots,U_0}(V_1,\ldots,V_n)\right)\right|\leq c_n\|f^{(n)}\|_\infty\prod_{k=1}^{n}\|V_k\|_n.
			\end{align}
		\end{enumerate}
	\end{crlre}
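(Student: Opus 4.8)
The plan is to derive both assertions from the cyclicity of the trace, applied under the operator-valued integral defining the multilinear operator integral, together with the Borel functional calculus of the unitary $U_0$ and the norm bounds of Theorem~\ref{moiest}. Throughout, the interchange of $\Tr$ (or of the functional calculus) with the $\omega$-integration is legitimized by the absolute integrability condition \eqref{divdef1b} and the elementary bound $\|a_i(U_i,\omega)\|\le\|a_i(\cdot,\omega)\|_\infty$ valid for unitary $U_i$.

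For (i), I would fix a representation $f^{[1]}(z_0,z_1)=\int_\Omega a_0(z_0,\omega)a_1(z_1,\omega)\,d\nu(\omega)$ as in \eqref{divdef1}--\eqref{divdef1b}, so that $T_{f^{[1]}}^{U_0,U_0}(V_1)=\int_\Omega a_0(U_0,\omega)V_1a_1(U_0,\omega)\,d\nu(\omega)$, the integrand being Bochner integrable in $\boh$ by \eqref{divdef1b}. Taking $\Tr$ inside, applying cyclicity to write $\Tr\big(a_0(U_0,\omega)V_1a_1(U_0,\omega)\big)=\Tr\big(a_1(U_0,\omega)a_0(U_0,\omega)V_1\big)$, and merging the two commuting factors of $U_0$ through the functional calculus, I obtain $\Tr\big(g(U_0)V_1\big)$, where $g$ is the function $z\mapsto\int_\Omega a_0(z,\omega)a_1(z,\omega)\,d\nu(\omega)=f^{[1]}(z,z)=f'(z)$ (Fubini exchanging the $\omega$-integral with the spectral integral of $U_0$). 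Hence $\Tr\big(T_{f^{[1]}}^{U_0,U_0}(V_1)\big)=\Tr(f'(U_0)V_1)$, and \eqref{useofcyclicity1} follows from $|\Tr(f'(U_0)V_1)|\le\|f'(U_0)\|\,\|V_1\|_1\le\|f'\|_\infty\|V_1\|_1$ because $\sigma(U_0)\subset\T$.

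For (ii), the same mechanism with $n$ factors produces the identity. Writing $T_{f^{[n]}}^{U_0,U_1,U_0,\ldots,U_0}(V_1,\ldots,V_n)=\int_\Omega a_0(U_0,\omega)V_1a_1(U_1,\omega)V_2a_2(U_0,\omega)\cdots V_na_n(U_0,\omega)\,d\nu(\omega)$, whose integrand is Bochner integrable in $\boh$ by H\"older ($\|\cdot\|_1\le\prod_{k=1}^n\|V_k\|_n\cdot\prod_{i=0}^n\|a_i(\cdot,\omega)\|_\infty$) and \eqref{divdef1b}, I would take $\Tr$ under the integral, use cyclicity to bring $a_n(U_0,\omega)$ to the front, and merge it with $a_0(U_0,\omega)$, reaching $\int_\Omega\Tr\big((a_0a_n)(U_0,\omega)V_1a_1(U_1,\omega)\cdots V_{n-1}a_{n-1}(U_0,\omega)V_n\big)\,d\nu(\omega)$. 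Since setting $z_n=z_0$ in the chosen representation of $f^{[n]}$ furnishes the representation $\widetilde{f^{[n]}}(z_0,\ldots,z_{n-1})=\int_\Omega(a_0a_n)(z_0,\omega)a_1(z_1,\omega)\cdots a_{n-1}(z_{n-1},\omega)\,d\nu(\omega)$ of $\widetilde{f^{[n]}}$, this last expression is precisely $\Tr\big(T_{\widetilde{f^{[n]}}}^{U_0,U_1,U_0,\ldots,U_0}(V_1,\ldots,V_{n-1})\,V_n\big)$, which is the asserted identity. The estimate \eqref{useofcyclicity2} then follows by combining this identity with H\"older's inequality for the trace with exponents $\tfrac{n}{n-1}$ and $n$, and with Theorem~\ref{moiest}(ii) applied to $\phi_n=\widetilde{f^{[n]}}$ and $\alpha_1=\cdots=\alpha_{n-1}=n$, $\widetilde\alpha=\tfrac{n}{n-1}$ (an admissible choice for $n\ge2$, on taking the auxiliary exponent $\alpha_n$ in that theorem to be, say, $n+1$), which bounds $\|T_{\widetilde{f^{[n]}}}^{U_0,U_1,U_0,\ldots,U_0}(V_1,\ldots,V_{n-1})\|_{n/(n-1)}$ by $c_n\|f^{(n)}\|_\infty\prod_{k=1}^{n-1}\|V_k\|_n$.

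I do not expect a genuine obstacle here. The only points requiring care are the bookkeeping of the repeated interchanges of $\Tr$ and the functional calculus with the $\omega$-integration, and the observation -- which is really why part (ii) is stated -- that one cannot estimate $\Tr\big(T_{f^{[n]}}^{U_0,U_1,U_0,\ldots,U_0}(V_1,\ldots,V_n)\big)$ directly through Theorem~\ref{moiest}(i), since that would demand the excluded Schatten endpoint $\alpha=1$; the cyclicity identity of (ii) lowers the problem to an $(n-1)$-linear integral, to which Theorem~\ref{moiest}(ii) applies after a single use of H\"older's inequality.
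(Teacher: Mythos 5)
Your proposal is correct and follows essentially the same route as the paper: both take the trace under the $\omega$-integral of the representation from Definition \ref{moi}, use cyclicity to merge $a_n(U_0,\omega)$ with $a_0(U_0,\omega)$ so that the resulting symbol is exactly $\widetilde{f^{[n]}}$, and then obtain \eqref{useofcyclicity1}--\eqref{useofcyclicity2} from Theorem \ref{moiest} together with H\"older's inequality. Your extra remarks (Bochner integrability justifying the interchange, and the explicit admissible exponents in Theorem \ref{moiest}(ii)) only flesh out details the paper delegates to the argument of \cite[Corollary 4.8]{AzCaDoSu09}.
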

	
\begin{proof}
If $n\geq 2$, it follows from Definition \ref{moi}, a minor adjustment of the argument in the proof of \cite[Corollary 4.8]{AzCaDoSu09}, and cyclicity of the trace that
\begin{align*}
&\Tr\left(T_{f^{[n]}}^{U_0,U_1,U_0,\ldots,U_0}(V_1,\ldots,V_n)\right)\\
=&\Tr\left(\int_\Omega a_0(U_0,\omega)\,V_1\,a_1(U_1,\omega)\cdots\, V_n\,a_n(U_0,\omega)\,d\nu(\omega)\right)\\
=&\Tr\left[\left(\int_\Omega a_n(U_0,\omega)a_0(U_0,\omega)\,V_1\,a_1(U_1,\omega)\cdots\, V_{n-1}\,a_{n-1}(U_0,\omega)\right)V_n\,d\nu(\omega)\right]\\
=&\Tr\left(T_{\widetilde{f^{[n]}}}^{U_0,U_1,U_0,\ldots,U_0}(V_1,\ldots,V_{n-1})\cdot V_n\right).
		\end{align*}
		Similarly, we obtain $\Tr\left(T_{f^{[1]}}^{\,U_0,U_0}(V_1)\right)=\Tr\left(f'(U_0)V_1\right)$. The estimates \eqref{useofcyclicity1} and \eqref{useofcyclicity2} follow from the application of Theorem \ref{moiest} and H\"{o}lder's inequality.
	\end{proof}
	
	The following perturbation formulas follow from the proof of \cite[Lemma 2.4(i)]{PoSkSujfa2016}.
	
	\begin{lma}\label{algebraicprop}
		Let $n\in\Nat$ and $f\in\Fpt{n}$. Let $U_0, U_1, U_2$ be unitary operators on $\hil$ and let $V_1,V_2,\ldots,V_{n-1}\in\mathcal{B}(\hil)$. Then,
		\begin{enumerate}[(i)]
			\item
			$f(U_1)-f(U_0)=T_{f^{[1]}}^{U_1,U_0}(U_1-U_0)=T_{f^{[1]}}^{U_0,U_1}(U_1-U_0)$,
			\item for $n\geq 2$,
			\begin{align*}			T_{f^{[n-1]}}^{U_0,U_1,U_0,\ldots,U_0}(V_1,\ldots,V_{n-1})-T_{f^{[n-1]}}^{U_0,U_2,U_0,\ldots,U_0}(V_1,\ldots,V_{n-1})\\
				=T_{f^{[n]}}^{U_0,U_1,U_2,U_0,\ldots,U_0}(V_1,U_1-U_2,V_2,\ldots,V_{n-1}).
			\end{align*}
		\end{enumerate}
	\end{lma}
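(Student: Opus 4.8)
The plan is to reduce each identity to an elementary identity for divided differences and then lift it to the operator level via the spectral theorem and Fubini, following the template of the proof of \cite[Lemma 2.4(i)]{PoSkSujfa2016}. By Proposition~\ref{new_class}(iii) we have $f\in\Fpt{k}$ for every $1\le k\le n$, so each of $f^{[1]}$, $f^{[n-1]}$, $f^{[n]}$ admits a representation \eqref{divdef1}; in particular all the multilinear operator integrals in the statement are well defined as norm-convergent integrals, the arguments $U_1-U_0$ and $U_1-U_2$ being merely bounded. Throughout, let $E_0,E_1,E_2$ denote the spectral measures of the unitaries $U_0,U_1,U_2$.

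For part (i), fix a representation $f^{[1]}(z_0,z_1)=\int_\Omega a_0(z_0,\omega)\,a_1(z_1,\omega)\,d\nu(\omega)$ and recall the scalar identity $(z_0-z_1)f^{[1]}(z_0,z_1)=f(z_0)-f(z_1)$, which holds on all of $\cir\times\cir$ (off the diagonal by the definition of $f^{[1]}$, and trivially on it). Substituting $U_j=\int_\cir z\,dE_j(z)$ into $T_{f^{[1]}}^{U_1,U_0}(U_1-U_0)=\int_\Omega a_0(U_1,\omega)(U_1-U_0)a_1(U_0,\omega)\,d\nu(\omega)$, using that $U_j$ commutes with $a_i(U_j,\omega)$, and interchanging $\int_\Omega$ with the spectral integrals (justified by \eqref{divdef1b} and boundedness of the $a_i$), one obtains
\[
T_{f^{[1]}}^{U_1,U_0}(U_1-U_0)=\int_{\cir^2}\big(f(z_0)-f(z_1)\big)\,dE_1(z_0)\,dE_0(z_1)=f(U_1)-f(U_0),
\]
the last equality because $\int_\cir dE_0=\int_\cir dE_1=I$. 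The identity with $U_0$ and $U_1$ interchanged follows by the same computation, using the symmetry of $f^{[1]}$, which merely reverses the sign of the scalar factor $z_0-z_1$.

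For part (ii), fix $f^{[n]}(z_0,\dots,z_n)=\int_\Omega a_0(z_0,\omega)\cdots a_n(z_n,\omega)\,d\nu(\omega)$ and put $b_i(z,\omega):=z\,a_i(z,\omega)$, which is again bounded and measurable with $\|b_i(\cdot,\omega)\|_\infty=\|a_i(\cdot,\omega)\|_\infty$. Since $U_j$ commutes with $a_i(U_j,\omega)$, the identity
\[
a_1(U_1,\omega)(U_1-U_2)a_2(U_2,\omega)=b_1(U_1,\omega)\,a_2(U_2,\omega)-a_1(U_1,\omega)\,b_2(U_2,\omega)
\]
splits $T_{f^{[n]}}^{U_0,U_1,U_2,U_0,\dots,U_0}(V_1,U_1-U_2,V_2,\dots,V_{n-1})$ into a difference of two multilinear operator integrals whose symbols (again of the form \eqref{divdef1}, with the same $\nu$) are $z_1f^{[n]}$ and $z_2f^{[n]}$. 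Passing to the spectral decompositions of $U_0,U_1,U_2$ as in part (i), this difference becomes the single operator-valued spectral integral
\[
\int_{\cir^{n+1}}(z_1-z_2)f^{[n]}(z_0,\dots,z_n)\,dE_0(z_0)\,V_1\,dE_1(z_1)\,dE_2(z_2)\,V_2\,dE_0(z_3)\cdots dE_0(z_n).
\]
Now invoke the divided difference recursion with respect to the second and third arguments, which is legitimate because $f^{[n]}$ is symmetric:
\[
(z_1-z_2)f^{[n]}(z_0,z_1,z_2,\dots,z_n)=f^{[n-1]}(z_0,z_1,z_3,\dots,z_n)-f^{[n-1]}(z_0,z_2,z_3,\dots,z_n).
\]
Splitting the integral along this identity, expanding $f^{[n-1]}$ through a representation \eqref{divdef1}, and applying Fubini together with $\int_\cir dE_2=I$ in the first summand (whose integrand no longer involves $z_2$) and $\int_\cir dE_1=I$ in the second (whose integrand no longer involves $z_1$) collapses the redundant spectral measure and identifies the two summands as $T_{f^{[n-1]}}^{U_0,U_1,U_0,\dots,U_0}(V_1,\dots,V_{n-1})$ and $T_{f^{[n-1]}}^{U_0,U_2,U_0,\dots,U_0}(V_1,\dots,V_{n-1})$, respectively. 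This is the claimed identity.

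There is no conceptual obstacle; all the content resides in the scalar divided difference identities. The only points requiring care are the bookkeeping steps — the Fubini interchanges of $\int_\Omega$ with the iterated operator-valued spectral integrals (controlled by \eqref{divdef1b}, boundedness of the $a_i$, and $V_j\in\mathcal{B}(\hil)$) and the collapse of a spectral integral to $I$ once its variable disappears from the symbol — which is precisely the computation carried out in the proof of \cite[Lemma 2.4(i)]{PoSkSujfa2016}.
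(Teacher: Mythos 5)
Your argument is correct and matches the paper's route: the paper simply defers to the proof of \cite[Lemma 2.4(i)]{PoSkSujfa2016}, which is exactly the computation you carry out — splitting the inserted perturbation using that $U_j$ commutes with $a_i(U_j,\omega)$, invoking the scalar identities $(z_0-z_1)f^{[1]}(z_0,z_1)=f(z_0)-f(z_1)$ and $(z_1-z_2)f^{[n]}(z_0,\ldots,z_n)=f^{[n-1]}(z_0,z_1,z_3,\ldots,z_n)-f^{[n-1]}(z_0,z_2,z_3,\ldots,z_n)$ via symmetry of divided differences, and collapsing the spectral integral in the variable absent from the symbol. The use of Proposition \ref{new_class}(iii) to ensure $f^{[k]}$ admits a representation \eqref{divdef1} for $k<n$, and of the well-definedness claim in Definition \ref{moi}, covers the bookkeeping, so no gaps.
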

	
The existence of the G\^ateaux derivative in Theorem \ref{derivativeformulaunitarypath} below and the representation \eqref{25} are proved in  \cite[Theorem 5.1.]{Co24}. The continuity of the operator derivative under the assumption $f^{(n)}\in C^n(\T)$ is due to \cite[Corollary 3.6]{ChCoGiPr24}.
The formula \eqref{uniderfor} follows from \eqref{25} and properties of the divided difference.
	
\begin{thm}\label{derivativeformulaunitarypath}
Let $1<p<\infty$. Let $A\in\mathcal{B}_p(\hil)$ be a self-adjoint operator, $U_0$ a unitary operator, and $ U_t=e^{itA}U_0$. Let $n\in\Nat$ and $f\in\Fpt{n}$. Then, the G\^ateaux derivative $\frac{d^n}{dt^n}\big|_{t=s}f( U_t)$ exists in the Schatten $p$-norm and admits the representation
\begin{align}\label{25}
\dfrac{d^n}{dt^n}\Big|_{t=s}f(U_t)			=i^n\sum_{r=1}^{n}~\sum_{\substack{l_1+l_2+\cdots+l_{r}=n\\l_1,l_2,\ldots,l_{r}\geq 1}}~\dfrac{n!}{l_1!\cdots l_r!}\,T_{f^{[r]}}^{ U_s,\dots, U_s}\left(A^{l_1}U_s,\ldots, A^{l_r}U_s\right).
\end{align}
Moreover if $f^{(n)}\in C(\cir)$, then $s\mapsto \frac{d^n}{dt^n}\big|_{t=s}f( U_t)$ is continuous in the Schatten $p$-norm.
In particular, for every $k\in\Nat$,
\begin{align}\label{uniderfor}
\dfrac{d^n}{dt^n}\Big|_{t=s}U_t^k			=\sum_{r=1}^{n}~\sum_{\substack{l_1+l_2+\cdots+l_{r}=n\\l_1,l_2,\ldots,l_{r}\geq 1}}~\dfrac{n!}{l_1!\cdots l_r!}\Bigg[\sum_{\substack{\alpha_0+\alpha_1+\cdots+\alpha_r=k\\\alpha_0\geq 0; \,\alpha_1,\ldots,\alpha_r\geq 1}}U_s^{\alpha_0}(iA)^{l_1}U_s^{\alpha_1}\cdots (iA)^{l_{r}}U_s^{\alpha_r}\Bigg].
\end{align}
\end{thm}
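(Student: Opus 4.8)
The plan is to obtain the first three assertions from the cited literature and to derive the polynomial identity \eqref{uniderfor} by a direct computation with the divided difference of $z\mapsto z^k$. For the existence of the $n$th G\^ateaux derivative in $\|\cdot\|_p$ and the representation \eqref{25}, I would invoke \cite[Theorem 5.1]{Co24}, after checking that every $f\in\Fpt{n}$ satisfies its hypotheses: the representation \eqref{divdef1}--\eqref{divdef1b} of $f^{[n]}$ is precisely the kind of integral-of-separated-symbols decomposition needed to differentiate under the integral sign, and by Proposition \ref{new_class}\eqref{new_class_iii} the same decomposition is available for $f^{[r]}$ with $1\le r\le n$, so every multilinear operator integral on the right-hand side of \eqref{25} is a well-defined bounded map by Definition \ref{moi} and Theorem \ref{moiest}. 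The Fa\`a di Bruno-type bookkeeping that produces the sum over compositions $l_1+\cdots+l_r=n$ is already carried out there, so I would only restate it. For the continuity of $t\mapsto\frac{d^n}{ds^n}\big|_{s=t}f(U_s)$ under the extra hypothesis $f^{(n)}\in C(\cir)$, I would cite \cite[Corollary 3.6]{ChCoGiPr24}; heuristically, each summand of \eqref{25} is continuous in $t$ because $t\mapsto a_i(U_t,\omega)$ is norm-continuous and \eqref{divdef1b} supplies a dominating function, and a finite sum of continuous maps is continuous.

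The substantive step is \eqref{uniderfor}, which I would obtain by specializing \eqref{25} to the trigonometric polynomial $f(z)=z^k$, a member of $\Fpt{n}$. The key elementary fact is that the $r$th divided difference of $z^k$ is the complete homogeneous symmetric polynomial
\[
(z^k)^{[r]}(z_0,\dots,z_r)=\sum_{\substack{\beta_0+\cdots+\beta_r=k-r\\\beta_0,\dots,\beta_r\ge 0}}z_0^{\beta_0}\cdots z_r^{\beta_r},
\]
which follows by induction on $r$ from the recursive definition of $f^{[r]}$ (and is read as $0$ when $k<r$). Inserting this monomial expansion into Definition \ref{moi} with all $U_i=U_s$ and arguments $A^{l_1}U_s,\dots,A^{l_r}U_s$ gives
\begin{align*}
&T_{(z^k)^{[r]}}^{U_s,\dots,U_s}(A^{l_1}U_s,\dots,A^{l_r}U_s)\\
&\qquad=\sum_{\beta_0+\cdots+\beta_r=k-r}U_s^{\beta_0}\,A^{l_1}\,U_s^{\beta_1+1}\,A^{l_2}\cdots A^{l_r}\,U_s^{\beta_r+1},
\end{align*}
and the substitution $\alpha_0=\beta_0$, $\alpha_j=\beta_j+1$ for $1\le j\le r$ converts the constraint $\beta_0+\cdots+\beta_r=k-r$ into $\alpha_0+\cdots+\alpha_r=k$ with $\alpha_0\ge 0$ and $\alpha_1,\dots,\alpha_r\ge 1$. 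Feeding this back into \eqref{25} and distributing $i^n=i^{l_1+\cdots+l_r}$ over the factors $(iA)^{l_j}$ produces \eqref{uniderfor} verbatim.

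I expect the only genuine obstacle to be the verification that the class $\Fpt{n}$ --- together with its divided differences of all orders $\le n$, which again admit representations of the form \eqref{divdef1}--\eqref{divdef1b} by Proposition \ref{new_class}\eqref{new_class_iii} --- meets the hypotheses under which \cite[Theorem 5.1]{Co24} and \cite[Corollary 3.6]{ChCoGiPr24} are stated, together with the purely combinatorial matching of the index sets in the passage from \eqref{25} to \eqref{uniderfor}. Neither point is deep, but both need to be written out with care.
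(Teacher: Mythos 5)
Your proposal is correct and follows essentially the same route as the paper, which likewise obtains existence, \eqref{25}, and the continuity assertion by citing \cite[Theorem 5.1]{Co24} and \cite[Corollary 3.6]{ChCoGiPr24}, and then deduces \eqref{uniderfor} from \eqref{25} and properties of the divided difference. Your explicit computation of $(z^k)^{[r]}$ as the complete homogeneous symmetric polynomial and the reindexing $\alpha_0=\beta_0$, $\alpha_j=\beta_j+1$ is exactly the step the paper leaves implicit, and it is carried out correctly.
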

We will also need the following estimate.
\begin{lma}\label{explip0}
Let $n\in\mathbb{N}$. Let $A, B$ be two self-adjoint operators such that $A-B\in\mathcal{B}_n(\mathcal{H})$. Then $\|e^{iA}-e^{iB}\|_n \leq e^{\max\{\|A\|,\|B\|\}} \|A-B\|_n$.
\end{lma}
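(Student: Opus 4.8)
The plan is to expand the two exponentials as power series and estimate their difference term by term, using the Schatten-ideal H\"older inequality $\|XYZ\|_n\le\|X\|\,\|Y\|_n\,\|Z\|$ together with the elementary telescoping identity $X^k-Y^k=\sum_{j=0}^{k-1}X^j(X-Y)Y^{k-1-j}$. (The asserted estimate is vacuous unless $A$ and $B$ are bounded, which I henceforth assume; an equivalent route through Duhamel's formula is sketched at the end.)

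First I would record that, since $A$ and $B$ are bounded, $e^{iA}-e^{iB}=\sum_{k=1}^{\infty}\frac{i^k}{k!}\,(A^k-B^k)$, with the series converging in operator norm. Write $M:=\max\{\|A\|,\|B\|\}$. Each term $A^k-B^k$ lies in $\mathcal{B}_n(\mathcal{H})$ because $A-B\in\mathcal{B}_n(\mathcal{H})$ and $\mathcal{B}_n(\mathcal{H})$ is a two-sided ideal, and the telescoping identity gives
\begin{align*}
\|A^k-B^k\|_n\le\sum_{j=0}^{k-1}\|A\|^{j}\,\|A-B\|_n\,\|B\|^{k-1-j}\le k\,M^{k-1}\,\|A-B\|_n .
\end{align*}

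Summing these bounds yields $\sum_{k\ge1}\frac{1}{k!}\|A^k-B^k\|_n\le\|A-B\|_n\sum_{k\ge1}\frac{M^{k-1}}{(k-1)!}=e^{M}\,\|A-B\|_n<\infty$, so the series $\sum_k\frac{i^k}{k!}(A^k-B^k)$ converges also in $\mathcal{B}_n(\mathcal{H})$; its sum must coincide with the operator-norm limit $e^{iA}-e^{iB}$, which therefore belongs to $\mathcal{B}_n(\mathcal{H})$, and the triangle inequality in $\mathcal{B}_n(\mathcal{H})$ gives $\|e^{iA}-e^{iB}\|_n\le e^{M}\,\|A-B\|_n$, as claimed.

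Alternatively, one may start from $e^{iA}-e^{iB}=i\int_0^1 e^{itA}(A-B)e^{i(1-t)B}\,dt$, obtained by applying the $\mathcal{B}(\mathcal{H})$-valued fundamental theorem of calculus to $t\mapsto e^{itA}e^{i(1-t)B}$, observe that the integrand is $\mathcal{B}_n$-valued and $\mathcal{B}_n$-continuous in $t$ so the integral may be read as a $\mathcal{B}_n$-valued Bochner integral, and estimate $\|e^{itA}(A-B)e^{i(1-t)B}\|_n\le e^{t\|A\|}e^{(1-t)\|B\|}\|A-B\|_n\le e^{M}\|A-B\|_n$ using convexity of $t\mapsto t\|A\|+(1-t)\|B\|$ on $[0,1]$, then integrate in $t$. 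I do not expect a genuine obstacle in either route: the only thing requiring a line of care is the bookkeeping that guarantees convergence of the relevant series (respectively, integrability of the integrand) in the Schatten norm, so that term-by-term (respectively, under-the-integral) estimation is legitimate and, in particular, $e^{iA}-e^{iB}$ truly lies in $\mathcal{B}_n(\mathcal{H})$.
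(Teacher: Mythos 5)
Your proposal is correct and follows essentially the same route as the paper: expand both exponentials as power series, telescope $A^k-B^k$ into a sum of terms of the form $A^j(A-B)B^{k-1-j}$, bound each in $\mathcal{B}_n(\mathcal{H})$ by the ideal property, and sum to get the factor $e^{\max\{\|A\|,\|B\|\}}$. Your extra care about Schatten-norm convergence of the series (and the Duhamel alternative) is fine but not needed beyond what the paper's one-line argument already contains.
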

\begin{proof}
	By the power series expansions for $e^{iA}$ and $e^{iB}$ and by telescoping, we have
	\begin{align*}
		\|e^{i A}-e^{iB}\|_n\leq \sum_{k=1}^{\infty}\sum_{ p=0}^{k-1}\frac{\|(iA)^p(iA-iB)(iB)^{k-1-p}\|_n}{k!}
		\leq e^{\max\{\|A\|,\|B\|\}}\,\|A-B\|_n.
	\end{align*}
\end{proof}

\begin{lma}\label{lem:moi_rep}
Let $n \in \Nat$, $n\geq 2$, and $ f\in\Fpt{n}$. Let $A\in\mathcal{B}_n(\hil)$ be a self-adjoint operator, $U_0$ a unitary operator, and $U_t = e^{itA}U_0$, $t\in [0,1]$. Then,
			\begin{align}\label{moi_rep}
				f(U_1) - f(U_0) - \sum_{k=1}^{n-1} \frac{1}{k!} \frac{d^k}{ds^k} \Big|_{s=0} f(U_s) = \frac{1}{(n-1)!} \int_{0}^{1} (1-t)^{n-1} \, \frac{d^n}{ds^n} \Big|_{s=t} f(U_s) \, dt,
			\end{align}
			where the integral on the right-hand side converges in the $\|\cdot\|_n$-norm. Moreover, the following properties hold:
\begin{align}
&\label{mes1}\int_{0}^{1} (1-t)^{n-1} \, \frac{d^n}{ds^n} \Big|_{s=t} f(U_s) \, dt\in\boh,\\
&\label{mes2}
t\mapsto(1-t)^{n-1} \, \Tr\left(\frac{d^n}{ds^n} \Big|_{s=t} f(U_s)\right)
\text{is bounded and measurable on } [0,1],\\
&\label{tr_moi_rep}
\Tr\Bigg(f(U_1) - f(U_0) - \sum_{k=1}^{n-1} \frac{1}{k!} \frac{d^k}{ds^k} \Big|_{s=0} f(U_s)\Bigg) = \frac{1}{(n-1)!} \int_{0}^{1} (1-t)^{n-1} \, \Tr\left(\frac{d^n}{ds^n} \Big|_{s=t} f(U_s)\right) \, dt.
\end{align}
\end{lma}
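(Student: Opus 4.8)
The plan is to obtain \eqref{moi_rep} from Taylor's theorem with integral remainder applied to the smooth (in the Schatten $n$-norm) curve $t\mapsto f(U_t)$, and then to justify termwise tracing. First I would record that, by Theorem \ref{derivativeformulaunitarypath}, the map $t\mapsto f(U_t)$ is $n$ times G\^ateaux differentiable in $\|\cdot\|_n$ with derivative given by \eqref{25}, and that since $f\in\Fpt{n}$ implies $f^{(n)}$ is bounded (and, after passing through Proposition \ref{new_class}\eqref{new_class_ii}, $f\in\Fcal_{n-1}(\T)$), the derivative $t\mapsto\frac{d^n}{ds^n}\big|_{s=t}f(U_s)$ is at least bounded and measurable as a $\mathcal{B}_n(\hil)$-valued function; continuity can be arranged by a standard approximation of $f$ by functions with $f^{(n)}\in C(\T)$ (for which Theorem \ref{derivativeformulaunitarypath} gives norm-continuity) but boundedness and measurability suffice for the Bochner integral in \eqref{moi_rep} to exist in $\mathcal{B}_n(\hil)$. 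Then \eqref{moi_rep} is the fundamental theorem of calculus / integration by parts $n$ times applied to a $\mathcal{B}_n(\hil)$-valued $C^{n}$ curve, i.e. the operator-valued Taylor formula, which holds verbatim because Bochner integration commutes with the bounded linear evaluation maps used in the iterated integration by parts.

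The next step is \eqref{mes1}: I would show that each summand $T_{f^{[r]}}^{U_t,\dots,U_t}(A^{l_1}U_t,\dots,A^{l_r}U_t)$ appearing in \eqref{25} lies in $\mathcal{B}_1(\hil)$. This is where the structure of $\Fpt{n}$ and Theorem \ref{moiest} enter: since $l_1+\dots+l_r=n$ and $A\in\mathcal{B}_n(\hil)$, we have $A^{l_j}U_t\in\mathcal{B}_{n/l_j}(\hil)$, and $\frac{l_1}{n}+\dots+\frac{l_r}{n}=1$, so Theorem \ref{moiest}$(i)$ (with $\alpha=1$, handled via the $\alpha>1$ statement by Hölder splitting, or directly by Definition \ref{moi} and Hölder) gives that the multilinear operator integral maps into $\mathcal{B}_1(\hil)$ with norm controlled by $c\,\|f^{(n)}\|_\infty\prod_j\|A^{l_j}U_t\|_{n/l_j}\le c\,\|f^{(n)}\|_\infty\|A\|_n^{\,n}$. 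Summing the finitely many terms and integrating $(1-t)^{n-1}$ over $[0,1]$ keeps us in $\mathcal{B}_1(\hil)$, since $\mathcal{B}_1(\hil)$ is a Banach space and the integrand is a bounded measurable $\mathcal{B}_1(\hil)$-valued function (boundedness of the trace-norm follows from the uniform estimate just displayed; measurability from that of $t\mapsto U_t$ in the strong operator topology together with continuity of the relevant MOI-valued maps). This simultaneously yields \eqref{mes2}, because $\Tr$ is bounded on $\mathcal{B}_1(\hil)$, so $t\mapsto(1-t)^{n-1}\Tr\!\big(\frac{d^n}{ds^n}|_{s=t}f(U_s)\big)$ is a bounded measurable scalar function.

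Finally, \eqref{tr_moi_rep} follows by applying the continuous linear functional $\Tr:\mathcal{B}_1(\hil)\to\mathbb{C}$ to both sides of \eqref{moi_rep}: the left-hand side is a trace-class operator by \eqref{mes1} (the left side of \eqref{moi_rep} equals the right side, which is in $\mathcal{B}_1$), and $\Tr$ commutes with the Bochner integral on the right, giving $\Tr\!\big(\int_0^1(1-t)^{n-1}\frac{d^n}{ds^n}|_{s=t}f(U_s)\,dt\big)=\int_0^1(1-t)^{n-1}\Tr\!\big(\frac{d^n}{ds^n}|_{s=t}f(U_s)\big)\,dt$, the scalar integral being well-defined by \eqref{mes2}.

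The main obstacle I anticipate is the regularity bookkeeping for the Bochner integral: one must be careful that $t\mapsto\frac{d^n}{ds^n}|_{s=t}f(U_s)$ is genuinely $\mathcal{B}_n$-valued \emph{measurable} (not merely that each term has bounded norm) before invoking Taylor's formula, and that the derivative formula \eqref{25} and the validity of iterated integration by parts actually hold for all of $\Fpt{n}$ rather than just for the dense subclass with $f^{(n)}\in C(\T)$. I would handle this by first proving everything for $f$ with $f^{(n)}\in C(\T)$ using the norm-continuity statement in Theorem \ref{derivativeformulaunitarypath}, and then extending to general $f\in\Fpt{n}$ by approximating in the $\|\cdot\|_{\pt{n}}$-norm (using that $\|T_{f^{[k]}}^{\cdots}\|\le\|f^{[k]}\|_{\pt{k}}$ and Proposition \ref{new_class}\eqref{new_class_iii}) and passing to the limit in both \eqref{moi_rep} and \eqref{tr_moi_rep}, the trace-class bounds being uniform under such approximation.
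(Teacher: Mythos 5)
Your proposal is correct and takes essentially the same route as the paper: the paper likewise combines Theorem \ref{derivativeformulaunitarypath} with the scalar Taylor formula applied after dualizing with functionals $\psi\in(\bnh)^*$ (your ``Bochner integration commutes with the bounded evaluation maps''), obtains the trace-norm bound $\bigl\|(1-t)^{n-1}\tfrac{d^n}{ds^n}\big|_{s=t}f(U_s)\bigr\|_1\le d_n\max_{1\le k\le n}\|f^{[k]}\|_{\pt{k}}\|A\|_n^n$ from \eqref{25} together with Definition \ref{moi} and H\"older (your ``directly by Definition \ref{moi}'' alternative --- note Theorem \ref{moiest} itself excludes $\alpha=1$, so that is the route to use), and settles \eqref{mes1}--\eqref{tr_moi_rep} via weak measurability plus \cite{Yosida} and \cite{AzCaDoSu09}. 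In particular, no continuity of the $n$th derivative and no approximation by functions with $f^{(n)}\in C(\T)$ is needed, exactly as your ``bounded and measurable suffices'' fallback anticipates.
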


\begin{proof}
Let $f\in\Fpt{n}$. For $t\in [0,1]$, define $\Gamma(t)=f(U_t)-f(U_0)$. By Lemma \ref{algebraicprop}, $\Gamma(t)=T_{f^{[1]}}^{U_t, U_0}(U_t-U_0)$, which along with observations made in Definition \ref{moi}, Lemma \ref{explip0}, implies
\begin{align*}
	\|\Gamma(t)\|_n\leq \|f^{[1]}\|_{\pt{1}} \|U_t-U_0\|_n=&\|f^{[1]}\|_{\pt{1}} \|e^{itA}-I\|_n\leq \|f^{[1]}\|_{\pt{1}}\,e^{\|A\|}\|A\|_n<\infty.
\end{align*}
Therefore, by Theorem \ref{derivativeformulaunitarypath}, $[0,1]\ni t\mapsto \Gamma(t)\in\bnh$ is $n$-times differentiable in the norm $\|\cdot\|_n$ and $\frac{d^k}{ds^k}\big|_{s=t}\Gamma(s)=\frac{d^k}{ds^k}\big|_{s=t}f(U_s)$ for $k=1,\dots, n$. Let $\psi\in (\bnh)^*$. Then, $[0,1]\ni t\mapsto \psi(\Gamma(t))$ is also $n$-times differentiable and
\begin{align*}
	\frac{d^k}{ds^k}\Big|_{s=t} \psi(\Gamma(s))= \psi\left( \frac{d^k}{ds^k}\Big|_{s=t}\Gamma(s)\right)=\psi\left( \frac{d^k}{ds^k}\Big|_{s=t}f(U_s)\right)\quad 1\leq k\leq n.
\end{align*}

It follows from \eqref{25} and Definition \ref{moi} that
\begin{align*}
	\left\| (1-t)^{n-1}\frac{d^n}{ds^n} \Big|_{s=t} f(U_s) \right\|_n \leq d_n \max_{1\leq k \leq n}\|f^{[k]}\|_{\pt{k}} \|A\|_n\,\|A\|^{n-1}
\end{align*}
for some positive constant $d_n$. Consequently,
\begin{align*}
	\left| (1-t)^{n-1}\frac{d^k}{ds^k}\Big|_{s=t} \psi(\Gamma(s)) \right| \leq d_n\, \|\psi\|\max_{1\leq k \leq n}\|f^{[k]}\|_{\pt{k}} \|A\|_n\,\|A\|^{n-1}.
\end{align*}
Applying the fundamental theorem of calculus to the function $[0,1]\ni t\mapsto \psi(\Gamma(t))$  and integrating by parts yields
\begin{align*}
	\psi\left(\Gamma(1)-\Gamma(0) - \sum_{k=1}^{n-1} \frac{1}{k!} \frac{d^k}{ds^k} \Big|_{s=0} \Gamma(s)\right)
	=\frac{1}{(n-1)!} \int_{0}^{1} (1-t)^{n-1}\,\frac{d^n}{ds^n}\Big|_{s=t}{\psi(\Gamma(s))}\,dt.
\end{align*}
Consequently,
\begin{align}
\label{psirem}
	\psi\left(f(U_1) - f(U_0) - \sum_{k=1}^{n-1} \frac{1}{k!} \frac{d^k}{ds^k} \Big|_{s=0} f(U_s)\right)
	=\frac{1}{(n-1)!} \int_{0}^{1} (1-t)^{n-1} \, \psi\left(\frac{d^n}{ds^n} \Big|_{s=t} f(U_s)\right) \, dt.
\end{align}
Observe that
$[0,1]\ni t \mapsto (1-t)^{n-1}\frac{d^n}{ds^n} \big|_{s=t} f(U_s)\in \mathcal{B}_n(\hil)$ is strongly Borel-measurable (see \cite[Definition V.4.1 (p. 130)]{Yosida}). Therefore, by \cite[Theorem V.5.1 (p. 133)]{Yosida},
\begin{align*}
	\int_{0}^{1} (1-t)^{n-1} \, \frac{d^n}{ds^n} \Big|_{s=t} f(U_s) \, dt
\end{align*}
exists in the $\|\cdot\|_n$-norm and, consequently,
\begin{align*}
	\psi\left(f(U_1) - f(U_0) - \sum_{k=1}^{n-1} \frac{1}{k!} \frac{d^k}{ds^k} \Big|_{s=0} f(U_s)\right) = \frac{1}{(n-1)!}\psi\left( \int_{0}^{1} (1-t)^{n-1} \, \frac{d^n}{ds^n} \Big|_{s=t} f(U_s)\, dt\right)
\end{align*}
for every $\psi\in (\bnh)^*$. The latter implies \eqref{moi_rep}.

Applying the representation for the derivative \eqref{25} and Definition \ref{moi} implies
\begin{align*}
	\Big\| (1-t)^{n-1}\frac{d^n}{ds^n} \Big|_{s=t} f(U_s) \Big\|_1 \leq d_n \max_{1\leq k \leq n}\|f^{[k]}\|_{\pt{k}} \|A\|_n^n.
\end{align*}
By a reasoning similar to the one above, $t\mapsto \left\langle(1-t)^{n-1}\frac{d^n}{ds^n}\big|_{s=t} f(U_s)h_1,h_2\right\rangle$ is Borel measurable for all $h_1,h_2\in\mathcal{H}$.
The latter two properties along with \cite[Proposition 3.2 and Lemma 3.10]{AzCaDoSu09} imply \eqref{mes1} and \eqref{mes2} and
\begin{align*}
\Tr\Big(\int_{0}^{1}(1-t)^{n-1}\,\frac{d^n}{ds^n}\Big|_{s=t}f(U_s)\,dt\Big)
=\int_{0}^{1}(1-t)^{n-1}\,\Tr\Big(\frac{d^n}{ds^n}\Big|_{s=t}f(U_s)\Big)\,dt.
\end{align*}
Combining the latter with \eqref{moi_rep} implies \eqref{tr_moi_rep}.
\end{proof}

\paragraph{\bf {Sch\"affer's unitary matrix dilation.}}
 Let $\hil$ be a Hilbert space, $\ell_2(\hil)=\oplus_{1}^{\infty}\hil$, and let $\mathcal{K}$ be a Hilbert space containing $\hil$ as a closed subspace.
We recall that a power unitary dilation of a contraction
$T\in\mathcal{B}(\hil)$ is a unitary $U\in\mathcal{B}(\mathcal{K})$ satisfying
$T^n=P\,U^n\upharpoonright_\hil$ for every $n\in\Nat$, where $P$ is the orthogonal projection from $\mathcal{K}$ onto $\hil$. We will use a power unitary dilation of a contraction explicitly constructed in \cite{schaffer} and summarized below.

Let $U_{T}$ be the unitary operator on $\ell_2(\hil)\oplus \hil\oplus \ell_2(\hil)$  with block matrix representation
\begin{align}\label{schafferdil}
	U_{T}=\begin{blockarray}{ccccc}
		\ell_2(\hil) & \hil & \ell_2(\hil) &  \\[4pt]
		\begin{block}{[ccc]cc}
			S^*& 0& 0 & \ell_2(\hil) \\[3pt]
			D_{T^*}P_\hil& T& 0 & \hil\\[3pt]
			-T^*P_\hil& D_{T}& S& \ell_2(\hil)\\[3pt]
		\end{block}
	\end{blockarray},
\end{align}
where $S$ is the unilateral shift on $\ell_2(\hil)$ given by $$S(h_1, h_2, \ldots,)=(0,h_1, h_2, \ldots),\qquad h_i\in\hil ,$$  $D_{T}=(1-T^*T)^{1/2}$,  $D_{T^*}=(1-TT^*)^{1/2}$  are the defect operators corresponding to the contractions $T$ and $T^*$, respectively,  and $P_\hil$ is the orthogonal projection from $\ell_2(\hil)$ onto $\hil\oplus 0\oplus0\oplus\cdots$. Then $U_T$ is a power unitary dilation of $T$, known as the Sch\"{a}ffer unitary matrix dilation of $T$.

Further details on dilation theory can be found in \cite{NFbook} and examples of its application in derivation of trace formulas can be found in \cite{AcSdCp24,DySk14,Mor,MNP19}.

	
\section{Higher order trace formulas for contractions}
\label{trmultpath}
	
In this section we establish higher order trace formulas for functions of contractions  $T_s$ without imposing the stringent assumptions of \cite[Theorem 4.1]{Ch_Pr_NYJM} that $\dim \ker(T_s)=\dim \ker(T_s^*)$ and that the defect operator  of $T_s$
belongs to $\bnh$.
The trace formulas in Theorem \ref{extcontracmulti} involve integration over $\cir$ and in Theorem \ref{Helt1} over $\mathbb{D}$.
	
We start by broadening the set of admissible functions satisfying \eqref{trfor} to include the set $\mathcal{P}_n(\cir)$ of polynomials of degree at most $n-1$ and by establishing the absolute continuity of the spectral shift measures in \cite[Remark 4.7(ii)]{PoSkSujfa2016}. Both goals are achieved in Theorem \ref{extuniatry} below.
	
	\begin{Not}\label{not_mult_uni}
		Let $n\in\Nat$, $n\geq 2$. Let $U_0$ be a unitary operator, $A$ a bounded self-adjoint operator on $\hil$, and $$U_s=e^{isA}U_0,\quad s\in [0,1].$$ Define
		\begin{align}
			\label{remua}
			\mathcal{R}_{n}^{\mult}(U_0,A,f)
			:=f(U_1)-f(U_0)-\sum_{k=1}^{n-1}\dfrac{1}{k!}\dfrac{d^k}{ds^k}\Big|_{s=0}f(U_s).
		\end{align}
	\end{Not}

\begin{thm}\label{extuniatry}
Assume Notations \ref{not_mult_uni} and assume that $A\in\mathcal{B}_n(\hil)$.
Then, $\mathcal{R}_{n}^{\mult}(U_0,A,f)\in \mathcal{B}_1(\hil)$ for every $f\in\Fpt{n}$. Furthermore, there exist a constant $d_{n}>0$ and functions $\eta_n\in  L^1(\cir,\dm)$ and $\eta_k\in{\rm span}\{ \overline{z},\dots,\bar{z}^{\,n-k}\}$, $k=1,\dots,n-1$, satisfying
		$$\|\eta_k\|_1\leq d_{n}\|A\|_n^n,\quad k=1,\dots,n,$$ such that \begin{align}\label{trformulaunigen}
			\Tr\left(\mathcal{R}_{n}^{\mult}(U_0,A,f) \right)=\sum_{k=1}^{n}\int_{\cir}f^{(k)}(z)\eta_k(z)dz
		\end{align}
		for every $f\in\Fpt{n}$. Moreover, if $\hat f(k)=0$ for $k=1,\dots,n-1$, then \eqref{trformulaunigen} holds with $\eta_k=0$ for $k=1,\dots,n-1$.
	\end{thm}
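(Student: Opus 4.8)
The plan is to combine the integral form of the Taylor remainder from Lemma~\ref{lem:moi_rep} with a splitting of $f$ into a ``high-frequency'' part covered by Theorem~\ref{skripkaunitaryadv} and a ``low-frequency'' polynomial part handled by hand. Write $L(f):=\Tr\big(\mathcal{R}_{n}^{\mult}(U_0,A,f)\big)$. Since $n\ge2$ and $A\in\bnh$, Lemma~\ref{lem:moi_rep} applies to the present $U_0,A$ and yields, for every $f\in\Fpt{n}$, that $\mathcal{R}_{n}^{\mult}(U_0,A,f)\in\boh$ (by \eqref{moi_rep} and \eqref{mes1}) and, by \eqref{tr_moi_rep},
\begin{align*}
L(f)=\frac{1}{(n-1)!}\int_{0}^{1}(1-t)^{n-1}\,\Tr\Big(\tfrac{d^n}{ds^n}\big|_{s=t}f(U_s)\Big)\,dt .
\end{align*}
First I would extract, from this identity together with \eqref{25}, Definition~\ref{moi} and H\"older's inequality (the computation is already inside the proof of Lemma~\ref{lem:moi_rep}), the uniform bound $|L(f)|\le d_n\big(\max_{1\le k\le n}\|f^{[k]}\|_{\pt{k}}\big)\|A\|_n^n$ with $d_n$ depending only on $n$; the right-hand norms are finite because $f\in\Fpt{n}=\bigcap_{k=1}^{n}\Fpt{k}$ by Proposition~\ref{new_class}. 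In particular $L$ is a continuous linear functional on $\Fpt{n}$, and, using $\|(z^m)^{[k]}\|_{\pt{k}}\le\binom{m}{k}$, one gets $|L(z^m)|\le d_n2^{\,n-1}\|A\|_n^n$ for $1\le m\le n-1$.

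Next I would treat $f\in\Fpt{n}$ with $\hat f(k)=0$ for $k=1,\dots,n-1$. Every trigonometric polynomial lies in $\Fcal_n(\cir)$, so Theorem~\ref{skripkaunitaryadv} applied to the present $U_0,A$ gives a single $\eta_n=\eta_{n,U_0,A}\in L^1(\cir,\dm)$ with $\|\eta_n\|_1\le c_n\|A\|_n^n$ satisfying $L(p)=\int_{\cir}p^{(n)}\,\eta_n\,dz$ for every trigonometric polynomial $p$ with $\hat p(k)=0$, $k=1,\dots,n-1$. To pass to all such $f$ I would approximate by Fej\'er means $\sigma_N f$, which remain trigonometric polynomials with $\widehat{\sigma_N f}(k)=0$ for $k=1,\dots,n-1$; from the divided-difference convolution identity
\begin{align*}
(\sigma_N f)^{[k]}(z_0,\dots,z_k)=\int_{\cir}\bar w^{\,k}\,f^{[k]}(z_0\bar w,\dots,z_k\bar w)\,K_N(w)\,d\dm(w)
\end{align*}
(with $K_N$ the nonnegative Fej\'er kernel of unit mass) one gets $\|(\sigma_N f)^{[k]}\|_{\pt{k}}\le\|f^{[k]}\|_{\pt{k}}$, $\|(\sigma_N f)^{(n)}\|_\infty\le\|f^{(n)}\|_\infty$, and $(\sigma_N f)^{(n)}\to f^{(n)}$ a.e. Feeding these into the integral representation of $L$ and using dominated convergence in $t$ on the left, dominated convergence against $\eta_n\in L^1$ on the right, and continuity of the multilinear operator integrals $T_{(\cdot)^{[k]}}^{U_t,\dots,U_t}$ under this symbol averaging, one concludes $L(f)=\int_{\cir}f^{(n)}\,\eta_n\,dz$.

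Then I would assemble the general case. Put
\begin{align*}
\eta_1(z):=\sum_{m=1}^{n-1}\frac{L(z^m)}{2\pi i\,m}\,\bar z^{\,m}\ \in\ \operatorname{span}\{\bar z,\dots,\bar z^{\,n-1}\},\qquad\eta_k:=0\ \ (2\le k\le n-1),
\end{align*}
so $\|\eta_1\|_1\le\sum_{m=1}^{n-1}\tfrac{|L(z^m)|}{2\pi m}\le d_n'\,\|A\|_n^n$ by the bound on $L(z^m)$, and the bounds on $\eta_2,\dots,\eta_{n-1}$ are trivial. For arbitrary $f\in\Fpt{n}$, set $p(z)=\sum_{m=1}^{n-1}\hat f(m)z^m$ and $g=f-p$; then $g\in\Fpt{n}$, $\hat g(k)=0$ for $k=1,\dots,n-1$, and $p^{(n)}=0$ because $m(m-1)\cdots(m-n+1)=0$ for $1\le m\le n-1$, whence $g^{(n)}=f^{(n)}$. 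Using linearity of $\mathcal{R}_{n}^{\mult}(U_0,A,\cdot)$ and of $\Tr$, the previous paragraph applied to $g$, and the orthogonality identity $\int_{\cir}f^{(1)}\eta_1\,dz=\sum_{m=1}^{n-1}\hat f(m)\,L(z^m)$,
\begin{align*}
L(f)=L(g)+\sum_{m=1}^{n-1}\hat f(m)\,L(z^m)=\int_{\cir}f^{(n)}\eta_n\,dz+\int_{\cir}f^{(1)}\eta_1\,dz=\sum_{k=1}^{n}\int_{\cir}f^{(k)}\eta_k\,dz,
\end{align*}
which is \eqref{trformulaunigen}; choosing $d_n$ to dominate $c_n$ and $d_n'$ gives the claimed bounds. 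If in addition $\hat f(k)=0$ for $k=1,\dots,n-1$, then $p=0$ and $\int_{\cir}f^{(1)}\eta_1\,dz=0$, so \eqref{trformulaunigen} holds with $\eta_k=0$ for $k=1,\dots,n-1$. (In this argument $\eta_2=\dots=\eta_{n-1}=0$; I keep the statement's generality for uniformity with the contractive case of Theorem~\ref{extcontracmulti}.)

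The hard part will be the limit passage in the second step, i.e.\ the continuity of $L$ along Fej\'er means — equivalently $T_{(\sigma_N f)^{[k]}}^{U_t,\dots,U_t}\to T_{f^{[k]}}^{U_t,\dots,U_t}$ — which I would derive from the averaging identity above together with continuity of the multilinear operator integral in its unitary arguments (cf.\ \cite{ChCoGiPr24} and the sources underlying Definition~\ref{moi} and Theorem~\ref{moiest}). An alternative avoiding this approximation is to rerun the proof of \cite[Theorem~4.4]{Skadv2017} with the $\Fpt{n}$-versions of Theorem~\ref{moiest} and Corollary~\ref{useofcyclicity} replacing their $\Fcal_n$-analogues; the essential obstruction — uniform control of the relevant multilinear operator integrals over $\Fpt{n}$ — is unchanged.
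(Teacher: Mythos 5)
Your overall frame — the integral Taylor remainder from Lemma~\ref{lem:moi_rep}, the splitting $\Fpt{n}=\mathcal{P}_n(\cir)+\tFpt{n}$ made possible by $\Fpt{n}\subset\Fcal_{n-1}(\T)$, and a separate treatment of the low-frequency polynomial part — is the same as the paper's. Your polynomial step is correct and in fact lighter than the paper's: you absorb the whole low-frequency correction into a single $\eta_1\in{\rm span}\{\bar z,\dots,\bar z^{\,n-1}\}$ built from the numbers $L(z^m)$, with the orthogonality identity $\int_\T f'\eta_1\,dz=\sum_{m=1}^{n-1}\hat f(m)L(z^m)$ and the bound on $|L(z^m)|$ both checking out, whereas the paper constructs each $\eta_k\in{\rm span}\{\bar z,\dots,\bar z^{\,n-k}\}$ via Riesz representation in $L^2(\T,\dm)$ of functionals read off from \eqref{25}, using Jackson's inequality to convert the sup-norm bound of Corollary~\ref{useofcyclicity} into an $L^2$ bound. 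Since the theorem only asserts existence of such $\eta_k$, either choice is admissible.

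The genuine gap is in the central case, namely \eqref{trformulaunigen} with $\eta_1=\dots=\eta_{n-1}=0$ on $\tFpt{n}$, and your primary route does not prove it. Theorem~\ref{skripkaunitaryadv} only covers $\Fcal_n(\T)$, so you must pass from trigonometric polynomials to general $f\in\tFpt{n}$; for the left-hand side your Fej\'er argument reduces, via the rotation identity you state, to continuity at $w=1$ of $w\mapsto\Tr\big(T_{f^{[r]}}^{\bar wU_t,\dots,\bar wU_t}(A^{l_1}U_t,\dots,A^{l_r}U_t)\big)$, in particular for the critical order $r=n$. For $f\in\Fpt{n}$ the derivative $f^{(n)}$ is merely bounded, and the continuity results you invoke (those behind Theorem~\ref{derivativeformulaunitarypath}, i.e.\ \cite{ChCoGiPr24}) require $f^{(n)}\in C(\T)$ — precisely what you cannot assume; so the ``hard part'' you flag is a missing idea, not a deferred routine check. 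Your fallback — rerunning the proof of \cite[Theorem 4.4]{Skadv2017} with Theorem~\ref{moiest}, Corollary~\ref{useofcyclicity} and \eqref{tr_moi_rep} replacing their $\Fcal_n$-analogues — is exactly what the paper does, and, contrary to your closing remark, there is no remaining obstruction there: those statements are already formulated for $f\in\Fpt{n}$ and yield the uniform bound $c_n\|f^{(n)}\|_\infty\|A\|_n^n$ on the trace of every term of \eqref{25}, which is all the argument of \cite{Skadv2017} uses; its measure-theoretic and approximation steps do not care which function class supplies the symbols. As written, then, the core case of your proof rests on an unproven MOI-continuity claim; either supply that convergence under Fej\'er averaging or carry out the rerun explicitly.
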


	\begin{proof}
Denote $$\tFpt{n}:=\{f\in\Fpt{n}: \hat{f}(k)=0 \text{ for } k=1,\dots,n-1\}.$$
The representation \eqref{trformulaunigen} with $\eta_1=\dots=\eta_{n-1}=0$ for $f\in \tFpt{n}$ can be established along the same lines as the proof of \cite[Theorem 4.4]{Skadv2017} by applying Theorem \ref{moiest},
Corollary \ref{useofcyclicity}, and the integral representation for the Taylor remainder \eqref{tr_moi_rep}.

We note that the proof of \cite[Theorem 4.4]{Skadv2017} follows the proof of \cite[Theorem 4.1]{PoSkSujfa2016} and uses the representation \cite[(4.12)]{PoSkSujfa2016} for functions $f\in\Fcal_n(\mathbb{T})$ satisfying $\int_0^{2\pi}f^{(k)}(e^{it})\,dt=0$ for all $k=1,\dots,n-1$. The functions not satisfying the latter condition should be treated differently. Below we present the proof of \eqref{trformulaunigen} for $f\in\Fpt{n}$.
		
		Firstly, we establish
		\begin{align}\label{a3}
			\Tr\left(\mathcal{R}_{n}^{\mult}(U_0,A,f) \right) =\sum_{k=1}^{n-1}\int_{\cir}\,f^{(k)}(z)\,\eta_k(z)\,dz
		\end{align} for every $f\in\mathcal{P}_n(\cir)$.
		For every $f\in \mathcal{P}_n(\cir)$, by Lemma \ref{lem:moi_rep}, we have the representations
		\begin{align}\label{new1}
			\mathcal{R}_{n}^{\mult}(U_0,A,f)
			=\dfrac{1}{(n-1)!}\int_{0}^{1}(1-t)^{ n-1}\,\dfrac{d^n}{ds^n}\Big|_{s=t}f(U_s)\,dt,		\end{align}
 and
 \begin{align}\label{a1}
			\nonumber&\Tr\left(\mathcal{R}_{n}^{\mult}(U_0,A,f) \right)\\			=&\,\dfrac{(i)^n}{(n-1)!}\int_{0}^{1}(1-t)^{ n-1}\,\sum_{k=1}^{n-1}\,\sum_{\substack{j_1+\cdots+j_k=n\\j_1,\ldots,j_k\geq 1}}\,\dfrac{n!}{j_1!\cdots j_k!}\Tr\Big(T_{f^{[k]}}^{ U_t,\ldots,U_t}( A^{j_1}U_t,\ldots,A^{j_k}U_t)\Big)dt.
		\end{align}
		For every $k=1,\dots,n-1$, consider the linear functional $T_k$ on $L^2(\cir,\dm)$ given by
		\begin{align}
			\label{aa}
			\begin{cases}
				T_k((z^l)^{(k)})
				=\dfrac{(i)^n}{(n-1)!}\int_0^1(1-t)^{ n-1}\!\sum\limits_{\substack{j_1+\cdots+j_k=n\\j_1,\ldots,j_k\geq 1}}\,\dfrac{n!}{j_1!\cdots j_k!}\Tr\Big(T_{(z^l)^{[k]}}^{U_t,\ldots, U_t}(A^{j_1}U_t,\ldots,A^{j_k}U_t)\Big)dt\\
				\hspace*{4in}\text{ for } l=k,\dots,n-1, \\
				T_k(z^q)=0 \hspace*{3.25in}\text{ for } q\in\mathbb{Z}\setminus\{0,\ldots,n-k-1\}.
			\end{cases}
		\end{align}
		Let $p(z)$ be a polynomial of degree at most $n-1$. By Corollary \ref{useofcyclicity} and linearity of $T_k$,
		\begin{align}
			\label{aa1}
			|T_k(p^{(k)})|\leq {\tilde{d}}_{k,n}\|p^{(k)}\|_{L^\infty(\cir)}\|A\|_n^n.
		\end{align}
		Consider the trigonometric polynomial $\tilde{p}(\theta)=p^{(k)}(e^{i\theta})$ of degree at most $n-k-1$.
		By Jackson's inequality \cite[Ch.~5, (3.1.1), p.~495]{MMR},
		$\|\tilde p\|_{L^\infty[0,2\pi]}\le
		2\sqrt{n-k-1}\Big(\int_0^{2\pi}|p(\theta)|^2\,d\theta\Big)^{\frac12}$.
		Hence,
		\begin{align}
			\label{aa2}
			\|p^{(k)}\|_{L^\infty(\cir)}\le
			2\sqrt{2\pi}\sqrt{n-k-1}\,\|p^{(k)}\|_{L^2(\cir,\dm)}.
		\end{align}
		Combining \eqref{aa}, \eqref{aa1}, \eqref{aa2} implies
		\begin{align}
			\label{aa3}
			\|T_k\|\leq d_{k,n}\|A\|_n^n.
		\end{align}
		
		Therefore, by the Riesz representation theorem for the functionals in $(L^2(\cir,\dm))^*$,  it follows from \eqref{a1}, \eqref{aa}, \eqref{aa3} that there exist functions $\tilde{\eta}_k \in L^1(\cir,\dm)$, $k=1,\dots,n-1$, satisfying		
		\[\|{\tilde{\eta}}_k\|_1\leq d_{k,n}\|A\|^n_n\] and
		\begin{align}\label{aaa}
			\int_{\cir}z^l\, \overline{z}\,{\tilde{\eta}}_k(z)\,dz=0
		\end{align}
		for all $l\in\mathbb{Z}\setminus\{0,\ldots, n-k-1\}$ and such that \eqref{a3} holds for every $f\in\mathcal{P}_n(\cir)$, where
		\begin{align*}
			\eta_k(z)=\frac{1}{2\pi i}\,\bar{z}\,\tilde{\eta}_k(z),\quad k=1,\dots,n-1.
		\end{align*}
		It follows from \eqref{aaa} that
		\begin{align*}
			\tilde\eta_k\in\{\dots,z^{-n},\dots,z^{-n+k},z,\dots,z^n,\dots\}^\perp={\rm span}\{1,\dots,\overline{z}^{\,n-k-1}\}.
		\end{align*}
		
Since $\Fpt{n}\subset\Fcal_{n-1}(\T)$ (see Proposition \ref{new_class}\eqref{new_class_ii}), we obtain $\Fpt{n}=\mathcal{P}_n(\cir)+\tFpt{n}$. Given $f\in\Fpt{n}$, let $p\in\Fpt{n}$ and $g\in\tFpt{n}$ be such that $f=p+g$.
		It follows from the equations \eqref{aaa} and \eqref{a3} that
		\begin{align}\label{a4}
			\Tr\left(\mathcal{R}_{n}^{\mult}(U_0,A,p) \right) =\sum_{k=1}^{n-1}\int_{\cir}\,p^{(k)}(z)\,\eta_k(z)\,dz
			=\sum_{k=1}^{n-1}\int_{\cir}\,f^{(k)}(z)\,\eta_k(z)\,dz.
		\end{align}
		By the linearity of the trace, from the equations \eqref{trfor} and \eqref{a4}, we conclude that
		\begin{align*}
			\Tr\left(\mathcal{R}_{n}^{\mult}(U_0,A,f) \right)
			=&\Tr\left(\mathcal{R}_{n}^{\mult}(U_0,A,p) \right)+\Tr\left(\mathcal{R}_{n}^{\mult}(U_0,A,g) \right)\\
			=&\sum_{k=1}^{n-1}\int_{\cir}f^{(k)}(z)\eta_k(z)\,dz + \int_{\cir}g^{(n)}(z)\eta_n(z)\,dz\\
			=&\sum_{k=1}^{n-1}\int_{\cir}f^{(k)}(z)\eta_k(z)\,dz + \int_{\cir}f^{(n)}(z)\eta_n(z)\,dz \hspace*{.5in}(\,\text{since } f^{(n)}=g^{(n)}),
		\end{align*}
		completing the proof of \eqref{trformulaunigen}.
	\end{proof}
	
	Next, we extend the trace formula \eqref{trformulaunigen} to pairs of contractions by utilizing the result of Theorem \ref{extuniatry}, Sch\"affer unitary matrix dilation, and the following observation. 
	
	\begin{lma}\label{traingulartrace}
		Let $\hil_i, i=1,2,3$, be separable Hilbert spaces. Let $T\in\mathcal{B}_1(\hil_1\oplus\hil_2\oplus\hil_3)$ have the block matrix representation of the form
\begin{align*}
T=\begin{blockarray}{ccccc}
\hil_1 & \hil_2 & \hil_3 & \\[4pt]
\begin{block}{[ccc]cc}
P_{\hil_1}TP_{\hil_1}&  P_{\hil_1}TP_{\hil_2}&  P_{\hil_1}TP_{\hil_3} & \hil_1 \\[3pt]
P_{\hil_2}TP_{\hil_1}& P_{\hil_2}TP_{\hil_2}& P_{\hil_2}TP_{\hil_3} & \hil_2\\[3pt]
P_{\hil_3}TP_{\hil_1}&P_{\hil_3}TP_{\hil_2}& P_{\hil_3}TP_{\hil_3}& \hil_3\\[3pt]
\end{block}
\end{blockarray},
\end{align*}
where $P_{\hil_1}, P_{\hil_2}, P_{\hil_3}$ are orthogonal projections of $\hil_1\oplus\hil_2\oplus\hil_3$ onto $\hil_1\oplus 0\oplus 0$, $0\oplus \hil_2\oplus 0$, and $0\oplus 0\oplus \hil_3$, respectively. Then
		\begin{align}\label{blocktraceid}
			\Tr(T)= \Tr \left( P_{\hil_1}TP_{\hil_1}\right)+\Tr \left( P_{\hil_2}TP_{\hil_2}\right)+ \Tr \left(P_{\hil_3}TP_{\hil_3}\right).
		\end{align}
	\end{lma}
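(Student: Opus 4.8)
The plan is to reduce the statement to the elementary fact that for a trace-class operator the trace equals the sum of diagonal entries with respect to any orthonormal basis, which may be grouped according to an orthogonal decomposition of the underlying Hilbert space. Concretely, write $\hil=\hil_1\oplus\hil_2\oplus\hil_3$ and fix orthonormal bases $\{e^{(i)}_j\}_j$ of $\hil_i$ for $i=1,2,3$; their union is an orthonormal basis of $\hil$. Since $T\in\mathcal{B}_1(\hil)$, the series $\Tr(T)=\sum_{i=1}^{3}\sum_j\langle T e^{(i)}_j,e^{(i)}_j\rangle$ converges absolutely and is independent of the chosen basis.

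The key observation is then that $\langle T e^{(i)}_j,e^{(i)}_j\rangle=\langle T P_{\hil_i} e^{(i)}_j, P_{\hil_i} e^{(i)}_j\rangle=\langle P_{\hil_i} T P_{\hil_i} e^{(i)}_j, e^{(i)}_j\rangle$, because $P_{\hil_i}$ is the orthogonal projection onto the $i$th summand and $P_{\hil_i}e^{(i)}_j=e^{(i)}_j$ while $P_{\hil_i}e^{(i')}_j=0$ for $i'\neq i$. Hence for each $i$ the inner block sum equals $\sum_j\langle P_{\hil_i}TP_{\hil_i}e^{(i)}_j,e^{(i)}_j\rangle$, which is exactly $\Tr(P_{\hil_i}TP_{\hil_i})$ — the trace of $P_{\hil_i}TP_{\hil_i}$ viewed as an operator on $\hil_i$ (equivalently on $\hil$, since the off-summand diagonal entries vanish). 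One should note in passing that each $P_{\hil_i}TP_{\hil_i}$ is trace class because $\mathcal{B}_1(\hil)$ is an ideal and $\|P_{\hil_i}\|\le 1$. Summing over $i=1,2,3$ gives \eqref{blocktraceid}.

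I would present this in two short steps: first record that each compressed block $P_{\hil_i}TP_{\hil_i}$ lies in $\mathcal{B}_1(\hil)$ and that the three block traces are well defined; second, evaluate $\Tr(T)$ via a basis adapted to the decomposition and regroup. There is essentially no obstacle here — the only mild care needed is the standard justification that the trace is basis-independent and that rearranging the absolutely convergent double series into the three group-sums is legitimate, which is immediate from absolute convergence. The lemma is purely a bookkeeping device (it will be applied to the $3\times 3$ block structure coming from Sch\"affer's dilation, where the relevant operator is a Taylor remainder already known to be trace class), so the proof should be no more than a few lines invoking the defining property of $\Tr$ on $\mathcal{B}_1(\hil)$ and the projection identities above.
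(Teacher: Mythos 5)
Your proof is correct, but it takes a slightly different route from the paper. The paper writes $I=P_{\hil_1}+P_{\hil_2}+P_{\hil_3}$, so that $\Tr(T)=\sum_{i,j=1}^{3}\Tr(P_{\hil_i}TP_{\hil_j})$ by linearity, and then kills the six off-diagonal blocks by cyclicity of the trace together with orthogonality of the projections: $\Tr(P_{\hil_i}TP_{\hil_j})=\Tr(P_{\hil_j}P_{\hil_i}T)=0$ for $i\neq j$. You instead evaluate $\Tr(T)$ as the absolutely convergent sum of diagonal matrix entries in an orthonormal basis adapted to the decomposition $\hil_1\oplus\hil_2\oplus\hil_3$, observe that $\langle Te^{(i)}_j,e^{(i)}_j\rangle=\langle P_{\hil_i}TP_{\hil_i}e^{(i)}_j,e^{(i)}_j\rangle$, and regroup. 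Both arguments are a few lines and fully rigorous; yours trades the algebraic cyclicity step for the basis-independence of the trace plus absolute convergence of the diagonal series (which you correctly justify), and it also makes explicit the minor point, implicit in the paper, that each compression $P_{\hil_i}TP_{\hil_i}$ is trace class and that its trace is the same whether computed on $\hil$ or on $\hil_i$. There is no gap in your argument.
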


	\begin{proof}
		Since
		$\Tr(T)= \sum_{i,j=1}^{3}\Tr \left(	P_{\hil_i}TP_{\hil_j}\right)$,
		\eqref{blocktraceid} follows from the pairwise orthogonality of $P_{\hil_i}$ and cyclicity of the trace.
	\end{proof}

Let $f\in\Fpt{n}$ and define
$$f_{+}(e^{it}):=\sum\limits_{k=0}^{\infty}\hat{f}(k)e^{ikt}\, \text{ and }\; f_{-}(e^{it}):=\sum\limits_{k=1}^{\infty}\hat{f}(-k)e^{ikt},$$
where the series converge absolutely by  Proposition \ref{new_class}\eqref{new_class_ii}. Note that $f(e^{it})=f_{+}(e^{it})+f_{-}(e^{-it})$.
For a contraction $T$ on $\hil$, we set
	\begin{align}\label{INTeq1}
		f_{+}(T):=\sum\limits_{k=0}^{\infty}\hat{f}(k)T^k,\quad   f_{-}(T^*):=\sum\limits_{k=1}^{\infty}\hat{f}(-k){T^*}^k,\quad \text{and}\quad f(T):=f_{+}(T)+f_{-}(T^*).
	\end{align}
	The functions of contractions given by \eqref{INTeq1} were initially considered in \cite{Neid_1988}.
	
\smallskip
	
\begin{Not}\label{not_mult_cont}
		Let $n\in\Nat$, $n\geq 2$. Let $T_0$ be a contraction and $B$ a bounded self-adjoint operator on $\hil$. Define $$T_s=e^{isB}T_0,\quad s\in [0,1].$$ Given $f\in\Fpt{n}$, define $f(T_s)$ by \eqref{INTeq1} and set
		\begin{align}\label{taylorremmult}			
			\mathcal{R}_{n}^{\mult}(T_0,B,f)
			:=f(T_1)-f(T_0)-\sum_{k=1}^{n-1}\dfrac{1}{k!}\dfrac{d^k}{ds^k}\Big|_{s=0}f(T_s).
		\end{align}
	\end{Not}

\begin{thm}\label{extcontracmulti}
		Assume Notations \ref{not_mult_cont} and assume that $B\in\mathcal{B}_n(\hil)$.
		Then, $\mathcal{R}_{n}^{\mult}(T_0,B,f)\in \mathcal{B}_1(\hil)$ for every $f\in\Fpt{n}$. Furthermore, there exists a constant $d_{n}>0$ and functions $\eta_1,\ldots\eta_n\in  L^1(\cir,\dm)$ satisfying
		\begin{align}
			\label{etakbound}
			\|\eta_k\|_1\leq d_{n}\|B\|_n^n,\quad k=1,\dots,n,
		\end{align}
		such that \begin{align}\label{trformulauni2}
			\Tr\left(\mathcal{R}_{n}^{\mult}(T_0,B,f) \right)=\sum_{k=1}^{n}\int_{\cir}f^{(k)}(z)\eta_k(z)\,dz,
		\end{align}
		for every $f\in\Fpt{n}$. Moreover, if $\hat f(k)=0$ for $k=1,\dots,n-1$, then \eqref{trformulauni2} holds with $\eta_k=0$ for $k=1,\dots,n-1$.
	\end{thm}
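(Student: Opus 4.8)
The plan is to deduce Theorem~\ref{extcontracmulti} from the unitary trace formula of Theorem~\ref{extuniatry} by means of Sch\"affer's unitary matrix dilation, arranged so that the perturbation stays supported on the original space $\hil$; this is exactly what makes the assumptions $\dim\ker T_s=\dim\ker T_s^*$ and $D_{T_s}\in\bnh$ of the earlier treatments unnecessary. Concretely, I would fix the minimal unitary (Sch\"affer) dilation $\mathcal{U}_0$ of $T_0$ on the separable Hilbert space $\mathcal{K}=\mathcal{K}_-\oplus\hil\oplus\mathcal{K}_+$, where $\mathcal{K}_-,\mathcal{K}_+$ are the incoming and outgoing subspaces built from $\mathcal{D}_{T_0^*}$ and $\mathcal{D}_{T_0}$. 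With respect to this ordered orthogonal decomposition $\mathcal{U}_0$ is block lower triangular (standard structure of the minimal unitary dilation: $\mathcal{K}_+$ is $\mathcal{U}_0$-invariant, $\mathcal{K}_-$ is $\mathcal{U}_0^*$-invariant, both orthogonal to $\hil$), with diagonal blocks a backward shift $S_-$ on $\mathcal{K}_-$, the contraction $T_0$ on $\hil$, and a forward shift $S_+$ on $\mathcal{K}_+$; in particular $P_\hil\mathcal{U}_0^kP_\hil=T_0^k$ for all $k\ge 0$. I would then put $A:=0_{\mathcal{K}_-}\oplus B\oplus 0_{\mathcal{K}_+}$, a self-adjoint operator in $\mathcal{B}_n(\mathcal{K})$ with $\|A\|_n=\|B\|_n$, and $\mathcal{U}_s:=e^{isA}\mathcal{U}_0$.

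Since $e^{isA}=I_{\mathcal{K}_-}\oplus e^{isB}\oplus I_{\mathcal{K}_+}$ is block diagonal, $\mathcal{U}_s$ is still block lower triangular and only its $(\hil,\hil)$ block changes, to $e^{isB}T_0=T_s$ (its $(\hil,\mathcal{K}_-)$ block also changes, to $e^{isB}$ times the previous one, while all other blocks stay fixed). Consequently $\mathcal{U}_s$ is a unitary power dilation of $T_s$: $P_\hil\mathcal{U}_s^kP_\hil=T_s^k$ for $k\ge 0$, and $P_\hil\mathcal{U}_s^kP_\hil=(T_s^*)^{|k|}$ for $k<0$ after passing to adjoints. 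Because $f\in\Fpt{n}\subset\Fcal_{n-1}(\cir)$ has absolutely summable Fourier coefficients (Proposition~\ref{new_class}\eqref{new_class_ii}), the series \eqref{INTeq1} defining $f(\mathcal{U}_s)$ and $f(T_s)$ converge absolutely in operator norm, and compressing term by term gives $P_\hil f(\mathcal{U}_s)P_\hil=f(T_s)$. The compression $X\mapsto P_\hil XP_\hil$ is bounded on $\mathcal{B}(\mathcal{K})$ and $\|\cdot\|_1$-contractive, hence commutes with the Schatten-norm derivatives $\frac{d^k}{ds^k}\big|_{s=0}$ provided by Theorem~\ref{derivativeformulaunitarypath} and Lemma~\ref{lem:moi_rep}; therefore $s\mapsto f(T_s)$ is $n$ times differentiable and
\[
\mathcal{R}_{n}^{\mult}(T_0,B,f)=P_\hil\,\mathcal{R}_{n}^{\mult}(\mathcal{U}_0,A,f)\,P_\hil .
\]
By Theorem~\ref{extuniatry} we have $\mathcal{R}_{n}^{\mult}(\mathcal{U}_0,A,f)\in\mathcal{B}_1(\mathcal{K})$, so its compression $\mathcal{R}_{n}^{\mult}(T_0,B,f)$ lies in $\boh$.

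Next, the block triangular form of $\mathcal{U}_s$ forces the two wing compressions to be independent of $s$: $P_{\mathcal{K}_+}\mathcal{U}_s^kP_{\mathcal{K}_+}=S_+^{\,k}$ for $k\ge 0$ and $(S_+^*)^{|k|}$ for $k<0$, and similarly on $\mathcal{K}_-$ with $S_-$. Hence $s\mapsto P_{\mathcal{K}_\pm}f(\mathcal{U}_s)P_{\mathcal{K}_\pm}$ is constant, so its Taylor polynomial remainder vanishes and
\[
P_{\mathcal{K}_-}\,\mathcal{R}_{n}^{\mult}(\mathcal{U}_0,A,f)\,P_{\mathcal{K}_-}=P_{\mathcal{K}_+}\,\mathcal{R}_{n}^{\mult}(\mathcal{U}_0,A,f)\,P_{\mathcal{K}_+}=0 .
\]
Applying Lemma~\ref{traingulartrace} to $T=\mathcal{R}_{n}^{\mult}(\mathcal{U}_0,A,f)$ with $\hil_1=\hil$, $\hil_2=\mathcal{K}_-$, $\hil_3=\mathcal{K}_+$ then gives
\[
\Tr\big(\mathcal{R}_{n}^{\mult}(T_0,B,f)\big)=\Tr\big(\mathcal{R}_{n}^{\mult}(\mathcal{U}_0,A,f)\big),
\]
and \eqref{trformulauni2}, the bounds \eqref{etakbound} (with $\|A\|_n=\|B\|_n$), and the vanishing of $\eta_1,\dots,\eta_{n-1}$ when $\hat f(k)=0$ for $1\le k\le n-1$ are then exactly what Theorem~\ref{extuniatry} asserts for the pair $(\mathcal{U}_0,A)$.

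The main obstacle — and the place where the earlier works needed $\dim\ker T_s=\dim\ker T_s^*$ and $D_{T_s}\in\bnh$ — is organizing the dilation so that the perturbation is literally $B$ extended by zero rather than an operator built from the (possibly non-compact) defect operators; this is what makes $A\in\mathcal{B}_n(\mathcal{K})$ automatic. It works because along the multiplicative path one has $D_{T_s}=D_{T_0}$, so only the $\hil$-row of the Sch\"affer matrix moves, and because multiplying the fixed dilation $\mathcal{U}_0$ by the block-diagonal unitary $e^{isA}$ preserves the triangular structure and the power-dilation property. The remaining care is routine: justifying that the operator-valued Taylor remainder on $\mathcal{K}$ compresses to the one on $\hil$ in the $\|\cdot\|_n$-sense rather than merely weakly (using Schatten-norm differentiability from Theorem~\ref{derivativeformulaunitarypath} and Lemma~\ref{lem:moi_rep} together with continuity of the compression map), and checking that the absolutely convergent expansions \eqref{INTeq1} are compatible with compression, which rests on Proposition~\ref{new_class}\eqref{new_class_ii}.
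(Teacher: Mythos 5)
Your proposal is correct and takes essentially the same route as the paper: Sch\"affer's unitary dilation with the perturbation $B$ extended by zero off $\hil$, the compression identity $\mathcal{R}_{n}^{\mult}(T_0,B,f)=P_\hil\,\mathcal{R}_{n}^{\mult}(U_0,A,f)\big|_\hil$, vanishing of the two corner diagonal blocks of the dilated remainder, Lemma \ref{traingulartrace} to equate the traces, and finally Theorem \ref{extuniatry} together with $\|A\|_n=\|B\|_n$. The only differences are cosmetic: the paper verifies the block structure by explicit term-by-term computations on the Fourier expansion (equations \eqref{expression1}--\eqref{expression2} and \eqref{new2}), whereas you argue structurally via preservation of block triangularity along the path and continuity of the compression map (and note that Sch\"affer's dilation need not be minimal, though this is immaterial to the argument).
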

	
	\begin{proof}
Let $U_0:=U_{T_0}$ be Sch\"affer's unitary matrix dilation on $\ell_2(\hil)\oplus \hil\oplus \ell_2(\hil)$ of $T_0$ whose block matrix representation is given by \eqref{schafferdil}.
		Let $U_s$ be the unitary dilation on $\ell_2(\hil)\oplus \hil\oplus \ell_2(\hil)$ of $T_s$ whose block representation is given by
		\begin{align}
			U_s=e^{isA}U_0=\begin{bmatrix}
				I& 0& 0 \\
				0& e^{isB}& 0 \\
				0& 0& I
			\end{bmatrix}\begin{bmatrix}
				S^*& 0& 0 \\
				D_{T_0^*}P_\hil& T_0& 0 \\
				-T_0^*P_\hil& D_{T_0}& S
			\end{bmatrix}=\begin{bmatrix}
				S^*& 0& 0 \\
				e^{isB}D_{T_0^*}P_\hil& T_s& 0 \\
				-T_0^*P_\hil& D_{T_0}& S
			\end{bmatrix},
		\end{align}
		where $A=\begin{bmatrix}
			0& 0& 0 \\
			0& B& 0 \\
			0& 0& 0
		\end{bmatrix}: \ell_2(\hil)\oplus \hil\oplus \ell_2(\hil)\to \ell_2(\hil)\oplus \hil\oplus \ell_2(\hil)$ is the self-adjoint extension of $B$.
		Note that $A\in\mathcal{B}_n(\ell_2(\hil)\oplus \hil\oplus \ell_2(\hil))$ and $U_0$  satisfy the hypothesis of Theorem \ref{extuniatry} and that
		\begin{align}
			\label{AnBn}
			\|A\|_n=\|B\|_n.
		\end{align}
		
		Let $ k,l\in\Nat$. By a straightforward computation similar to the one in \eqref{uniderfor},
		\begin{align*}
			&\frac{d^l}{ds^l}\bigg|_{s=0}(U_s^*)^k=\left(\frac{d^l}{ds^l}\bigg|_{s=0}U_s^k\right)^*,\\
			\nonumber
			&\frac{d}{ds}\Big|_{s=t}(T_s^*)^k=\lim_{h\to 0}\frac{(T_{t+h}^*)^k-(T_{t}^*)^k}{h}=\,\left(\lim_{h\to 0}\frac{T_{t+h}^k-T_{t}^k}{h}\right)^*
			=\left(\frac{d}{ds}\bigg|_{s=t}T_s^k\right)^*.
		\end{align*}
		Therefore, it follows from the above identities and \eqref{remua} that
		\begin{align}\label{adjoitrelation}
			\mathcal{R}_{n}^{\textit {Mult}}(U_0,A,z^{-k})=\, {U_1^*}^k-{U_0^*}^k-\sum_{l=1}^{n-1}\dfrac{1}{l!}\dfrac{d^l}{ds^l}\Big|_{s=0}{U_s^*}^k=
			\left(\mathcal{R}_{n}^{\textit {Mult}}(U_0,A,z^{k})\right)^*.
		\end{align}
		Similarly,
		\begin{align}\label{adjoitrelation1}
			\mathcal{R}_{n}^{\textit {Mult}}(T_0,B,z^{-k})=\left(\mathcal{R}_{n}^{\textit {Mult}}(T_0,B,z^{k})\right)^*.
		\end{align}
		Following the computation of \cite[Theorem 5.3.4]{Skbook} for unitary operators, we obtain
		\begin{align}\label{cont_der}
			\dfrac{d^l}{dt^l}\Big|_{t=s}T_{t}^k
			=\sum_{r=1}^{l}~\sum_{\substack{l_1+l_2+\cdots+l_{r}=l\\l_1,l_2,\ldots,l_{r}\geq 1}}~\dfrac{l!}{l_1!\cdots l_r!}\Bigg[\sum_{\substack{\alpha_0+\alpha_1+\cdots+\alpha_r=k\\\alpha_0\geq 0; \,\alpha_1,\ldots,\alpha_r\geq 1}}T_{s}^{\alpha_0}(iB)^{l_1}T_{s}^{\alpha_1}\cdots (iB)^{l_{r}}T_{s}^{\alpha_r}\Bigg]
		\end{align}
		for $l,k\in\Nat$. Recall that we also have a similar formula for the power of the unitary operator $U_s^k$ (see \eqref{uniderfor}).
		Consequently, there exists $c_l>0$ such that
		\begin{align}\label{der_est}
			\Big\|\dfrac{d^l}{dt^l}\Big|_{t=s}T_t^k \Big\|\leq \,c_l\,k^{l}\,\|B\|^l\; \text{ and }\; \Big\|\dfrac{d^l}{dt^l}\Big|_{t=s}U_t^k \Big\|\leq \,c_l\,k^{l}\,\|A\|^l
		\end{align} for $l,k\in\Nat$. We also have the representation
		\[ \dfrac{d^l}{dt^l}\Big|_{t=s_2}T_{t}^k-\dfrac{d^l}{dt^l}\Big|_{t=s_1}T_{t}^k
		=\int_{s_1}^{s_2}\dfrac{d^{l+1}}{dt^{l+1}}\Big|_{t=s}\, T_t^k\,ds,\]
for every $l\in\Nat\cup\{0\}$ and $k\in\Nat$, which along with \eqref{der_est} implies \[\left\|\dfrac{d^l}{dt^l}\Big|_{t=s_2}T_{t}^k-\dfrac{d^l}{dt^l}\Big|_{t=s_1}T_{t}^k \right\|\leq |s_2-s_1|\,c_{l+1}|k|^{l+1}\|B\|^{l+1}.\]  A completely analogous bound holds for $T^*_t$ in place of $T_t$.

Let $f\in\Fpt{n}$. Since $\sum_{k\in\mathbb{Z}}|k|^{n-1}|\hat{f}(k)|<\infty$, the above estimates imply that
\begin{align*}
\dfrac{d^l}{dt^l}\Big|_{t=0}f(T_t)
=\sum_{k=0}^\infty\hat{f}(k)\,\dfrac{d^l}{dt^l}\Big|_{t=0}T_t^k
+\sum_{k=1}^\infty\hat{f}(-k)\,\dfrac{d^l}{dt^l}\Big|_{t=0}(T_t^*)^k,\quad
 l=1,\dots,n-1,
\end{align*}
where the series converge absolutely in the operator norm. Hence,
		\begin{align*}
			&\mathcal{R}_{n}^{\textit{Mult}}(T_0,B,f)
			=f(T_1)-f(T_0)-\sum_{l=1}^{n-1}\dfrac{1}{l!}\dfrac{d^l}{ds^l}\Big|_{s=0}
			\Big(\sum_{k=0}^{\infty}\hat{f}(k)T_s^k+\sum_{k=1}^{\infty}\hat{f}(-k)(T_s^*)^k\Big)\\
			&=\sum_{k=-\infty}^{\infty}\hat{f}(k)T_1^k-\sum_{k=-\infty}^{\infty}\hat{f}(k)T_0^k-\sum_{l=1}^{n-1}\dfrac{1}{l!}
			\Big(\sum_{k=0}^{\infty}\hat{f}(k)\dfrac{d^l}{ds^l}\Big|_{s=0}T_s^k
			+\sum_{k=1}^{\infty}\hat{f}(-k)\dfrac{d^l}{ds^l}\Big|_{s=0}(T_s^*)^k\Big).
		\end{align*}
Since all the series on the right-hand side converge absolutely in the operator norm, we can rearrange the terms and obtain
\begin{equation}\label{new2}
\begin{split}
&\mathcal{R}_{n}^{\textit {Mult}}(T_0,B,f)= \sum_{k=-\infty}^{\infty}\hat{f}( k)\mathcal{R}_{n}^{\textit {Mult}}(T_0,B,z^k)\;\text{ and, similarly,}\\
&\mathcal{R}_{n}^{\textit {Mult}}(U_0,A,f)= \sum_{k=-\infty}^{\infty}\hat{f}( k)\mathcal{R}_{n}^{\textit {Mult}}(U_0,A,z^k).
\end{split}
\end{equation}	
		
By examining the block matrix representations of $U_1^k-U_0^k$ and $\dfrac{d^l}{ds^l}\Big|_{s=0}U_s^k$, below we confirm the block representation for the remainder
		\begin{align}\label{blockrep}
			\mathcal{R}_{n}^{\textit {Mult}}(U_0,A,z^k)=\begin{blockarray}{ccccc}
				\ell_2(\hil) & \hil & \ell_2(\hil) &  \\[4pt]
				\begin{block}{[ccc]cc}
					0& 0& 0 & \ell_2(\hil) \\[3pt]
					\#& \mathcal{R}_{n}^{\mult}(T_0,B,z^k)& 0 & \hil\\[3pt]
					\#& \#& 0& \ell_2(\hil)\\[3pt]
				\end{block}
			\end{blockarray}
		\end{align} for every $k\in\Nat$, where `$\#$' denotes a non-zero entry of a matrix. Indeed,
		\begin{align}
			\label{expression1}
			\nonumber
			&U_1^k-U_0^k=\sum_{j=0}^{k-1}U_1^j(U_1-U_0)U_0^{k-1-j}\\
			\nonumber
			&=\sum_{j=0}^{k-1}\begin{bmatrix}
				S^*& 0& 0 \\
				e^{iB}D_{T_0^*}P_\hil& T_1& 0 \\
				-T_0^*P_\hil& D_{T_0}& S
			\end{bmatrix}^j
			\begin{bmatrix}
				0& 0& 0 \\
				(e^{isB}-I)D_{T_0^*}P_\hil& T_1-T_0& 0 \\
				0& 0& 0
			\end{bmatrix}
			\begin{bmatrix}
				S^*& 0& 0 \\
				D_{T_0^*}P_\hil& T_0& 0 \\
				-T_0^*P_\hil& D_{T_0}& S
			\end{bmatrix}^{k-1-j}\\
			\nonumber
			&=\sum_{j=0}^{k-1}\begin{bmatrix}
				{S^*}^j& 0& 0 \\
				\#& T_1^j& 0 \\
				\#& \#& S^j
			\end{bmatrix}
			\begin{bmatrix}
				0& 0& 0 \\
				(e^{isB}-I)D_{T_0^*}P_\hil& T_1-T_0& 0 \\
				0& 0& 0
			\end{bmatrix}
			\begin{bmatrix}
				{S^*}^{k-j-1}& 0& 0 \\
				\#& T_0^{k-j-1}& 0 \\
				\#& \#& S^{k-j-1}
			\end{bmatrix}\\
			&=\begin{bmatrix}
				0& 0& 0 \\
				\#& T_1^k-T_0^k& 0 \\
				\#& \#& 0
			\end{bmatrix}.
		\end{align}
		By Theorem \ref{derivativeformulaunitarypath},
		\begin{align}
			\label{expression2}
			\nonumber
			\dfrac{d^l}{ds^l}\Big|_{s=0}U_s^k=&	\sum_{r=1}^{l}~\sum_{\substack{j_1+j_2+\cdots+j_{r}=l\\j_1,j_2,\ldots,j_{r}\geq 1}}~\dfrac{l!}{j_1!\cdots j_r!}\Bigg[\sum_{\substack{\alpha_0+\alpha_1+\cdots+\alpha_r=k\\\alpha_0\geq 0;\,\alpha_1,\ldots,\alpha_r\geq 1}}U_0^{\alpha_0}(iA)^{j_1}U_0^{\alpha_1}\cdots (iA)^{j_{r}}U_0^{\alpha_r}\Bigg]\\
			\nonumber				=&\sum_{r=1}^{l}~\sum_{\substack{j_1+j_2+\cdots+j_{r}=l\\j_1,j_2,\ldots,j_{r}\geq 1}}~\dfrac{l!}{j_1!\cdots j_r!}
			\begin{bmatrix}
				{S^*}^{\alpha_0}& 0& 0 \\
				\#& T_0^{\alpha_0}& 0 \\
				\#& \#& S^{\alpha_0}
			\end{bmatrix}\begin{bmatrix}
				0& 0& 0 \\
				0& (iB)^{j_1}& 0 \\
				0& 0& 0
			\end{bmatrix}\times\cdots\\
			\nonumber
			&\hspace{4cm}\times\begin{bmatrix}
				0& 0& 0 \\
				0& (iB)^{j_r}& 0 \\
				0& 0& 0
			\end{bmatrix}
			\begin{bmatrix}
				{S^*}^{\alpha_r}& 0& 0 \\
				\#& T_0^{\alpha_0}& 0 \\
				\#& \#& S^{\alpha_r}
			\end{bmatrix}\\
			=&
			\begin{bmatrix}
				0& 0& 0 \\
				\#& \dfrac{d^l}{ds^l}\Big|_{s=0}T_s^k& 0 \\
				\#& \#& 0
			\end{bmatrix},
		\end{align}
		where the last equality follows from \eqref{cont_der}.
		Thus, combining \eqref{expression1} and \eqref{expression2} yields \eqref{blockrep}.
		
		The properties \eqref{adjoitrelation}, \eqref{adjoitrelation1}, and \eqref{blockrep} imply
		\begin{align}\label{compress}
			\mathcal{R}_{n}^{\mult}(T_0,B,z^k)=Q_\hil\, \mathcal{R}_{n}^{\mult}(U_0,A,z^k)\big|_\hil,\quad k\in\mathbb{Z},
		\end{align}
		where $Q_\hil$ is the orthogonal projection of $\ell_2(\hil) \oplus \hil \oplus \ell_2(\hil)$ onto the subspace $0 \oplus \hil \oplus 0$.
		Combining \eqref{new2} and \eqref{compress} gives
		\begin{align*}
			\mathcal{R}_{n}^{\mult}(T_0,B,f) = Q_\hil \, \mathcal{R}_{n}^{\mult}(U_0,A,f) \big|_\hil, \quad f \in \Fpt{n}.
		\end{align*}
 Hence, by Theorem \ref{extuniatry}, $\mathcal{R}_{n}^{\mult}(T_0,B,f)\in\boh$ for every $f\in\Fpt{n}$.

It follows from \eqref{blockrep} and \eqref{adjoitrelation} that the block matrix representation of $\mathcal{R}_{n}^{\mult}(U_0,A,z^k)$ is given by \begin{align*}
		\mathcal{R}_{n}^{\mult}(U_0,A,z^k) = \begin{bmatrix}
			0 & \# & \# \\
			\# & \mathcal{R}_{n}^{\mult}(T_0,B,z^k) & \# \\
			\# & \# & 0
		\end{bmatrix},\quad k\in\Z.
\end{align*}
Hence the block matrix representation of $\mathcal{R}_{n}^{\mult}(U_0,A,f)$ is given by
\begin{align}\label{newblockrep}
				\mathcal{R}_{n}^{\mult}(U_0,A,f) = \begin{bmatrix}
					0 & \# & \# \\
					\# & \mathcal{R}_{n}^{\mult}(T_0,B,f) & \# \\
					\# & \# & 0
				\end{bmatrix}.
\end{align}
By Lemma \ref{traingulartrace}, it follows from  \eqref{newblockrep} that, for every $f\in\Fpt{n}$,
		\begin{align}\label{finaltraceidentity}
			\Tr\left(\mathcal{R}_{n}^{\mult}(T_0,B,f)\right)
			=\Tr\left(\mathcal{R}_{n}^{\mult}(U_0,A,f)\right).
		\end{align}
Applying \eqref{trformulaunigen} of Theorem \ref{extuniatry} on the right hand side of \eqref{finaltraceidentity} completes the proof of \eqref{trformulauni2}. By Theorem \ref{extuniatry} and \eqref{AnBn}, we obtain that the respective functions $\eta_k$ satisfy \eqref{etakbound}.
\end{proof}
	
In Theorem \ref{Helt1} below we extend the first order trace formula of \cite{AC_KBS_JOT} to the higher order case. Our main tools are the higher order trace formula for contractions over $\cir$ derived in Theorem~\ref{extcontracmulti} and the Poisson integral extension of a function form $\cir$ to $\mathbb{D}$.
	
	\medskip
	
	\paragraph{\bf Poisson integral}
	We recall that the Poisson integral $Pf$ of $f\in L^1(\mathbb{T})$ is defined by
	\begin{align}\label{poissonint}
		(Pf)(z) =  \frac{1}{2\pi}\int_0^{2\pi}\frac{1-|z|^2}{\left|\textup{e}^{it}-z\right|^2}
		~f(\textup{e}^{it})\,dt,\quad z\in\D.
	\end{align}
	By noting that
	\begin{align*}
		\frac{1-|z|^2}{\left|\textup{e}^{it}-z\right|^2}
		= 1 + \sum_{n=1}^{\infty} \bar{z}^n\textup{e}^{int}+\sum_{n=1}^{\infty} z^n\textup{e}^{-int},\quad z\in\D,
	\end{align*}
	we obtain from \eqref{poissonint} that
	\begin{align}\label{poissonintzz}
		(Pf)(z) =  \hat{f}(0) + \sum_{n=1}^{\infty} \hat{f}(-n)\bar{z}^n+\sum_{n=1}^{\infty} \hat{f}(n) z^n,\quad  z\in \mathbb{D}.
	\end{align}
	Let $f \in L^1(\cir)$ and define the extension $\widetilde{f}$ of $f$ from $\mathbb{T}$ to $\overline{\mathbb{D}}=\mathbb{D}\cup \mathbb{T}$ by
\begin{equation}\label{newrepresentation}
		\widetilde{f}(z,\bar{z}):= (Pf)(z)
		= \hat{f}(0) + \sum\limits_{n=1}^{\infty} \hat{f}(-n)\bar{z}^n+\sum\limits_{n=1}^{\infty} \hat{f}(n) z^n,\quad z\in\overline{\mathbb{D}}.
\end{equation}
	
	\begin{thm} \label{Helt1}
		Assume Notations \ref{not_mult_cont} and assume that $B\in\mathcal{B}_n(\hil)$.
		Then, $\mathcal{R}_{n}^{\mult}(T_0,B,f)\in\mathcal{B}_1(\hil)$ for $f\in\Fcal_n(\cir)$ and
		\begin{align}\label{helt1}
			\textup{Tr} \left(\mathcal{R}_{n}^{\mult}(T_0,B,f)\right)=
			\sum_{k=1}^{n}\; \lim\limits_{R\,\uparrow 1} \int\limits_{\{z:\;|z|\leq R<1\}}
			\left(\frac{\partial \widetilde{\eta_k}}{\partial z}\frac{\partial  \widetilde{f^{( k-1)}}}{\partial \bar{z}} -\frac{\partial  \widetilde{f^{(k-1)}}}{\partial z}\frac{\partial  \widetilde{\eta_k}}{\partial \bar{z}}\right)
			\,dz\wedge d\bar{z},
		\end{align}
		where $\widetilde{\eta_k}, \widetilde{f^{(k-1)}}$ are given by \eqref{newrepresentation}, the functions $\eta_k$, $k=1,\dots,n$, are the spectral shift functions for the pair $(T_0, B)$ provided by Theorem \ref{extcontracmulti},
and $dz\wedge d\bar{z}$ is the Lebesgue measure on $\mathbb{D}$.
	\end{thm}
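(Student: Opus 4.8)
The plan is to reduce the Helton–Howe type formula \eqref{helt1} to the trace formula \eqref{trformulauni2} of Theorem \ref{extcontracmulti} by identifying the right‑hand side of \eqref{helt1} with $\sum_{k=1}^n \int_\cir f^{(k)}(z)\eta_k(z)\,dz$. First I would note that, since $B\in\mathcal{B}_n(\hil)$, Theorem \ref{extcontracmulti} applies to $f$ (which lies in $\Fcal_n(\cir)$, hence the relevant pieces are controlled), giving both $\mathcal{R}_{n}^{\mult}(T_0,B,f)\in\boh$ and the functions $\eta_1,\dots,\eta_n\in L^1(\cir,\dm)$. The operator $\widetilde f(T_s,T_s^*)$ differs from $f(T_s)$ only in that the Poisson extension replaces $z^k\mapsto T_s^k$ by $z^k\mapsto T_s^k$ and $\bar z^{\,k}\mapsto (T_s^*)^k$; in fact, comparing \eqref{INTeq1} with \eqref{newoperatorfunct} one sees $\widetilde f(T,T^*)=f(T)$ for contractions $T$, so $\mathcal{R}_{n}^{\mult}(T_0,T_0^*,B,\widetilde f)=\mathcal{R}_{n}^{\mult}(T_0,B,f)$ as operators, and in particular it lies in $\boh$. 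Hence $\Tr\big(\mathcal{R}_{n}^{\mult}(T_0,T_0^*,B,\widetilde f)\big)=\sum_{k=1}^n\int_\cir f^{(k)}(z)\,\eta_k(z)\,dz$.

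The remaining and main task is the purely function‑theoretic identity: for $g,h\in L^1(\cir)$ with sufficiently absolutely convergent Fourier series (here $g=\eta_k$, $h=f^{(k-1)}$),
\begin{align}\label{poissonidentity}
\int_\cir (gh')(z)\,dz
=\lim_{R\uparrow 1}\int_{\{|z|\le R\}}\left(\frac{\partial\widetilde g}{\partial z}\,\frac{\partial\widetilde h}{\partial\bar z}-\frac{\partial\widetilde h}{\partial z}\,\frac{\partial\widetilde g}{\partial\bar z}\right)dz\wedge d\bar z,
\end{align}
where $\widetilde g,\widetilde h$ are the Poisson (harmonic) extensions given by \eqref{newrepresentation} and $h'$ on the left is the derivative with respect to the circle variable, i.e. $\widehat{h'}(m)=im\,\hat h(m)$ (matching the convention that $f^{(k)}(z)$ on $\cir$ has $\widehat{f^{(k)}}(m)$ related to $(im)^k\hat f(m)$, so that $f^{(k)}=\big(f^{(k-1)}\big)'$ on $\cir$). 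To prove \eqref{poissonidentity} I would expand $\widetilde g(z,\bar z)=\sum_m \hat g(m)\,\rho^{|m|}$‑type monomials — precisely $\widetilde g(z)=\hat g(0)+\sum_{m\ge 1}\hat g(m)z^m+\sum_{m\ge 1}\hat g(-m)\bar z^m$ as in \eqref{newrepresentation}, and similarly for $\widetilde h$. Then $\partial_z\widetilde g=\sum_{m\ge 1}m\,\hat g(m)z^{m-1}$, $\partial_{\bar z}\widetilde g=\sum_{m\ge 1}m\,\hat g(-m)\bar z^{m-1}$, and likewise for $\widetilde h$; the integrand is a locally uniformly convergent double series in $z,\bar z$. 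Writing $dz\wedge d\bar z=-2i\,dx\,dy$ and passing to polar coordinates $z=\rho e^{i\theta}$, the $\theta$‑integral kills all off‑diagonal terms, leaving only pairings of $z^{m-1}$ against $\bar z^{m-1}$; the surviving $\rho$‑integral $\int_0^R \rho^{2m-1}\,d\rho=R^{2m}/(2m)$ then, after letting $R\uparrow 1$, produces exactly the factor $1/(2m)$ that turns $\sum_m m\,\hat g(m)\cdot m\,\hat h(-m)$ combinations into $\sum_m im\,\hat h(m)\hat g(-m)+\text{c.c.}$, which one recognizes as the Fourier expansion of $\int_\cir g\,h'\,dz$ (up to the normalizing constant $2\pi i$ built into \eqref{newrepresentation} and \eqref{poissonint}). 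Care with the normalization constants — the $1/(2\pi)$ in the Poisson kernel, the $1/(2\pi i)$ in the Fourier coefficients, and the factor in $dz\wedge d\bar z$ — is exactly what must be tracked so that the constants on both sides agree; summing over $k=1,\dots,n$ then yields \eqref{helt1}.

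For the interchange of the limit $R\uparrow 1$, the $\theta$‑integration, and the infinite summation, I would invoke absolute convergence: $\sum_m|m|\,|\hat g(m)|<\infty$ and $\sum_m |m|\,|\hat h(m)|<\infty$. The former holds because $\eta_k\in L^1(\cir)$ does not by itself give this, so here one uses the extra structure from Theorem \ref{extcontracmulti}: the $\eta_k$ for $k<n$ are trigonometric polynomials (hence cause no trouble), while for $k=n$ one has $h=f^{(n-1)}$ with $f\in\Fcal_n(\cir)$, so $\sum_m|m|\,|\widehat{f^{(n-1)}}(m)|\le\sum_m|m|^n|\hat f(m)|<\infty$, and $\eta_n\in L^1$ is paired against the rapidly decaying $\partial\widetilde{f^{(n-1)}}$; more carefully, one integrates by parts on $\cir$ to move a derivative onto whichever factor has the summable coefficients, or one approximates $\eta_n$ by Cesàro means and passes to the limit using $\|\eta_n\|_1<\infty$ together with boundedness of the harmonic extension operator on $L^1$. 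The main obstacle is precisely this convergence bookkeeping in the borderline case $k=n$, where only $\eta_n\in L^1$ is available: establishing \eqref{poissonidentity} for a general $L^1$ function paired against a function whose derivative has absolutely summable Fourier coefficients requires either a symmetric integration‑by‑parts on the circle before extending, or a careful dominated‑convergence argument in $R$, rather than the naive term‑by‑term computation that works when both functions are smooth. Once \eqref{poissonidentity} is in hand for each $k$, the theorem follows by linearity and the operator identity $\mathcal{R}_{n}^{\mult}(T_0,T_0^*,B,\widetilde f)=\mathcal{R}_{n}^{\mult}(T_0,B,f)$.
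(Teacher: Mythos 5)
Your proposal is correct and follows essentially the same route as the paper: identify $\mathcal{R}_{n}^{\mult}(T_0,T_0^*,B,\widetilde f)$ with $\mathcal{R}_{n}^{\mult}(T_0,B,f)$, invoke Theorem \ref{extcontracmulti} for the trace formula over $\cir$, and then match its right-hand side with the disk integrals via the Fourier/polar-coordinate computation (which the paper simply quotes from \cite[(3.11)]{AC_KBS_JOT}). Your worry about the case $k=n$ is resolved exactly as you suggest: only boundedness of $\hat\eta_n$ (from $\eta_n\in L^1$) together with $\sum_l|l|\,|\widehat{f^{(n-1)}}(l)|<\infty$ (from $f\in\Fcal_n(\cir)$) is needed to pass to the limit $R\uparrow 1$ by dominated convergence, so no integration by parts or Ces\`aro approximation is required.
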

	
	\begin{proof}
		Let $f\in \Fcal_n(\cir)$.
		 Then, by Theorem \ref{extcontracmulti}, $\mathcal{R}_{n}^{\mult}(T_0, B,f)\in\mathcal{B}_1(\hil)$ and there exist $\eta_1,\ldots,\eta_{n}$ in $L^{1}(\cir)$ such that
		\begin{align}\label{rep1}
			\Tr \left(\mathcal{R}_{n}^{\mult}(T_0, B,f)\right)=
			\sum_{k=1}^{n}\int_{\cir} f^{(k)}(z)\eta_k(z)\,dz.
		\end{align}
		Since $f\in\Fcal_n(\cir)$, we obtain $f^{(k)}\in L^1(\cir)$ for $k=1,\dots,n$, and
		\begin{align}\label{rep2}
			\nonumber\int_{\cir} f^{(k)}(z)\eta_k(z)~dz=&\int_{\cir} \left(\sum_{l=-\infty}^{\infty} \widehat{f^{(k)}}(l)z^l\right)\eta_k(z)\,dz\\
			\nonumber=&2\pi i\sum_{l=-\infty}^{\infty} \widehat{f^{(k)}}(l)\widehat{\eta}_k(-(l+1))\\
			=&2 \pi i \sum_{l=-\infty}^{\infty}(l+1) \widehat{f^{(k-1)}}(l+1)\widehat{\eta}_k(-(l+1)).
		\end{align}
		The final equality arises from the relationship $\widehat{f^{(k)}}(l) = (l+1)\widehat{f^{(k-1)}}(l+1)$. By employing a computation akin to that in the proof of \cite[(3.11)]{AC_KBS_JOT}, we derive the representation for the integral
		\begin{align}\label{rep3}
			\lim_{R\,\uparrow 1}\int_{\{z:\;|z|\leq R<1\}} \left(\frac{\partial \widetilde{\eta_k}}{\partial z}\frac{\partial  \widetilde{f^{( k-1)}}}{\partial \bar{z}} -\frac{\partial  \widetilde{f^{(k-1)}}}{\partial z}\frac{\partial  \widetilde{\eta_k}}{\partial \bar{z}}\right)\,dz\wedge d\bar{z} = 2\pi i \sum_{l=-\infty}^{\infty}\,l\,\widehat{f^{(k-1)}}(l)\,\hat{\eta_k}(-l)
		\end{align}
		for each $k=1,\dots,n$. Combining the equations \eqref{rep1}, \eqref{rep2}, and \eqref{rep3} completes the proof of \eqref{helt1}.
	\end{proof}
	
	\section{ Higher order trace formulas for maximal dissipative operators}\label{resolventcomtr}
	
	In this section we derive higher order trace formulas for a pair of maximal dissipative operators  $L_0,L_1$ without imposing the stringent assumptions of \cite[Theorem 5.3]{Ch_Pr_NYJM} that $\dim \ker(L_j+iI) = \dim \ker(L_j^*-iI)$ and $\Im L_j=\frac{1}{2i}(L_j-L_j^*)\in\mathcal{B}_{n/2}(\hil)$, $j=0,1$. Our result builds upon the trace formula for multiplicative paths of contractions established in Theorem \ref{extcontracmulti}.
	
	We recall that a densely defined linear operator $L$ (possibly unbounded) in $\hil$ is called dissipative if $\Im\la L\,\xi, \xi\ra\le0$ for all $\xi\in\text{Dom}(L)$.  A dissipative operator is called maximal dissipative if it does not have a proper dissipative extension.
	
	The Cayley transform of the maximal dissipative operator $L$ is defined by
	\begin{align}\label{cont_dis_cayley}
		T=(L+iI)(L-iI)^{-1}.
	\end{align}
	It is well known that $T$ is a contraction. Moreover, a contraction $T$ is the Cayley transform of a maximal dissipative operator $L$ if and only if $1$ is not an eigenvalue of $T$ (see, e.g., \cite[Theorem 4.1]{NFbook}). In the latter case, the inverse Cayley transform of $T$ is given by
	\begin{align}\label{cay_cont_dis}
		L=i(T+I)(T-I)^{-1}.
	\end{align}
	For $\psi(\lambda)=\sum_{k\in\mathbb{Z}}a_k\left(\frac{\lambda+i}{\lambda-i}\right)^k\in \FptR{n}$, we define
	
	\begin{align*}
		\psi(L):=&\sum_{k=0}^{\infty} a_k\big((L+iI)(L-iI)^{-1}\big)^k+\sum_{k=1}^{\infty} a_{-k}{\big((L+iI)(L-iI)^{-1}\big)^*}^k\\
		=&\sum_{k=0}^{\infty}a_k T^k+\sum_{k=1}^{\infty}a_{-k} {T^*}^k.
	\end{align*}
Below we use the notation shortcut $\frac{L+iI}{L-iI}:=(L+iI)(L-iI)^{-1}$.
	
\begin{Not}\label{not_dis}
		Let $n \in \Nat$, $n \geq 2$. Let $L_0$ be a maximal dissipative operator in $\hil$ and $T_0$ be its Cayley transform given by \eqref{cont_dis_cayley}.  Let $B = B^* \in \mathcal{B}_n(\hil)$, set $T_1 := e^{iB} T_0$, and let $L_1$ be the inverse Cayley transform of $T_1$ given by \eqref{cay_cont_dis}. Assume that $1$ is not an eigenvalue of $T_1$.
	\end{Not}
	
	
	\begin{thm}\label{Trace formula: self-adjoint multiplicative path}
		Assume Notations \ref{not_dis}. Then, there exist a constant $c_n>0$ and $\gamma_k(\lambda):=(\lambda-i)^{-2}\,\eta_{k}\left(\frac{\lambda+i}{\lambda-i}\right)$ with $\eta_k$ as in Theorem \ref{extuniatry}, $k=1,\dots,n$, satisfying
		\[\|\eta_k\|_1\leq c_n \|(L_1-iI)^{-1}-(L_0-iI)^{-1}\|_n^n\] and
		\begin{align}	
			\label{dissipativetrf}
			\nonumber	&\Tr\Bigg(\psi(L_1)-\psi(L_0)-\sum_{k\in\mathbb{Z}}a_k\sum_{l=1}^{n-1}\sum_{r=1}^{l}~\sum_{\substack{l_1+l_2+\cdots+l_{r}=l\\l_1,l_2,\ldots,l_{r}\geq 1}}~\dfrac{1}{l_1!\cdots l_r!}\\
			\nonumber
			&\times \Bigg[\sum_{\substack{\alpha_0+\alpha_1+\cdots+\alpha_r=k\\\alpha_0\geq 0; \,\alpha_1,\ldots,\alpha_r\geq 1}}\left(\frac{L_0+iI}{L_0-iI}\right)^{\alpha_0}(iB)^{l_1}
			\left(\frac{L_0+iI}{L_0-iI}\right)^{\alpha_1}\cdots (iB)^{l_{r}}\left(\frac{L_0+iI}{L_0-iI}\right)^{\alpha_r}\Bigg]\Bigg)\\
			=&\sum_{k=1}^{n}\dfrac{i^{k-1}}{2^{k-1}}\int_\R (\lambda-i)^k\,\dfrac{d^{k-1}}{d\lambda^{k-1}}
			\left((\lambda-i)^k\psi'(\lambda)\right)\gamma_k(\lambda)\,d\lambda
		\end{align}
		for every $\psi(\lambda)=\sum_{k\in\mathbb{Z}}
		a_k\left(\frac{\lambda+i}{\lambda-i}\right)^k\in\FptR{n}$.
		Moreover, if $a_k=0$, $k=1,\dots,n-1$, then \eqref{dissipativetrf} holds with $\gamma_k=0$ for $k=1,\dots,n-1$.
	\end{thm}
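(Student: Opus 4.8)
The plan is to transfer the problem to the contraction setting via the Cayley transform, apply the already-proved Theorem~\ref{extcontracmulti} to the pair $(T_0,B)$, and then convert the trace formula on $\cir$ it provides into a formula on $\R$ through the change of variables $z=\frac{\lambda+i}{\lambda-i}$. The only work beyond Theorem~\ref{extcontracmulti} is this change of variables together with the identification of the elaborate polynomial subtracted on the left of \eqref{dissipativetrf} with the trace of a Taylor remainder.

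Given $\psi(\lambda)=\sum_{k\in\Z}a_k\big(\frac{\lambda+i}{\lambda-i}\big)^k\in\FptR{n}$, fix the associated $f(z)=\sum_{k\in\Z}a_kz^k\in\Fpt{n}$ on $\cir$. Directly from \eqref{cont_dis_cayley}, \eqref{cay_cont_dis}, \eqref{INTeq1} and the definition of $\psi(L)$ preceding Notations~\ref{not_dis}, one has $\psi(L_j)=f(T_j)$ for $j=0,1$, with $T_0$ the Cayley transform of $L_0$, $T_1=e^{iB}T_0$, and $L_1$ the inverse Cayley transform of $T_1$; moreover all the series here converge absolutely in operator norm because $\Fpt{n}\subset\Fcal_{n-1}(\cir)$ by Proposition~\ref{new_class}\eqref{new_class_ii}, and $\frac{d^l}{ds^l}\big|_{s=0}f(T_s)$ exists in operator norm for $l=1,\dots,n-1$. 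Expanding $f(T_s)$ via \eqref{INTeq1}, differentiating each holomorphic power $T_s^k$ by the multinomial formula \eqref{cont_der} and each anti-holomorphic power $(T_s^*)^k$ by its adjoint analogue, and using cyclicity of the trace together with the symmetry \eqref{adjoitrelation1} (as in the passage around \eqref{new2} in the proof of Theorem~\ref{extcontracmulti}), I would identify the subtracted sum in \eqref{dissipativetrf} with $\Tr\big(\sum_{l=1}^{n-1}\frac{1}{l!}\frac{d^l}{ds^l}\big|_{s=0}f(T_s)\big)$; hence the whole left-hand side of \eqref{dissipativetrf} equals $\Tr\big(\mathcal{R}_{n}^{\mult}(T_0,B,f)\big)$ in the notation \eqref{taylorremmult}.

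By Theorem~\ref{extcontracmulti} applied to $(T_0,B)$ there exist $\eta_1,\dots,\eta_n\in L^1(\cir,\dm)$, depending only on $T_0,B,n$, with $\mathcal{R}_{n}^{\mult}(T_0,B,f)\in\boh$, $\|\eta_k\|_1\le d_n\|B\|_n^n$, and $\Tr\big(\mathcal{R}_{n}^{\mult}(T_0,B,f)\big)=\sum_{k=1}^n\int_{\cir}f^{(k)}(z)\eta_k(z)\,dz$; moreover $\eta_k=0$ for $k=1,\dots,n-1$ whenever $a_k=\hat f(k)=0$ for those $k$, which yields the ``moreover'' assertion once the change of variables below is applied. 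For the norm bound I would pass from $\|B\|_n$ to $\|(L_1-iI)^{-1}-(L_0-iI)^{-1}\|_n$ using $T_1-T_0=(e^{iB}-I)T_0=2i\big[(L_1-iI)^{-1}-(L_0-iI)^{-1}\big]$ (from $T_j=I+2i(L_j-iI)^{-1}$) and $\|A\|_n=\|B\|_n$ for the self-adjoint extension $A$ of $B$ in Schäffer's dilation.

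It remains to change variables $z=m(\lambda):=\frac{\lambda+i}{\lambda-i}$. Here $m'(\lambda)=\frac{-2i}{(\lambda-i)^2}$ and $\psi=f\circ m$, so $f'(m(\lambda))=\frac{i(\lambda-i)^2}{2}\psi'(\lambda)$, and iterating $f^{(k)}(m(\lambda))=\frac{1}{m'(\lambda)}\frac{d}{d\lambda}f^{(k-1)}(m(\lambda))$ yields $f^{(k)}(m(\lambda))=\big(\frac{i(\lambda-i)^2}{2}\frac{d}{d\lambda}\big)^{k-1}\big[\frac{i(\lambda-i)^2}{2}\psi'(\lambda)\big]$. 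I would prove the elementary operator identity $\big(w^2\frac{d}{dw}\big)^{m}=w^{m+1}\frac{d^m}{dw^m}\,w^{m-1}$ by induction on $m$, which gives $\big(w^2\frac{d}{dw}\big)^{m}\big(w^2g\big)=w^{m+1}\frac{d^m}{dw^m}\big(w^{m+1}g\big)$, and apply it with $w=\lambda-i$, $m=k-1$, $g=\psi'$; together with $dz=m'(\lambda)\,d\lambda$ and $\gamma_k(\lambda)=(\lambda-i)^{-2}\eta_k(m(\lambda))$ this rewrites $\int_{\cir}f^{(k)}(z)\eta_k(z)\,dz$ as a constant multiple of $\int_\R(\lambda-i)^k\frac{d^{k-1}}{d\lambda^{k-1}}\big((\lambda-i)^k\psi'(\lambda)\big)\gamma_k(\lambda)\,d\lambda$, the constant being $\frac{i^{k-1}}{2^{k-1}}$ (it collects the powers of $m'(\lambda)$, using $i^2=-1$); the integral is finite since $\gamma_k\in L^1(\R)$ and $f^{(k)}$ is bounded. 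Summing over $k$ completes the proof. The step I expect to be the main obstacle is the identification in the second paragraph: since only positive powers of $\frac{L_0+iI}{L_0-iI}$ occur on the left of \eqref{dissipativetrf}, matching that expression with $\Tr\big(\sum_{l=1}^{n-1}\frac{1}{l!}\frac{d^l}{ds^l}\big|_{s=0}f(T_s)\big)$ requires careful bookkeeping of the derivative expansions of $T_s^k$ and $(T_s^*)^k$ and of how the anti-holomorphic contributions reorganize under the trace via cyclicity and \eqref{adjoitrelation1}; the change-of-variables computation is routine once the elementary identity is available, and everything else is a direct appeal to Theorem~\ref{extcontracmulti}.
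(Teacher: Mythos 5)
Your route coincides with the paper's: the paper's entire proof consists of substituting the Cayley transform \eqref{cont_dis_cayley} into the left-hand side of \eqref{trformulauni2} and performing the change of variables $z=\frac{\lambda+i}{\lambda-i}$ on its right-hand side as in \cite[Theorem 3.5]{PoSkSujfa2016}, which is exactly your plan. Your reduction to Theorem \ref{extcontracmulti} and your change-of-variables computation, including the identity $\bigl(w^2\frac{d}{dw}\bigr)^{m}=w^{m+1}\frac{d^m}{dw^m}\,w^{m-1}$ and the resulting constant $\frac{i^{k-1}}{2^{k-1}}$, are correct, and the ``moreover'' clause follows as you indicate.

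Two steps, however, would not survive as written. First, the norm estimate: Theorem \ref{extcontracmulti} gives $\|\eta_k\|_1\le d_n\|B\|_n^n$, while the identity $T_1-T_0=(e^{iB}-I)T_0=2i\bigl[(L_1-iI)^{-1}-(L_0-iI)^{-1}\bigr]$ only yields $\|(L_1-iI)^{-1}-(L_0-iI)^{-1}\|_n\le\tfrac12\|e^{iB}-I\|_n\le\tfrac12\|B\|_n$, i.e.\ the inequality in the direction opposite to the one you need; since $T_0$ is merely a contraction, $\|B\|_n$ cannot be dominated by $\|(e^{iB}-I)T_0\|_n$ (take $T_0=0$, the Cayley transform of $L_0=-iI$, for which $T_1=T_0$ and the resolvent difference vanishes for every $B$). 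So the bound in terms of the resolvent difference does not follow from your chain; the paper's one-sentence proof is silent on this estimate as well, so this is a weakness shared with the source rather than one your argument repairs. Second, the identification of the left-hand side: the negative Fourier modes are not recovered ``by cyclicity''. The derivative corrections coming from $\sum_{k\ge1}a_{-k}(T_s^*)^k$ are the adjoints \eqref{adjoitrelation1} of the displayed positive-power expressions (built from $T_0^*$ and $-iB$); they belong to the subtracted Taylor polynomial and are not separately trace class, so they cannot be generated or reshuffled under the trace from the positive-power terms. The left-hand side of \eqref{dissipativetrf} must simply be read as the Cayley-transform substitution into $\Tr\bigl(\mathcal{R}_{n}^{\mult}(T_0,B,f)\bigr)$ of \eqref{trformulauni2}, with the adjoint blocks understood for the negative modes -- which is how the paper performs the substitution -- rather than matched to the literally displayed expression by trace manipulations.
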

	
\begin{proof}
The result follows upon subsequently applying \eqref{taylorremmult}, \eqref{cont_der}, \eqref{cont_dis_cayley} on the left-hand side of \eqref{trformulauni2} and changing the variable on the right-hand side of \eqref{trformulauni2} as outlined in the proof of \cite[Theorem 3.5]{PoSkSu15}.
\end{proof}
	
	\section{ Simplified higher order trace formulas for unitaries and resolvent comparable self-adjoints}\label{simpletrformula}
	As evident from Theorems \ref{derivativeformulaunitarypath}, \ref{extuniatry}, and \ref{extcontracmulti}, the left-hand sides of the equations \eqref{trformulaunigen} and \eqref{trformulauni2} exhibit a highly intricate structure. In Theorem \ref{linunitarytraceformula} below we derive an alternative trace formula for unitary operators that does not involve computation of the operator derivatives along multiplicative paths.  As a consequence of the latter result, we obtain higher order trace formulas for resolvent comparable self-adjoint perturbations, considerably simplifying the trace formulas in \cite[Theorem 5.3]{Skadv2017}.
	
	\begin{Not}\label{not_lin_unitary}
		Let $n\in\Nat$, $n\geq 2$. Let $U_0,U_1$ be unitary operators on $\hil$ and let $$U_s=U_0+s(U_1-U_0),\quad s\in[0,1].$$ For $f\in\Fpt{n}$, consider the modified Taylor remainder
		\begin{align*}
			\mathcal{R}_{n}^{\lin}(U_1,U_0,f):= f(U_1)-f(U_0)-\sum_{k=1}^{n-1}T_{f^{[k]}}^{U_0,\ldots,U_0}(U_1-U_0,\ldots,U_1-U_0).
		\end{align*}
	\end{Not}
	
	
	\begin{thm}\label{linunitarytraceformula}
		Assume Notations \ref{not_lin_unitary} and assume $U_1-U_0\in\bnh$. Then,  $\mathcal{R}_{n}^{\lin}(U_1,U_0,f)\in \mathcal{B}_1(\hil)$ for every $f\in\Fpt{n}$. Furthermore, there exist a constant $c_{n}>0$ and a function $\eta_n$  in $ L^1(\cir,\dm)$, unique up to a polynomial of degree at most $n-1$, satisfying
		\begin{align}\label{shiftest}
			\|\eta_n\|_1\leq c_{n}\|U_1-U_0\|_n^n
		\end{align} such that
		\begin{align}\label{trformulauni1}
			\Tr\left(\mathcal{R}_{n}^{\lin}(U_1,U_0,f)\right)=\int_{\cir}f^{(n)}(z)\eta_n(z)\,dz
		\end{align}
		for every $f\in\Fpt{n}$.
	\end{thm}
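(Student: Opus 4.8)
The plan is to relate the modified (linear-path) Taylor remainder $\mathcal{R}_{n}^{\lin}(U_1,U_0,f)$ to the multiplicative-path remainder $\mathcal{R}_{n}^{\mult}(U_0,A,f)$, where $A=A^*\in\bnh$ is chosen so that $U_1=e^{iA}U_0$, and then invoke the trace formula \eqref{trformulaunigen} already established in Theorem \ref{extuniatry}. The key point is that both remainders are obtained by subtracting from $f(U_1)-f(U_0)$ an operator polynomial in $U_1-U_0$ of degree at most $n-1$ whose coefficients are multilinear operator integrals at the base point $U_0$; the difference between the two correction terms is therefore itself expressible, via the perturbation identities of Lemma \ref{algebraicprop} together with the derivative representation \eqref{25} of Theorem \ref{derivativeformulaunitarypath}, as a sum of multilinear operator integrals $T_{f^{[k]}}^{U_0,\dots,U_0}$ evaluated on the trace-class operators $U_1-U_0$ and $A^{j}U_0$. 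Concretely, $\frac{d^k}{ds^k}\big|_{s=0}f(U_s)$ along the multiplicative path expands by \eqref{25} into $i^k\sum_r\sum_{l_1+\cdots+l_r=k}\frac{k!}{l_1!\cdots l_r!}T_{f^{[r]}}^{U_0,\dots,U_0}(A^{l_1}U_0,\dots,A^{l_r}U_0)$, and since $U_1-U_0=(e^{iA}-I)U_0=\big(\sum_{m\ge1}\frac{(iA)^m}{m!}\big)U_0$, multilinearity lets one rewrite $T_{f^{[k]}}^{U_0,\dots,U_0}(U_1-U_0,\dots,U_1-U_0)$ as the same type of sum. Collecting terms, $\mathcal{R}_{n}^{\lin}(U_1,U_0,f)-\mathcal{R}_{n}^{\mult}(U_0,A,f)$ is a finite $\C$-linear combination of terms $T_{f^{[k]}}^{U_0,\dots,U_0}(A^{l_1}U_0,\dots,A^{l_k}U_0)$ with $l_1+\cdots+l_k\le n-1$, each of which lies in $\mathcal{B}_1(\hil)$ by Theorem \ref{moiest}(i) and Hölder, because $A\in\bnh$ and $A^{l_1}U_0\in\mathcal{B}_{n/l_1}(\hil)$ with $\sum l_j/n\le 1$.

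From here the argument proceeds in three steps. First, since $\mathcal{R}_{n}^{\mult}(U_0,A,f)\in\mathcal{B}_1(\hil)$ by Theorem \ref{extuniatry} and the difference just described is in $\mathcal{B}_1(\hil)$, we conclude $\mathcal{R}_{n}^{\lin}(U_1,U_0,f)\in\mathcal{B}_1(\hil)$ for every $f\in\Fpt{n}$. Second, we compute $\Tr\big(\mathcal{R}_{n}^{\lin}(U_1,U_0,f)\big)$ by adding $\Tr\big(\mathcal{R}_{n}^{\mult}(U_0,A,f)\big)=\sum_{k=1}^n\int_\cir f^{(k)}(z)\eta_k(z)\,dz$ to the trace of the difference term; by Corollary \ref{useofcyclicity}(ii) each $\Tr\big(T_{f^{[k]}}^{U_0,\dots,U_0}(A^{l_1}U_0,\dots,A^{l_k}U_0)\big)$ equals $\Tr\big(T_{\phi_k}^{U_0,\dots,U_0}(\cdots)\big)$, and—this is the crucial simplification—because all base points coincide with $U_0$, the symbol $\phi_k(z_0,\dots,z_{k-1})=f^{[k]}(z_0,\dots,z_{k-1},z_0)$ restricted to the diagonal ultimately produces, after the same Riesz-representation and Jackson-inequality argument used in the proof of Theorem \ref{extuniatry}, linear functionals in $f^{(1)},\dots,f^{(n-1)}$ that are bounded on $L^2(\cir,\dm)$ with norm $\le c_n\|A\|_n^n$. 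Thus $\Tr\big(\mathcal{R}_{n}^{\lin}(U_1,U_0,f)\big)=\sum_{k=1}^n\int_\cir f^{(k)}(z)\,\widetilde\eta_k(z)\,dz$ for new densities $\widetilde\eta_k\in L^1(\cir,\dm)$.

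The final step is to show that in the linear-path remainder the lower-order densities $\widetilde\eta_1,\dots,\widetilde\eta_{n-1}$ can be absorbed, i.e. that the formula collapses to a single integral against $\eta_n$. The mechanism is that $\mathcal{R}_{n}^{\lin}(U_1,U_0,f)=0$ whenever $f$ is a polynomial of degree at most $n-1$: for such $f$ the operator Taylor expansion $f(U_1)-f(U_0)=\sum_{k=1}^{n-1}T_{f^{[k]}}^{U_0,\dots,U_0}(U_1-U_0,\dots,U_1-U_0)$ is exact, by iterating the finite-difference identity $f(U_1)-f(U_0)=T_{f^{[1]}}^{U_0,U_1}(U_1-U_0)$ of Lemma \ref{algebraicprop}(i) together with part (ii) and the vanishing of $f^{[n]}$ on polynomials of degree $<n$. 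Consequently $\sum_{k=1}^n\int_\cir p^{(k)}(z)\widetilde\eta_k(z)\,dz=0$ for all $p\in\mathcal{P}_n(\cir)$, and since $p^{(n)}\equiv0$ while the $p^{(k)}$, $k\le n-1$, span all trigonometric polynomials of degree $\le n-1-k$, we get enough moment conditions on $\widetilde\eta_1,\dots,\widetilde\eta_{n-1}$ to replace them by $0$ after modifying $\widetilde\eta_n$ by a trigonometric polynomial; the decomposition $\Fpt{n}=\mathcal{P}_n(\cir)+\tFpt{n}$ (from Proposition \ref{new_class}(ii)) then propagates this to all $f\in\Fpt{n}$. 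The resulting $\eta_n$ is unique up to a polynomial of degree at most $n-1$ precisely because $f^{(n)}$ annihilates such polynomials when integrated, and the norm bound \eqref{shiftest} follows from the $\|A\|_n^n$ bounds together with the estimate $\|A\|_n\le\tilde c_n\|U_1-U_0\|_n$, which in turn comes from $U_1-U_0=(e^{iA}-I)U_0$ and Lemma \ref{explip0} applied in the form $\|A\|_n\lesssim\|e^{iA}-I\|_n$ (valid since $e^{iA}-I$ is invertible near $I$, equivalently $\log$ is Lipschitz there). The main obstacle I anticipate is this last point: controlling $\|A\|_n$ by $\|U_1-U_0\|_n$ requires that $\|U_1-U_0\|$ be small enough for the principal branch of the logarithm to apply, so the clean bound \eqref{shiftest} may need either a smallness hypothesis, a constant depending on $\|U_1-U_0\|$, or a separate argument handling the general case by a path/continuity reduction—mirroring how such issues are handled in \cite{PoSkSu15,Skadv2017}.
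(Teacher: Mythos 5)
Your proposal has two genuine gaps, both at the point where the paper has to work hardest. First, the structural claim about the difference $\mathcal{R}_{n}^{\lin}(U_1,U_0,f)-\mathcal{R}_{n}^{\mult}(U_0,A,f)$ is wrong: a term $T_{f^{[k]}}^{U_0,\ldots,U_0}(A^{l_1}U_0,\ldots,A^{l_k}U_0)$ with $l_1+\cdots+l_k=l\le n-1$ lies only in $\mathcal{B}_{n/l}(\hil)$, and since $l<n$ this ideal strictly contains $\mathcal{B}_1(\hil)$; the H\"older condition for trace class is $\sum_j l_j/n\ge 1$, not $\le 1$, so your justification is backwards. Moreover, expanding $U_1-U_0=(e^{iA}-I)U_0$ in powers of $A$ does not leave a \emph{finite} combination of low-degree terms: the contributions of total degree $\le n-1$ cancel between the two correction sums, and what survives is an infinite series of degree $\ge n$ terms whose trace-norm convergence and cancellation you would have to prove. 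As it stands, neither $\mathcal{R}_{n}^{\lin}(U_1,U_0,f)\in\boh$ nor the termwise traces you use later are established. The paper avoids all of this by iterating Lemma \ref{algebraicprop} to get the single identity $\mathcal{R}_{n}^{\lin}(U_1,U_0,f)=T_{f^{[n]}}^{U_0,U_1,U_0,\ldots,U_0}(U_1-U_0,\ldots,U_1-U_0)$, which gives trace-classness, the bound by $c_n\|f^{(n)}\|_\infty\|U_1-U_0\|_n^n$, and hence a finite (a priori not absolutely continuous) spectral shift \emph{measure} at once.

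Second, your absolute-continuity step does not go through: the Riesz-representation/Jackson argument in the proof of Theorem \ref{extuniatry} works only on the finite-dimensional space of polynomials of degree at most $n-1$ (Jackson's inequality compares sup and $L^2$ norms of such polynomials), so it cannot upgrade the lower-order trace functionals to $L^1(\cir,\dm)$ densities $\widetilde\eta_1,\ldots,\widetilde\eta_{n-1}$ for general $f\in\Fpt{n}$; for general $f$ those functionals only yield measures, and converting them into an $L^1$ density against $f^{(n)}$ is exactly the crux. This is why the paper splits into cases: for $n=2$ it uses $e^{iA}-I-iA\in\boh$ and one integration by parts, while for $n\ge 3$ it approximates $A$ by trace-class $A_m$, derives formulas with densities $\eta_{n,m}$ for the pairs $(e^{iA_m}U_0,U_0)$, proves a Cauchy estimate uniform over $\{\|f^{(n)}\|_\infty\le1\}$ via telescoping and the perturbation formula, and invokes \cite[Lemma 4.3]{Skadv2017} to pass to the limit in $L^1(\cir)/\mathcal{P}_n$; your moment conditions from the vanishing of $\mathcal{R}_{n}^{\lin}(U_1,U_0,p)$ on polynomials (which is correct, via $p^{[n]}=0$) constrain only finitely many Fourier modes and cannot replace this machinery. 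Finally, your closing worry is unfounded: choosing $A$ with $\sigma(A)\subset(-\pi,\pi]$ and using $|x|\le\frac{\pi}{2}|e^{ix}-1|$ on $(-\pi,\pi]$ gives $\|A\|_n\le\frac{\pi}{2}\|U_1-U_0\|_n$ with no smallness assumption, and in any case the bound \eqref{shiftest} comes from the order-$n$ multilinear operator integral estimate, not from $\|A\|_n$.
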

	
	\begin{proof}
		Let $f\in\Fpt{n}$. A repeated application of Lemma \ref{algebraicprop} yields
		\begin{align}
			\label{53}
			\nonumber
			\mathcal{R}_{n}^{\lin}(U_1,U_0,f)
			&=T_{f^{[1]}}^{U_0,U_1}(U_1-U_0)
			-\sum_{k=1}^{n-1}T_{f^{[k]}}^{U_0,\ldots,U_0}(U_1-U_0,\ldots,U_1-U_0)\\
			&=T_{f^{[n]}}^{U_0,U_1,U_0,\ldots,U_0}(U_1-U_0,\ldots,U_1-U_0).	
		\end{align}
		
		It follows from Definition \ref{moi} that $T_{f^{[n]}}^{ U_0,U_1,U_0,\ldots,U_0}(U_1-U_0,\ldots,U_1-U_0)\in\boh$ and, hence, $\mathcal{R}_{n}^{\lin}(U_1,U_0,f)\in\boh$. By Corollary \ref{useofcyclicity}, we have
		\begin{align*}
			\left|\Tr\left(\mathcal{R}_{n}^{\lin}(U_1,U_0,f)\right)\right|\leq \,c_n\,\|f^{(n)}\|_\infty\,\|U_1-U_0\|_n^n.
		\end{align*}
		Hence, there exists a measure $\mu_{n,U_0,U_1}$ satisfying
		\begin{align}\label{measureest}
			\|\mu_{n,U_0,U_1}\|\leq c_n\,\|U_1-U_0\|_n^n
		\end{align} and
		\begin{align}\label{lintracfor}
			\Tr\left(\mathcal{R}_{n}^{\lin}(U_1,U_0,f)\right)=\int_{\cir} f^{(n)}(z)\,d\mu_{n,U_0,U_1}.
		\end{align}
		Next we prove that the measure $\mu_{n,U_0,U_1}$ is absolutely continuous with respect to the Lebesgue measure on the circle. Since $U_1U_0^*$ is a unitary operator, there is a self-adjoint operator $A$ with the spectrum $\sigma(A)$ in $(-\pi,\pi]$ such that  $U_1U_0^*=e^{iA}$, and hence $$U_1=e^{iA}U_0.$$ Let $\{e_i\}_{i\in\Nat}$ be any orthonormal basis of $\hil$ and $E(\cdot)$ the spectral measure of $A$. Then, it follows from the spectral theorem and the inequality $|x|\leq~\dfrac{\pi}{2}|e^{ix}-1|$ for $x\in{(-\pi,\pi]}$ that
		\begin{align*}
			\|A\|_n^n =\sum_{i=1}^{\infty}\la |A|^ne_i,e_i\ra
			=\sum_{i=1}^{\infty}\int_{-\pi}^{\pi}|\lambda|^n\, \|E(d\lambda) e_i\|^2
			& \leq~\left(\dfrac{\pi}{2}\right)^n\sum_{i=1}^{\infty}\int_{-\pi}^{\pi}|e^{i\lambda}-1|^n~\|E(d\lambda) e_i\|^2\\
			& = \left(\dfrac{\pi}{2}\right)^n\,\|U_1-U_0\|_n^n.
		\end{align*}
		Since $U_1-U_0\in\bnh$, we conclude that $A\in\bnh$.
		
		\smallskip
		\noindent{\it Case 1:} $n=2$.
		
		We have
		\begin{align*}
			\mathcal{R}_{2}^{\lin}(U_1,U_0,f)
			=\mathcal{R}_{2}^{\mult}(U_0,A,f)-T_{f^{[1]}}^{U_0,U_0}\left(\left(e^{iA}-I-iA\right)U_0\right),
		\end{align*}
		where $\mathcal{R}_{2}^{\mult}(U_0,A,f)$ is defined in \eqref{remua}.
		Therefore, by Theorem \ref{extuniatry} and Corollary \ref{useofcyclicity}, there are $\eta_{12}$, $\eta_{22}$ in $L^1(\mathbb{T})$, and a measure $\nu$ on $\cir$ such that
		\begin{align*}
			\|\nu\|\le\|e^{iA}-I-iA\|_1
		\end{align*}
		and
		\begin{align*}
			\Tr(\mathcal{R}_{2}^{\lin}(U_1,U_0,f))= \int_{\mathbb{T}} f''(z)\eta_{22}(z)\,dz+\int_{\mathbb{T}} f'(z)\eta_{12}(z)\,dz-\int_{\mathbb{T}} f'(z)d\nu.
		\end{align*}
		Integrating by parts yields the existence of an integrable function $\eta_2$ such that \eqref{trformulauni1} holds (see, e.g., the proof of \cite[Theorem 3.2]{PoSkSu}).
		\smallskip
		
		\noindent{\it Case 2:} $n\geq 3$.

		Note that
		\begin{align}\label{breakintwoparts}		\mathcal{R}_{n}^{\lin}(U_1,U_0,f)=\mathcal{R}_{n-1}^{\lin}(U_1,U_0,f)-T_{f^{[n-1]}}^{U_0,\ldots,U_0}(U_1-U_0,\ldots,U_1-U_0).	
		\end{align}
		
		Let $\{A_m\}_{m\in \Nat}\subset\mathcal{B}_1(\hil)$ be a sequence of  self-adjoint operators converging to $A$ in the norm $\|\cdot\|_n$ and such that
		$$\sup_{m\in\Nat}\|A_m\|\le \|A\|.$$
		Define
		$$U_{0m}=e^{iA_m}U_0.$$ It follows from Corollary \ref{useofcyclicity} and  the equations \eqref{53} and \eqref{breakintwoparts} that there exist measures $\mu_{n,m}$, $ m\in\Nat$, satisfying
		\begin{align*}
			\|\mu_{n,m}\|\le c_n\|U_1-U_0\|_n^n
		\end{align*}
		and
		\begin{align}\label{hgorder}
			\Tr\left(\mathcal{R}_{n}^{\lin}(U_{0m},U_0,f)\right)=\int_{\cir} f^{(n-1)}(z)\,d\mu_{n,m}.
		\end{align}
		Integrating \eqref{hgorder} by parts yields the existence of a sequence $\{\eta_{n,m}\}_{m\in\Nat}\subset L^1(\mathbb{T})$ such that
		\begin{align}\label{allorder}
			\Tr\left(\mathcal{R}_{n}^{\lin}(U_{0m},U_0,f)\right)=\int_{\cir} f^{(n)}(z)\eta_{n,m}(z)\,dz.
		\end{align}
By Lemma \ref{explip0},
		\begin{align}
			\label{explip}
			\|e^{i A_m}-e^{iA}\|_n
			\leq e^{\|A\|}\,\|A-A_m\|_n.
		\end{align}
		It follows from \eqref{explip} that $U_{0m}$ converges to $U_1$ in $\mathcal{B}_n(\hil)$. Hence, $$V_m:=U_{0m}-U_0$$ forms a Cauchy sequence in $\mathcal{B}_n(\hil)$. In particular, there exists $M>0$ such that
		$$\sup_{m\in\Nat}\|V_m\|_n\leq M.$$
		
		\noindent By \eqref{53} and telescoping,
		\begin{align*}
			&\mathcal{R}_{n}^{\lin}(U_{0m},U_0,f)
			-\mathcal{R}_{n}^{\lin}(U_{0p},U_0,f)\\
			&= T_{f^{[n-1]}}^{U_{0},U_{0m},U_0,\ldots,U_0}(V_m,\ldots,V_m)-T_{f^{[n-1]}}^{ U_{0},U_{0p},U_0,\ldots,U_0}(V_p,\ldots,V_p)\\
			&\hspace*{1.5in}-\left(T_{f^{[n-1]}}^{U_0,\ldots,U_0}(V_m,\ldots,V_m)-T_{f^{[n-1]}}^{U_0,\ldots,U_0}(V_p,\ldots,V_p)\right)\\
			&=T_{f^{[n-1]}}^{U_0,U_{0m},U_{0p},U_0\ldots,U_0}(V_m,U_{0m}-U_{0p},\ldots,V_m)\\
			&\hspace*{1.5in}+\sum_{k=1}^{n-1}\Big[T_{f^{[n-1]}}^{U_0,U_{0p},U_0\ldots,U_0}(V_p,\ldots,V_p,\underbrace{V_m-V_p}_{k-\text{th entry}},V_m,\ldots,V_m)\\
			&\hspace*{2.5in}-T_{f^{[n-1]}}^{U_0,U_0,U_0\ldots,U_0}(V_p,\ldots,V_p,\underbrace{V_m-V_p}_{k-\text{th entry}},V_m,\ldots,V_m)\Big]\\
			&=T_{f^{[n]}}^{U_0,U_{0m},U_{0p},U_0\ldots,U_0}(V_m,U_{0m}-U_{0p},\ldots,V_m)+T_{f^{[n]}}^{U_0,U_{0p},U_0\ldots,U_0}(V_m-V_p,U_{0p}-U_0,V_m,\ldots,V_m)\\
			&\hspace{1.5in}+\sum_{k=3}^{n}T_{f^{[n]}}^{U_0,U_{0p},U_0\ldots,U_0}(V_p,U_{0p}-U_0,V_p\ldots,V_p,\underbrace{V_m-V_p}_{k-\text{th entry}},V_m,\ldots,V_m).
		\end{align*}
		The latter along with Corollary \ref{useofcyclicity} implies
		\begin{align*}			
			&\left|\Tr\left(\mathcal{R}_{n}^{\lin}(U_{0m},U_0,f)\right)
			-\Tr\left(\mathcal{R}_{n}^{\lin}(U_{0p},U_0,f)\right)\right|\\
			&\leq c_n M^{n-1}\|f^{(n)}\|_\infty\max\big\{\|U_{0m}-U_{0p}\|_n, \|U_0-U_{0p}\|_n,\|V_m-V_p\|_n\big\},
		\end{align*}
		which further implies
		\begin{align}
			\label{finalconv}\nonumber&\sup_{f\in\Fcal_n(\cir):\;\|f^{(n)}\|_\infty\leq 1}\left|\Tr\left(\mathcal{R}_{n}^{\lin}(U_{0m},U_0,f)\right)
			-\Tr\left(\mathcal{R}_{n}^{\lin}(U_{0p},U_0,f)\right)\right|\\
			&\leq c_n M^{n-1}\max\big\{\|U_{0m}-U_{0p}\|_n, \|U_0-U_{0p}\|_n,\|V_m-V_p\|_n\big\}\longrightarrow 0 \text{ as } m,p\to \infty.
		\end{align}
		
		Applying \cite[Lemma 4.3]{Skadv2017} along with \eqref{allorder} and \eqref{finalconv} yields
		\begin{align*}
			\|[\eta_{n,m}]-[\eta_{n,p}]\|_{L^1(\cir)/\mathcal{P}_n}\longrightarrow 0 \text{ as } m,p\to \infty.
		\end{align*}
		Hence, there exists an $L^1(\mathbb{T})$-function $\eta_n$, unique up to an additive polynomial of degree $n-1$, satisfying \eqref{trformulauni1}. 
		The estimate \eqref{shiftest} follows from \eqref{measureest} and \eqref{lintracfor}. 
	\end{proof}
	
	We note that the trace formula \eqref{trformulauni1} does not translate from  unitaries to contractions via the Sch\"affer matrix dilation because the block matrix representation of $T_{f^{[k]}}^{U_{T_0},\ldots,U_{T_0}}(U_{T_1}-U_{T_0},\ldots,U_{T_1}-U_{T_0})$ contains both upper and lower triangular matrices.
	
	\bigskip
	
	Below we apply Theorem \ref{linunitarytraceformula} to derive trace formulas for resolvent comparable self-adjoint operators.	
	
	\begin{Not}\label{uniselfnot}
		Let $n \in \Nat$, $n \geq 2$. Let $H_0$ be a closed, densely defined self-adjoint operator in $\hil$ and $V$ a self-adjoint operator in $\mathcal{B}(\hil)$. Denote $H_1 := H_0 + V$. Define the unitary operators via the Cayley transforms of $H_0$ and $H_1$:
		\[U_0 := (H_0 + iI)(H_0 - iI)^{-1}\, \text{ and }\; U_1 := (H_1 + iI)(H_1 - iI)^{-1}.\]
	\end{Not}
	
	Observe that
	\begin{align*}
		U_1 - U_0 &= -2i(H_1 - iI)^{-1}V(H_0 - iI)^{-1},\\
		U_1U_0^* &= I - 2i(H_1 - iI)^{-1}V(H_0 + iI)^{-1}.
	\end{align*}
	If $(H_1 - iI)^{-1} - (H_0 - iI)^{-1}\in\bnh$, then $U_1-U_0,A\in\bnh$, where $A = A^*$ is such that its spectrum is contained in $(-\pi, \pi]$ and $U_1 U_0^* = e^{iA}$ (see the proof of Theorem \ref{linunitarytraceformula}).
	
	\begin{thm}\label{Trace formula: self-adjoint linear path}
		Assume Notations \ref{uniselfnot} and assume that \[(H_1-iI)^{-1}-(H_0-iI)^{-1}\in\bnh.\]
		Then, there exist $c_n>0$ and $\gamma_n(\lambda):=(\lambda-i)^{-2}\,\eta_{n}\left(\frac{\lambda+i}{\lambda-i}\right)$, with $\eta_n$ as in Theorem \ref{linunitarytraceformula}, satisfying
		\[\|\gamma_n\|_1\leq c_n \|(H_1-iI)^{-1}-(H_0-iI)^{-1}\|_n^n\] and
		\begin{align}\label{simpleresolventtraceformula}
			\nonumber&\Tr\left(\psi(H_1)-\psi(H_0)-\sum_{k=1}^{n-1}\sum_{1\leq j_1<\cdots<j_k\leq n-1} T_{\psi^{[k]}}^{H_0,H_0,\ldots,H_0}(V_{j_1},\ldots,V_{j_k})\right)\\
			&=\dfrac{i^{n-1}}{2^{n-1}}\int_\R (\lambda-i)^ n\,\dfrac{d^{n-1}}{d\lambda^{n-1}}\left((\lambda-i)^n\psi'(\lambda)\right)\gamma_n(\lambda)\,d\lambda
		\end{align}
		for every $\psi\in\FptR{n}$,
		where
		\begin{align*}
			V_{j_l}=\big((I-V(H_1-iI))^{-1}V(H_0-iI)^{-1}\big)^{j_l-j_{l-1}}(I-V(H_1-iI))^{-1}V
		\end{align*}
		and $j_0:=0$.
	\end{thm}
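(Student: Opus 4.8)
The plan is to transport the trace formula of Theorem~\ref{linunitarytraceformula}, applied to the pair of Cayley transforms $U_0=\phi(H_0)$ and $U_1=\phi(H_1)$ with $\phi(\lambda):=\frac{\lambda+i}{\lambda-i}$, back to $H_0,H_1$ via the substitution $z=\phi(\lambda)$. By the identities recorded just after Notations~\ref{uniselfnot}, $U_0$ and $U_1$ are unitary and $U_1-U_0=-2i(H_1-iI)^{-1}V(H_0-iI)^{-1}\in\bnh$; moreover the resolvent identity gives $U_1-U_0=2i\big[(H_1-iI)^{-1}-(H_0-iI)^{-1}\big]$, so $\|U_1-U_0\|_n=2\,\|(H_1-iI)^{-1}-(H_0-iI)^{-1}\|_n$. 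Fix $f\in\Fpt{n}$ with $\psi=f\circ\phi\in\FptR{n}$; since $\Fpt{n}\subset\Fcal_{n-1}(\mathbb{T})$ by Proposition~\ref{new_class}\eqref{new_class_ii}, the Fourier series of $f$ converges absolutely and $\psi(H_j)=f(U_j)$ for $j=0,1$.

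Next I would rewrite the multilinear terms on the left-hand side of \eqref{simpleresolventtraceformula} in the unitary picture. The key tool is the change-of-variables (divided-difference chain rule) for multiple operator integrals under the Cayley transform $\phi$, carried out as in the proofs of \cite[Theorem~2.6 and Theorem~3.5]{PoSkSu15}: writing $f^{[k]}(\phi(\lambda_0),\dots,\phi(\lambda_k))$ in terms of $\psi^{[1]},\dots,\psi^{[k]}$ and divided differences of $\phi$, substituting $U_1-U_0=-2i(H_1-iI)^{-1}V(H_0-iI)^{-1}$ together with the elementary identity $(I-V(H_1-iI)^{-1})^{-1}=(H_1-iI)(H_0-iI)^{-1}$, and collecting the terms over $k=1,\dots,n-1$, one obtains
\[\sum_{k=1}^{n-1}T_{f^{[k]}}^{U_0,\dots,U_0}(U_1-U_0,\dots,U_1-U_0)=\sum_{k=1}^{n-1}\sum_{1\le j_1<\cdots<j_k\le n-1}T_{\psi^{[k]}}^{H_0,\dots,H_0}(V_{j_1},\dots,V_{j_k}),\]
with $V_{j_l}$ as in the statement; here each $T_{\psi^{[k]}}^{H_0,\dots,H_0}$ is understood through the spectral theorem of $H_0$, equivalently through $\phi(H_0)=U_0$, using the integral representation of $\psi^{[k]}$ inherited from those of $f^{[1]},\dots,f^{[k]}$. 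Combined with $\psi(H_j)=f(U_j)$, this identifies the operator inside $\Tr$ on the left of \eqref{simpleresolventtraceformula} with $\mathcal{R}_{n}^{\lin}(U_1,U_0,f)$, which lies in $\boh$ by Theorem~\ref{linunitarytraceformula}.

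It then remains to convert the right-hand side. By Theorem~\ref{linunitarytraceformula} there is $\eta_n\in L^1(\cir,\dm)$, unique up to a polynomial of degree at most $n-1$, with $\|\eta_n\|_1\le c_n\|U_1-U_0\|_n^n$ and $\Tr\big(\mathcal{R}_{n}^{\lin}(U_1,U_0,f)\big)=\int_{\cir}f^{(n)}(z)\eta_n(z)\,dz$. Substituting $z=\phi(\lambda)$, with $\phi'(\lambda)=-2i(\lambda-i)^{-2}$ and the Cayley-transform identity
\[f^{(n)}(\phi(\lambda))=\frac{i^{n}}{2^{n}}\,(\lambda-i)^n\,\frac{d^{n-1}}{d\lambda^{n-1}}\Big((\lambda-i)^n\psi'(\lambda)\Big),\]
which follows by induction from $f'(\phi(\lambda))=\tfrac{i}{2}(\lambda-i)^2\psi'(\lambda)$ (cf. the right-hand sides of \cite[Theorem~5.3]{Skadv2017} and \cite[Theorem~3.5]{PoSkSu15}), and setting $\gamma_n(\lambda):=(\lambda-i)^{-2}\eta_n(\phi(\lambda))$, a direct computation yields
\[\int_{\cir}f^{(n)}(z)\eta_n(z)\,dz=\frac{i^{n-1}}{2^{n-1}}\int_{\R}(\lambda-i)^n\,\frac{d^{n-1}}{d\lambda^{n-1}}\big((\lambda-i)^n\psi'(\lambda)\big)\,\gamma_n(\lambda)\,d\lambda,\]
which is \eqref{simpleresolventtraceformula}. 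The same substitution gives $\|\gamma_n\|_1=\pi\,\|\eta_n\|_{L^1(\cir,\dm)}\le\pi\,2^n c_n\,\|(H_1-iI)^{-1}-(H_0-iI)^{-1}\|_n^n$, which is the claimed estimate after renaming the constant.

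The main obstacle is the step of the second paragraph: carrying out the multiple-operator-integral change of variables precisely enough to recover exactly the family $\{V_{j_l}\}$ indexed by $1\le j_1<\cdots<j_k\le n-1$, and, since $H_0$ may be unbounded, justifying that each $T_{\psi^{[k]}}^{H_0,\dots,H_0}$ is well-defined and that the rearrangements of the (absolutely convergent) series and operator integrals are legitimate; all of this is handled by transferring the computation to the bounded unitary $U_0=\phi(H_0)$. The remaining ingredients are routine once Theorem~\ref{linunitarytraceformula} is in hand.
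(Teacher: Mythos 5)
Your proposal is correct and follows essentially the same route as the paper: apply Theorem \ref{linunitarytraceformula} to the Cayley transforms $U_0,U_1$, rewrite the left-hand side via the known substitution identity for the linear-path remainder (the paper cites \cite[Theorem 5.2]{Skadv2017} rather than \cite{PoSkSu15}, but this is the same device), and change variables on the right-hand side as in \cite[Theorem 3.5]{PoSkSujfa2016}. Your explicit verifications of $U_1-U_0=2i\bigl[(H_1-iI)^{-1}-(H_0-iI)^{-1}\bigr]$, the derivative identity $f^{(n)}(\phi(\lambda))=\frac{i^{n}}{2^{n}}(\lambda-i)^n\frac{d^{n-1}}{d\lambda^{n-1}}\bigl((\lambda-i)^n\psi'(\lambda)\bigr)$, and the $L^1$-bound on $\gamma_n$ are all accurate.
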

	
	\begin{proof}
		The result follows upon applying the substitution of \cite[Theorem 5.2]{Skadv2017} on the left-hand side of \eqref{trformulauni1} and changing the variable on the right-hand side of \eqref{trformulauni1} as outlined in the proof of \cite[Theorem 3.5]{PoSkSu15}.
	\end{proof}
	
	\smallskip
	
	\noindent\textit{Acknowledgment}: The authors thank the anonymous referee for several suggestions that helped to improve the exposition of the paper. A. Chattopadhyay is supported by the Core Research Grant (CRG), File No: CRG/2023/004826, of SERB. A. Skripka is supported in part by Simons Foundation Grant MP-TSM-00002648. C. Pradhan acknowledges support from the IoE post-doctoral fellowship at IISc Bangaloreas, as well as the NBHM post-doctoral fellowship (File No. 0204/27/(9)/2023/R\&D-II/11882).

	\vspace{.1in}

\end{document}